\begin{document}
\newtheorem{definition}{Definition}[section]
\newtheorem{example}[definition]{Example}
\newtheorem{algorithm}[definition]{Algorithm}
\newtheorem{theorem}[definition]{Theorem}

\newtheorem{axiom}[definition]{Axiom}
\newtheorem{prop}[definition]{Property}
\newtheorem{proposition}[definition]{Proposition}
\newtheorem{lemma}[definition]{Lemma}
\newtheorem{corollary}[definition]{Corollary}
\newtheorem{remark}[definition]{Remark}
\newtheorem{condition}[definition]{Condition}
\newtheorem{conclusion}[definition]{Conclusion}
\newtheorem{assumption}[definition]{Assumption}
\newtheorem{problem}[definition]{Problem}

\makeatletter
\@addtoreset{equation}{section}
\makeatother
\renewcommand{\theequation}{\arabic{section}.\arabic{equation}}

\begin{frontmatter}

\title{Two fixed point theorems in complete random normed modules and their applications to backward stochastic equations}

\author[mymainaddress]{Tiexin Guo\fnref{myfootnote} \corref{mycorrespondingauthor}}
\cortext[mycorrespondingauthor]{Corresponding author}
\ead{tiexinguo@csu.edu.cn}

\fntext[myfootnote]{This work was supported by National Natural Science Foundation of China (Grant No. 11571369).}

\author[mymainaddress]{Erxin Zhang\fnref{myfootnote}}
\ead{zhangerxin6666@163.com}

\author[mymainaddress]{Yachao Wang\fnref{myfootnote}}
\ead{wychao@csu.edu.cn}

\author[mysecondaryaddress]{Zichen Guo}
\ead{4403116@gmail.com}

\address[mymainaddress]{School of Mathematics and Statistics, Central South University, ChangSha 410083, China}
\address[mysecondaryaddress]{Department of Mathematics and Statistics, Dalhousie University, {\rm6316} Coburg Road, \\ PO BOX {\rm15000}, Halifax, Nova Scotia, Canada B3H 4R2.}

\begin{abstract}
 Let $(\Omega,\mathcal{F},\mathbb{F},P)$ be a filtered probability space with a filtration $\mathbb{F} = (\mathcal{F}_t)_{t \in [0,T]}$ satisfying the usual conditions and $T$ a finite time, $L^0(\mathcal{F}_0)$ the algebra of equivalence classes of $\mathcal{F}_0$--measurable real--valued random variables on $\Omega$, $L^p(\mathcal{F}_T, R^d)$ the Banach space of equivalence classes of $p$--integrable (for $1 \leq p< +\infty$) or essentially bounded (for $p= +\infty$) $\mathcal{F}_T$--measurable $R^d$--valued functions on $\Omega$ and $L^p_{\mathcal{F}_0}(\mathcal{F}_T, R^d)$ the $L^0(\mathcal{F}_0)$--module generated by $L^p(\mathcal{F}_T, R^d)$, namely $L^p_{\mathcal{F}_0}(\mathcal{F}_T, R^d) \\
  = \{ \xi \cdot x : \xi \in L^0(\mathcal{F}_0)$ and $x \in L^p(\mathcal{F}_T, R^d) \}$. The usual backward stochastic equations $(BSEs)$ are studied for their terminal conditions $\xi$ that are required to belong to $L^p(\mathcal{F}_T, R^d)$. Motivated by the study of dynamic risk measures, this paper studies $BSEs$ with their terminal conditions $\xi$ in $L^p_{\mathcal{F}_0}(\mathcal{F}_T, R^d)$. To look for the spaces where solutions of the more general class of $BSEs$ lie, we construct various kinds of complete random normed modules ($RN$ modules) consisting of $RCLL$ (right continuous with left limit) adapted processes by means of the generalized conditional mathematical expectation. Since $L^p_{\mathcal{F}_0}(\mathcal{F}_T, R^d)$ is a complete $RN$ module, this paper first proves two fixed point theorems in complete random normed modules, which are respectively the random generalizations of the classical Banach's contraction mapping principle and Browder--Kirk's fixed point theorem. As applications, the first is used to give the existence and uniqueness of solutions to various kinds of backward stochastic equations under $L^0$--Lipschitz assumptions and the second is used to establish the existence of solutions to backward stochastic equations of nonexpansive type. Finally, since $L^p_{\mathcal{F}_0}(\mathcal{F}_T, R^d)$ can be represented as the countable concatenation hull of $L^p(\mathcal{F}_T, R^d)$, this paper concludes with a brief discussion of the relation between the solutions of $BSEs$ with a terminal condition $\xi$ in $L^p(\mathcal{F}_T, R^d)$ and $L^p_{\mathcal{F}_0}(\mathcal{F}_T, R^d)$.
\end{abstract}

\begin{keyword}
 complete random normed modules \sep fixed point theorems\sep backward stochastic equations\sep backward stochastic differential equations
\MSC[2010] 46A16\sep 46A99\sep 60H10\sep 47H10
\end{keyword}

\end{frontmatter}

\section{Introduction}
The study of backward stochastic equations $(BSEs)$ is a fundamental topic in stochastic analysis, and in particular the interest in $BSEs$ also comes from their connections with different mathematical fields, such as mathematical finance, partial differential equations and stochastic control, see, e.g., \cite{EH97,BK00,Peng04,BLP09,CE02,CN17,DI10a,DI10b,Kob00,LLQ11,PP90,P99,PY09,SG18,TL94,MT03}.
\par
Throughout this paper, whenever we discuss $BSEs$ we always let $(\Omega,\mathcal{F},\mathbb{F},P)$ be a filtered probability space with a filtration $\mathbb{F} = (\mathcal{F}_t)_{t \in [0,T]}$ satisfying the usual conditions and $T$ a finite time, $L^0(\mathcal{F}_0)$ the algebra of equivalence classes of $\mathcal{F}_0$--measurable real--valued random variables on $\Omega$, $L^p(\mathcal{F}_T, R^d)$ the Banach space of equivalence classes of $p$--integrable (for $1 \leq p< +\infty$) or essentially bounded (for $p= +\infty$) $\mathcal{F}_T$--measurable $R^d$--valued functions on $\Omega$ and $L^p_{\mathcal{F}_0}(\mathcal{F}_T, \\
 R^d)$ the $L^0(\mathcal{F}_0)$--module generated by $L^p(\mathcal{F}_T, R^d)$, namely $L^p_{\mathcal{F}_0}(\mathcal{F}_T, R^d) = \{ \xi \cdot x : \xi \in L^0(\mathcal{F}_0)$ and $x \in L^p(\mathcal{F}_T, R^d) \}$. The usual backward stochastic equations $(BSEs)$ are studied for their terminal conditions $\xi$ that are required to belong to $L^p(\mathcal{F}_T, R^d)$. Motivated by the study of dynamic risk measures, this paper studies $BSEs$ with their terminal conditions $\xi$ in $L^p_{\mathcal{F}_0}(\mathcal{F}_T, R^d)$.
\par
It is well known that under some proper conditions the following classical $BSDE$:\\
\begin{equation}\nonumber
  \left\{
            \begin{array}{ll}
              -dY_t = g(t,Z_t)dt - Z_tdW_t \\

              Y_T = \xi
            \end{array}
          \right.
\end{equation}
has a unique solution $(Y,Z)$ for any given terminal condition $\xi \in L^2(\mathcal{F}_T)$, denote $Y_t$ by $\mathcal{E}^g[\xi ~|~ \mathcal{F}_t]$. Under the further additional conditions, the conditional $g$--expectation $\mathcal{E}^g[\cdot ~|~ \mathcal{F}_t]$ can induce a family of conditional convex risk measures $\rho^g_t : L^2(\mathcal{F}_T) \to L^2(\mathcal{F}_t)$ given by $\rho^g_t(x) = \mathcal{E}^g[-x~|~\mathcal{F}_t]$ for any $x \in L^2(\mathcal{F}_T)$, in particular $\{ \rho^g_t \}_{t \in [0 ,T]}$ forms a time--consistent dynamic risk measure, see \cite{PP90,GZZ14,Peng04} and the references therein for the related details, where Guo, et.al pointed out that each $\rho^g_t$ can extend to a conditional convex risk measure $\bar{\rho}^g_t : L^2_{\mathcal{F}_t}(\mathcal{F}_T) \to L^0(\mathcal{F}_t)$. It is obvious that the extension is essentially equivalent to expanding a terminal condition in $L^2(\mathcal{F}_T)$ to one in $L^2_{\mathcal{F}_t}(\mathcal{F}_T)$, we naturally hope to know the essence buried behind the expanding of terminal conditions. We conjuncture that this extension $\bar{\rho}^g_t$ can be also directly obtained from solving the above--stated $BSDE$ with the terminal condition $\xi$ in $L^2_{\mathcal{F}_t}(\mathcal{F}_T)$. For this, we consider the $BSEs$ of the form (\ref{1.3}) below.
\par
To look for the tool for the study of the more general class of $BSEs$, let us briefly introduce the recent enlightening work from \cite{CN17}. By $|\cdot|$, we denote the Euclidean norm on $R^d$, and for a $d$--dimensional $\mathcal{F}$--measurable random vector $X$ , we define $\|X\|_p := (E|X|^p)^{1/p}$ if $0<p<+\infty$ and $\|X\|_{\infty} := ess \sup_{\omega \in \Omega} |X(\omega)|$.
\par
For $p \in (1,+\infty]$, we set:
\par
$L^p(\mathcal{F}_t)^d$ : the Banach space of equivalence classes of $d$--dimensional $\mathcal{F}_t$--measurable random vectors $X$ satisfying $\|X\|_p < +\infty$, equipped with the norm $\|\cdot\|_p$.
\par
$E_t(X) := E[X~|~\mathcal{F}_t]$ denotes the conditional mathematical expectation of $X$ with respect to $\mathcal{F}_t$ for $X \in L^1(\mathcal{F})^d$, where $L^1(\mathcal{F})^d$ is the Banach space of equivalence classes of $d$--dimensional integrable $\mathcal{F}$--measurable random vectors on $(\Omega,\mathcal{F},P)$.
\par
$S^p :$ the Banach space of equivalence classes of $R^d$--valued $RCLL$ (right continuous with left limit) adapted processes $(Y_t)_{t \in [0,T]}$ satisfying $\|Y\|_p := \|\sup_{0 \leq t \leq T} |Y_t| \|_p < +\infty$, equipped with the norm $\|\cdot\|_p$, $S^p_0 := \{ Y \in S^p : Y_0 =0 \}$.
\par
$M^p := \{ Y \in S^p : Y$ is a martingale $\}$, $M^p_0 := \{ Y \in M^p : Y_0 =0 \}$.
\par
As usual, two random processes $X$ and $Y$ are equivalent iff they are stochastically equivalent, namely $X_t = Y_t$ a.s., $\forall t \in [0,T]$.
\par
Recently, in \cite{CN17} Cheridito and Nam studied backward stochastic equations $(BSEs)$ of the following form
\begin{equation}\label{1.1}
  Y_t + F_t(Y,M) + M_t = \xi + F_T(Y,M) + M_T, \forall t \in [0,T].
\end{equation}
Here, $F : S^p \times M^p_0 \to S^p_0$ is the generator and $\xi \in L^p(\mathcal{F}_T)^d$ is the terminal condition. When $F_t(Y,M) = \int^t_0 f(s,Y,M)ds, \forall t \in [0,T]$, for some driver $f$, (\ref{1.1}) includes various kinds of back stochastic differential equations $(BSDEs)$, see \cite{CN17} and its references.
\par
$F$ is said to satisfy condition $(S)$ if for all $y \in L^p(\mathcal{F}_0)^d$ and $M \in M^p_0$, the stochastic equation:
\begin{equation}\label{1.2}
  Y_t = y-F_t(Y,M) - M_t, \forall t \in [0,T].
\end{equation}
has a unique solution $Y \in S^p$.
\par
Let $F$ satisfy condition $(S)$. For a given $V \in L^p(\mathcal{F}_T)^d$, then $y^V = E_0(V) \in L^p(\mathcal{F}_0)^d$ and $M^V_t := E_0(V) - E_t(V)$ is in $M^p_0$, we denote by $Y^V$ the solution of (\ref{1.2}). $(F,\xi)$ defines a mapping $G : L^p(\mathcal{F}_T)^d \to L^p(\mathcal{F}_T)^d$ by $G(V) = \xi + F_T(Y^V,M^V)$ for any $V \in L^p(\mathcal{F}_T)^d$. Denote by $FPS(G)$ the set of fixed points of $G$ and by $SOL(F,\xi)$ the set of solutions of (\ref{1.1}), then the main and excellent idea of \cite{CN17} is to prove that $\varphi : L^p(\mathcal{F}_T)^d \to S^p \times M^p_0$ defined by $\varphi (V) := (Y^V,M^V)$, is a bijection from $FPS(G)$ onto $SOL(F,\xi)$ when $\varphi$ is limited to $FPS(G)$, at this time whose inverse is $\pi : SOL(F,\xi) \to FPS(G)$ given by $\pi (Y,M) = Y_0 - M_T$ for any $(Y,M) \in SOL(F,\xi)$. In other words, Cheridito and Nam \cite{CN17} shows that a problem of solving $BSE$(\ref{1.1}) is equivalent to one of studying the fixed points of $G$. Thus the known famous fixed point theorems in functional analysis can be used to study the $BSE$(\ref{1.1}). For example, in \cite{CN17} Banach's contraction mapping principle is used for the uniqueness and existence and Krasnoselskii's fixed point theorem \cite{Kra64} for the existence of solutions of the $BSE$(\ref{1.1}). Since Krasnoselskii's fixed point theorem is a combination of Banach's and Schauder's fixed point theorem, its applications requires compactness as the premise.
\par
In this paper we will also emphasize applications of the famous Browder--Kirk's fixed point theorem \cite{B65,GR84,G65,K65,K83}: let $(B,\|\cdot\|)$ be a Banach space, $D$ a weakly compact convex subset with normal structure and $T : D \to D$ a nonexpansive mapping, then $T$ has a fixed point. Let us recall : a closed convex subset $H$ of $(B,\|\cdot\|)$ is said to have normal structure if for each closed and bounded convex subset $C$ of $H$ with $D(C) := \sup \{ \|x-y\| : x,y \in C \} >0$ there is always  a point $x$ in $C$ such that $\sup \{ \|x-y\| : y \in C \} < D(C)$. This emphasis comes from an observation, see Remark3.2 of \cite{CN17} or Example3.1 of \cite{DI10a} : let $F_t(Y,M) = atY_0$ for a constant $a$, then when $aT =1$ and $E_0(\xi) = 0$ the $BSE$ : $Y_t - a(T-t)Y_0 = \xi + M_T -M_t$ has infinitely many solutions $(Y,M) : Y_t = (1- t/T)Y_0 + E_t(\xi)$ and $M_t = - E_t(\xi)$ for the different choices of $\xi$ and $Y_0$. This example was presented in \cite{CN17} as a counterexample for the existence and uniqueness for $BSEs$ with path--dependent coefficients without the assumption that the Lipschitz constant in question is sufficiently small and in \cite{DI10a} as a counterexample showing that time--delayed $BSDEs$ with Lipschitz coefficients are not always well--posed. However, Browder--Kirk's fixed point theorem can give another natural explanation for the counterexample : $G : L^p(\mathcal{F}_T)^d \to L^p(\mathcal{F}_T)^d$ defined by $G(V) = \xi + F_T(Y^V,M^V) = \xi + aTY_0^V = \xi + E_0(V)$ for any $V \in L^p(\mathcal{F}_T)^d$, is clearly a nonexpansive mapping, which maps every closed ball $H := B_r(\xi)$ with center $\xi$ and radius $r$ into itself, and hence when $1<p<+\infty,  G$ and $B_r(\xi)$ satisfy Browder--Kirk's fixed point theorem by noticing that $L^p(\mathcal{F}_T)^d$ is a uniformly convex Banach space so that $B_r(\xi)$ is a weakly compact convex subset with normal structure. Clearly, $\xi + Y_0$ is always a fixed point of $G$ for any $Y_0 \in L^p(\mathcal{F}_0)^d$, and $\{ \xi + Y_0~|~Y_0 \in L^p(\mathcal{F}_0)^d \}$ correspond in one to one way to the infinitely many solutions mentioned formerly. In fact, the nice property that $L^p(\mathcal{F}_T)^d$ are always uniformly convex when $1<p<+\infty$, which ensures that every bounded closed convex subset of $L^p(\mathcal{F}_T)^d$ is a weakly compact set with normal structure, brings much convenience to applications of Browder--Kirk's fixed point theorem to the study of $BSDEs$, not as Schauder's or Krasnoselskii's fixed point theorem always requires bounded closed convex sets in question to be compact. This paper provides more applications of Browder--Kirk's fixed point theorem to $BSDEs$ in Section\ref{section6} of this paper.
\par
To look for the spaces where the solutions of $BSEs$ (\ref{1.3}) below lie, let us first recall the notion of a generalized conditional mathematical expectation : for a given probability space $(\Omega,\mathcal{F},P)$ and a sub $\sigma$--algebra $\mathcal{E}$ of $\mathcal{F}$, a real--valued $\mathcal{F}$--measurable random variable $X$ is said to be $\sigma$--integrable or conditionally integrable with respect to $\mathcal{E}$ if there is an nondecreasing sequence $\{ A_n, n \in N \}$ in $\mathcal{E}$ with $\cup_{n \geq1}A_n = \Omega$ such that $E(|X|I_{A_n}) := \int_{A_n} |X|dp < +\infty$ for any $n \in N$, at which time there is an almost surely $($a.s.$)$ unique real--valued $\mathcal{E}$--measurable random variable $Y$ such that $E(XI_{A}) = E(YI_A)$ for each $A \in \mathcal{E}$ with $E(|X|I_A) < +\infty$. $Y$ is called the generalized conditional expectation of $X$ with respect to $\mathcal{E}$, denoted by $E[X~|~\mathcal{E}]$, where $I_A$ stands for the characteristic function on $A$ and $N$ for the set of positive integers. For a random vector $X$, $E[X~|~\mathcal{E}]$ can be understood in a componentwise way.
\par
From now on, $E[\cdot~|~\mathcal{E}]$ always denotes the kind of generalized expectation unless otherwise stated. It is well known, see, e.g., \cite{HWY92}, that $X$ is conditionally integrable with respect to $\mathcal{E}$ if and only if there is an a.s. positive $\mathcal{E}$--measurable random variable $\eta$ such that $X\eta$ is integrable. An $\mathbb{F}$--adapted random process $(X_t)_{t \in [0,T]}$ is called a generalized martingale if for any $0 \leq s <t \leq T,  X_t$ is conditionally integrable with respect to $\mathcal{F}_s$ and $E[X_t~|~\mathcal{F}_s] = X_s (a.s.)$.
\par
Let $S$ be the set of equivalence classes of $R^d$--valued $RCLL$ $\mathbb{F}$--adapted processes, then for $1 \leq p<+\infty$, let $S^p_{\mathcal{F}_0} := \{ Y = (Y_t)_{t \in [0,T]} \in S~|~(\sup_{t \in [0,T]} |Y_t|)^p$ is conditionally integrable with respect to $\mathcal{F}_0 \}$, \\ $S^p_{\mathcal{F}_0,0} := \{ Y \in S^p_{\mathcal{F}_0}~|~Y_0 = 0 \}$, \\
$M:= \{ Y \in S ~|~ Y$ is a generalized martingale \}, \\
$M^p_{\mathcal{F}_0} := M \cap S^p_{\mathcal{F}_0}$,  \\
$M^p_{\mathcal{F}_0,0} := M \cap S^p_{\mathcal{F}_0,0}$.
\par
As usual, for any given sub $\sigma$--algebra $\mathcal{E}$ of $\mathcal{F}$, let $L^0(\mathcal{E},R^d)$ be the linear space of equivalence classes of $R^d$--valued $\mathcal{E}$--measurable random vectors and $L^0(\mathcal{E}) := L^0(\mathcal{E},R^1)$. Then $L^0(\mathcal{E})$ is an algebra over the real number field $R^1$ and $L^0(\mathcal{E},R^d)$ is a free $L^0(\mathcal{E})$--module of rank $d$. Further, for a filtration $\mathbb{F} := (\mathcal{F}_t)_{t \in [0,T]}$ as above and $1 \leq p<+\infty$, it is easy to see that $L^p_{\mathcal{F}_0}(\mathcal{F}_T,R^d) = \{ V \in L^0(\mathcal{F}_T, R^d)~|~ |V|^p$ is conditionally integrable with respect to $\mathcal{F}_0 \}$, which is also called the conditional $L^p$--space under the conditional $p$--norm $|||V|||_p := (E[|V|^p~|~\mathcal{F}_0])^{1/p}$ in the literature of mathematical finance, see, e.g., \cite{HR87,FKV09}. In fact, $L^p_{\mathcal{F}_0}(\mathcal{F}_T,R^d)$ has served as a model space for dynamic asset pricing and risk measures \cite{HR87,GZZ14,FKV09}. It is easy to see that $(L^p_{\mathcal{F}_0}(\mathcal{F}_T,R^d),|||\cdot|||_p)$ are a special class of random normed modules (briefly, $RN$ modules), which were independently introduced by Guo in \cite{Guo89,Guo92,Guo93,YZG91} and by Haydon, Levy and Raynaud in \cite{HLR91} and deeply and systematically developed by Guo, et.al, in \cite{Guo95,Guo96a,Guo96b,Guo97, Guo99,Guo07,Guo08,Guo10,Guo13,GL05,GY96,GZ10}.
\par
Similarly, $S^p_{\mathcal{F}_0}$ is the $L^0(\mathcal{F}_0)$--module generated by the Banach space $S^p$, namely, $S^p_{\mathcal{F}_0} = L^0(\mathcal{F}_0) \cdot S^p = \{ \xi \cdot Y~|~ \xi \in L^0(\mathcal{F}_0)$ and $Y \in S^p \}$, where $(\xi \cdot Y)(\omega,t) = \xi(\omega)\cdot Y_t(\omega)$ for any $(\omega,t) \in \Omega \times [0,T]$. Further, $S^p_{\mathcal{F}_0}$ becomes an $RN$ module endowed with the $L^0$--norm $|||Y|||_p := (E[(\sup_{t \in [0,T]} |Y_t|^p)~|~\mathcal{F}_0])^{1/p}$ for any $Y := (Y_t)_{t \in [0,T]} \in S^p_{\mathcal{F}_0}$. Similarly, $M^p_{\mathcal{F}_0} = L^0(\mathcal{F}_0) \cdot M^p$ is also an $RN$ module.
\par
The central purpose of this paper is to consider the $BSEs$ of the form:
\begin{equation}\label{1.3}
  Y_t + F_t(Y,M) + M_t = \xi + F_T(Y,M) + M_T
\end{equation}
for the generator $F : S^p_{\mathcal{F}_0} \times M^p_{\mathcal{F}_0,0} \to S^p_{\mathcal{F}_0,0}$ and the terminal condition $\xi \in L^p_{\mathcal{F}_0}(\mathcal{F}_T,R^d)$.
\par
Similarly, $F$ is said to satisfy condition $(S)$ if for any $y \in L^0(\mathcal{F}_0,R^d)$ and any $M \in M^p_{\mathcal{F}_0,0}$ the stochastic equation $Y_t = y - F_t(Y,M) - M_t$ has a unique solution $Y \in S^p_{\mathcal{F}_0}$.
\par
Let $V$ belong to $L^p_{\mathcal{F}_0}(\mathcal{F}_T,R^d)$, we still use $E_t(V)$ for $E[V~|~\mathcal{F}_t]$ for each $t \in [0,T]$, then $y^V := E_0(V) \in L^0(\mathcal{F}_0,R^d)$, and let $M^V_t := E_0(V) - E_t(V)$ for any $t \in [0,T]$, then $M^V \in M^p_{\mathcal{F}_0,0}$ (see Section \ref{section4} of this paper), it is also easy to check that each element in $M^p_{\mathcal{F}_0}$ is a generalized martingale. if $F$ satisfies condition $(S)$, let $Y^V \in S^p_{\mathcal{F}_0}$ be the unique solution of the stochastic equation $Y_t = y^V - F_t(Y,M^V) - M^V_t$ for any $t \in [0,T]$, then $(F,\xi)$ defines the mapping $G : L^p_{\mathcal{F}_0}(\mathcal{F}_T,R^d) \to L^p_{\mathcal{F}_0}(\mathcal{F}_T,R^d)$ by $G(V) := \xi + F_T(Y^V,M^V)$. In the same way as \cite{CN17}, it is very easy to prove that the set of solutions of $BSE$ (\ref{1.3}) corresponds in one to one way to that of fixed points of $G$. It remains to establish the fixed point theorems in $RN$ modules, precisely speaking, it requires the generalization of the famous fixed point theorems such as Banach's contraction mapping principle, Browder--Kirk's fixed point theorem and Schauder's fixed point theorem from Banach spaces to complete $RN$ modules since $L^p_{\mathcal{F}_0}(\mathcal{F}_T.R^d)$ is a complete $RN$ module (under the usual $(\varepsilon,\lambda)$--topology) rather than a Banach space.
\par
It is very easy to generalize Banach's contraction mapping principle. The essential difficulty lies in the generalization of the latter two fixed point theorems since the two theorems both involve compactness. It is well known that a complete $RN$ module is a Fr\'{e}ch\'{e}t space (namely a complete metrizable linear topological space), on which there does not necessarily exist any nontrivial continuous linear functional in general. Historically, generalizing Schauder's fixed point theorem from Banach spaces to such Fr\'{e}ch\'{e}t spaces is itself a famous open problem, see \cite[Problem 54]{Mau81}. On the other hand, it was already proved in \cite{Guo08} that a class of important closed convex subsets--almost surely (a.s.) bounded closed $L^0$--convex subsets, which are frequently encountered in the theory and applications of $RN$ modules, are rarely compact, although the famous Brouwer fixed point theorem was recently generalized to a special class of $RN$ modules in a nice way, namely $(L^0)^d$--the space of $d$--dimensional random vectors, see \cite{DKKS13}, there is a long way to go if one wants to generalize a similar result to general complete $RN$ modules. It is very fortunate that in this paper we can generalize Browder--Kirk's fixed point theorem to general complete $RN$ modules.
\par
Generalizing Browder--Kirk's fixed point theorem forces us to solve the following problem: How to introduce the two notions of `` weak compactness " and `` normal structure " for a.s. bounded closed $L^0$--convex subsets of $RN$ modules in a proper way ? `` Normal structure " (namely, random normal structure in the sense of this paper) can be easily introduced in a way similar to the classical case, however, it is rather delicate for us to look for a proper substitute for `` weak compactness ". It does not make sense to speak of weak compactness for $RN$ modules since they are not locally convex in general under the $(\varepsilon,\lambda)$--topology. Motivated by \v{Z}itkovi\'{c}'s work \cite{Z10}, we recently presented the notion of $L^0$--convex compactness for an $L^0$--convex subset of a topological $L^0$--module in \cite{GZWW17} and further gave a characterization for a closed $L^0$--convex subset of a complete $RN$ module to be $L^0$--convexly compact, which can be regarded as a natural generalization of the classical James theorem \cite{J64} that characterizes weak compactness of a closed convex subset of a Banach space, so that we can find a proper substitute for weak compactness, namely $L^0$--convex compactness. Therefore, we can eventually prove Browder--Kirk's fixed point theorem in a complete $RN$ module $S$ as follows : let $V$ be a closed $L^0$--convex subset of $S$ such that $V$ is $L^0$--convexly compact and has random normal structure, then a nonexpansive mapping $T$ from $V$ to $V$ has a fixed point.
\par
The remainder of this paper is organized as follows: Section \ref{section2} of this paper provides some preliminaries for $RN$ modules and gives an interesting generalization of Banach's contraction mapping principle; Section \ref{section3} is devoted to introducing and studying the notions of $L^0$--convex compactness and random normal structure, at the same time we prove Browder--Kirk's fixed point theorem in complete $RN$ modules; similar to \cite{CN17}, Section \ref{section4} gives the relation between $BSEs$ of the form (\ref{1.3}) and fixed point problems in complete $RN$ modules; Section \ref{section5} is devoted to applications of the above--stated generalized Banach's contraction mapping principle to the existence and uniqueness of solutions of $BSEs$ of the form (\ref{1.3}), the part can be regarded as the conditional versions of the corresponding main results of \cite{CN17}; Section \ref{section6} is devoted to applications of the classical Browder--Kirk's fixed point theorem to the existence of solutions of $BSEs$ of the form (\ref{1.1}) and applications of the above--stated generalized Browder--Kirk's fixed point theorem to the existence of solutions of $BSEs$ of the form (\ref{1.3}), the part is completely new. Finally, since $L^p_{\mathcal{F}_0}(\mathcal{F}_T,R^d)$ can be represented as the countable concatenation hull of $L^p(\mathcal{F}_T,R^d)$, in Section \ref{section7} this paper concludes with a brief discussion of the relation between the solutions of $BSEs$ of the forms (\ref{1.1})  and (\ref{1.3}).


\section{Some preliminaries and an interesting generalization of Banach's contraction mapping principle }\label{section2}

Throughout this paper, unless otherwise stated, $K$ denotes the field $R$ of real numbers or $C$ of complex numbers and $(\Omega,\mathcal{F},P)$ a probability space. $L^0(\mathcal{F},K)$ denotes the algebra of equivalence classes of $K$--valued $\mathcal{F}$--measurable random variables on $(\Omega,\mathcal{F},P)$, $L^0(\mathcal{F}) := L^0(\mathcal{F},R)$ and $\bar{L}^0(\mathcal{F})$ is the set of equivalence classes of extended real--valued $\mathcal{F}$--measurable random variables on $(\Omega,\mathcal{F},P)$.
\par
Proposition \ref{proposition2.1} below can be regarded as the randomized version of the supremum and infimum principle for $\bar{R}:=[-\infty, +\infty]$ and $R$.
\begin{proposition}\cite{DS58}\label{proposition2.1}
Define a partial order $\leq$ on $\bar{L}^0(\mathcal{F})$ as follows: $\xi \leq \eta$ if $\xi^{0}(\omega) \leq \eta^{0}(\omega)$ for almost all $\omega$ in $\Omega$ $($briefly, $\xi^0 \leq \eta^0$ a.s.$)$, where $\xi^{0}$ and $\eta^{0}$ are respectively arbitrarily chosen representatives of $\xi$ and $\eta$ in $\bar{L}^0(\mathcal{F})$. Then $(\bar{L}^{0}(\mathcal{F}),\leq)$ is a complete lattice, $\bigvee H$ and $\bigwedge H$ respectively stand for the supremum and infimum of a subset $H$ of $\bar{L}^0(\mathcal{F})$. Furthermore, the following statements hold:
\begin{enumerate}[(1)]
\item There exist two sequences $\{ a_n, n \in N \}$ and $\{ b_n, n \in N\}$ in $H$ such that $\bigvee H = \bigvee_{n \geq 1} a_n $ and $\bigwedge H = \bigwedge_{n \geq 1} b_n $.
\item If $H$ is directed upwards ,namely for any two elements $h_1$ and $h_2$ in $H$ there exists some $h_3 $ in $H$ such that $ h_3 \geq h_1 \bigvee h_2$, then $\{a_n, n \in N\}$ stated above can be chosen as nondecreasing; similarly, if $H$ is directed downwards, then $\{b_n, n\in N\}$ stated above can ba chosen as nonincreasing.
\item $L^0(\mathcal{F})$, as a sublattice of $\bar{L}^0(\mathcal{F})$, is Dedekind complete (namely, any subset with an upper bound has a supremum).
\end{enumerate}

\end{proposition}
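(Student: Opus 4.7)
The plan is to reduce everything to the classical essential-supremum construction for a family of extended-real-valued measurable functions, since $\bigwedge H = -\bigvee(-H)$ means it suffices to handle $\bigvee H$. The standard tool here is a bounded order-embedding: fix a strictly increasing continuous bijection $\varphi:\bar R\to[-1,1]$ (for instance, $\varphi(x)=x/(1+|x|)$ extended by $\pm1$ at $\pm\infty$) so that comparing elements of $\bar L^0(\mathcal F)$ can be traded against comparing expectations in $[-1,1]$, a totally ordered bounded set where a supremum is easy to access.

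For existence of $\bigvee H$, I would first pass to a fixed family $\widetilde H$ of representatives, one for each class in $H$. Let $\mathcal C$ be the family of all \emph{countable} subsets $C\subset\widetilde H$; for each such $C=\{h_n\}$ the pointwise supremum $s_C:=\sup_n h_n$ is measurable, so $\varphi(s_C)\in L^1$ and $\alpha:=\sup_{C\in\mathcal C}E[\varphi(s_C)]\leq 1$ is well-defined. Pick $C_k\in\mathcal C$ with $E[\varphi(s_{C_k})]\uparrow\alpha$ and set $C^\ast:=\bigcup_k C_k$, still countable; then $s_{C^\ast}$ attains the supremum $\alpha$. For any $h\in\widetilde H$, applying the construction to $C^\ast\cup\{h\}$ gives $s_{C^\ast}\vee h\geq s_{C^\ast}$ with $E[\varphi(s_{C^\ast}\vee h)]\leq\alpha=E[\varphi(s_{C^\ast})]$; strict monotonicity of $\varphi$ forces $h\leq s_{C^\ast}$ a.s., so the equivalence class of $s_{C^\ast}$ is an upper bound of $H$. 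Minimality is automatic: any upper bound dominates every $h_n\in C^\ast$ a.s., hence dominates their countable supremum a.s.

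This construction delivers (1) and (2) simultaneously: an enumeration $\{a_n\}$ of $C^\ast$ satisfies $\bigvee H=\bigvee_{n\geq1}a_n$, and the symmetric argument with $-H$ produces the $\{b_n\}$. For (3), when $H$ is directed upwards I would pick any sequence $\{a_n\}$ from (2) and recursively choose $a'_n\in H$ with $a'_n\geq a'_{n-1}\vee a_n$, using the directedness hypothesis at each stage; the resulting nondecreasing sequence still has supremum $\bigvee H$ since its pointwise limit dominates every $a_n$. The downward-directed case is symmetric. For (4), if $H\subset L^0(\mathcal F)$ admits an upper bound $u\in L^0(\mathcal F)$ (which is a.s. finite), then $\bigvee H\leq u$ a.s., so $\bigvee H$ has an a.s.-finite representative and therefore lies in $L^0(\mathcal F)$.

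The only genuine subtlety, and the step I would single out as the main obstacle, is the measurability issue underlying step~(1): the pointwise supremum over an uncountable family is generally not measurable, so we cannot simply define $\bigvee H$ as a pointwise sup. The countable-subset device together with the extremal selection driven by $\alpha$ is precisely what circumvents this; everything else (directed refinements, transfer to $L^0$) is bookkeeping once the essential-supremum lemma is in hand.
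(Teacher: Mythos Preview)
Your argument is correct and is exactly the classical essential-supremum construction (as found, e.g., in Dunford--Schwartz or Neveu). Note that the paper does not supply its own proof of this proposition: it is stated with the citation \cite{DS58} and used as a known result, so there is nothing to compare against beyond observing that your proof is the standard one underlying that reference. One cosmetic point: your internal numbering has drifted---what you label ``(3)'' and ``(4)'' are items (2) and (3) of the proposition, and your sentence ``delivers (1) and (2) simultaneously'' actually only addresses (1); you may want to realign the labels before writing it up.
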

\par
As usual, for two elements $\xi$ and $\eta$ in $\bar{L}^0(\mathcal{F}), \xi > \eta$ means $\xi \geq \eta$ and $\xi \neq \eta$. Very often, for an $\mathcal{F}$--measurable set $A$, we say $\xi > \eta$ on $A$ if $\xi^0(\omega) >\eta^0(\omega)$ for almost all $\omega$ in $A$, where  $\xi^0$ and $\eta^0$  are respectively arbitrarily chosen representatives of $\xi$ and $\eta$. Similarly, one can understand $\xi \geq \eta$ on $A$.
\par
In this paper, we always employ the following notation and terminologies:
\par
$L^0_+(\mathcal{F}) := \{ \xi \in L^0(\mathcal{F}) ~|~ \xi \geq 0 \}$;
\par
$L^0_{++}(\mathcal{F}) := \{ \xi \in L^0(\mathcal{F}) ~|~ \xi >0$ on $\Omega \}$;
\par
$\bar{L}^0_{++}(\mathcal{F}) := \{ \xi \in \bar{L}^0(\mathcal{F}) ~|~ \xi >0$ on $\Omega \}$;\\
$I_A$ stands for the characteristic function of $A \in \mathcal{F}$, namely $I_A(\omega) =1$ if $\omega \in A$ and 0 otherwise and $\tilde{I}_A$ stands for the equivalence class of $I_A$; $[A]$ stands for the equivalence class of $A \in \mathcal{F}$, namely $[A] := \{ B \in \mathcal{F}~|~ P(A \vartriangle B) =0 \}$, where $A \vartriangle B := (A \cap B^c) \cup (B \cap A^c)$ and $A^c$ stands for the complement of $A$ relative to $\Omega$; For any two elements $\xi$ and $\eta$ in $\bar{L}^0(\mathcal{F})$, let $\xi^0$ and $\eta^0$  be respectively arbitrarily chosen representatives of $\xi$ and $\eta$, since $A := \{ \omega \in \Omega ~|~ \xi^0(\omega) > \eta^0(\omega) \}$ only differs by a null set for different choices of $\xi^0$ and $\eta^0$, we simply write $(\xi >\eta)$ for $A$ for convenience, whereas $[\xi >\eta]$ stands for the equivalence class of $A$, and $I_{[\xi > \eta]}$ stands for $\tilde{I}_A$. Similarly, one can understand $(\xi \geq \eta)$ or $[\xi \geq \eta]$, and so on.
\par
\begin{definition}\cite{Guo92,Guo93}\label{definition2.2}
An ordered pair $(E,\|\cdot\|)$ is called a random normed module over $K$ with base $(\Omega, \mathcal{F},P)$ $($briefly, an $RN$ module$)$ if $E$ is a left module over the algebra $L^0(\mathcal{F},K)$ (briefly, an $L^0(\mathcal{F},K)$--module) and $\|\cdot\|$ is a mapping from $E$ to $L^0_+(\mathcal{F})$ such that the following are satisfied:
\begin{enumerate}[(RNM-1)]
\item $\|x\| = 0$ implies $x = \theta$ $($the null in $E$$)$;
\item $\|\xi \cdot x \| = |\xi| \cdot \|x\|$ for any $(\xi,x) \in L^0(\mathcal{F},K) \times E$, where $\cdot : L^0(\mathcal{F},K) \times E \to E$ stands for the module multiplication operation, $\xi \cdot x$ is often written as $\xi x$;
\item $\|x + y\| \leq \|x\| + \|y\|$ for any $x$ and $y \in E$;

\end{enumerate}
 $\|\cdot\|$ is called the $L^0$--norm on $E$. If $\|\cdot\|$ only satisfies $(RNM-2)$ and $(RNM-3)$, then it is called an $L^0$--seminorm on $E$.
\end{definition}

\begin{example}\label{example2.3}
Let $(B,\|\cdot\|)$ be a normed space over $K$, a mapping $V : (\Omega,\mathcal{F},P) \\
\to B$ is called a random element if $V^{-1}(G) := \{ \omega \in \Omega ~|~ V(\omega) \in G \} \in \mathcal{F}$ for each open subset $G$ of $B$, see \cite{B72,B76}. A random element $V$ is said to be simple if $V$ only takes finitely many values. A random element $V$ is said to be strongly measurable if $V$ is a pointwise limit of a sequence of simple random elements. It is easy to check that a random element is strongly measurable iff its range is separable. Denote by $L^0(\mathcal{F},B)$ the linear space of equivalence classes of $B$--valued strongly measurable random elements on $(\Omega,\mathcal{F},P)$, then $L^0(\mathcal{F},B)$ is an $L^0(\mathcal{F},K)$--module with the module multiplication operation $\cdot : L^0(\mathcal{F},K) \times L^0(\mathcal{F},B) \to L^0(\mathcal{F},B)$ defined by  $\xi x :=$ the equivalence class of $\xi^0 \cdot x^0$, where $\xi^0$ and $x^0$ are respectively arbitrarily chosen representatives of $\xi $ and $x$ for any $(\xi,x) \in L^0(\mathcal{F},K) \times L^0(\mathcal{F},B)$ and $(\xi^0 \cdot x^0)(\omega) = \xi^0(\omega) \cdot x^0(\omega)$ for each $\omega$ in $\Omega$. Furthermore, $L^0(\mathcal{F},B)$ is an $RN$ module over $K$ with base $(\Omega,\mathcal{F},P)$, with the $L^0$--norm induced by the norm $\|\cdot\|$, which is still denoted by $\|\cdot\|$ and is given by $\|x\| =$ the equivalence class of $\|x^0\|$, where $x^0$ is an arbitrarily chosen representatives of $x$ for any $x \in L^0(\mathcal{F},B)$ and $\|x^0\|(\omega) = \|x^0(\omega)\|$ for any $\omega$ in $\Omega$. Specially, $L^0(\mathcal{F},K)$ is an $RN$ module.
\end{example}

\begin{example}\label{example2.4}
When $1 \leq p < +\infty, (S^p_{\mathcal{F}_0},|||\cdot|||_p)$ and $(L^p_{\mathcal{F}_0}(\mathcal{F}_T,R^d), |||\cdot|||_p)$ as in Introduction of this paper are $RN$ modules over $R$ with base $(\Omega,\mathcal{F}_0,P)$. For $p = +\infty$, let $S^\infty_{\mathcal{F}_0} = \{ Y \in S ~|~ |||Y|||_\infty := \bigwedge \{ \eta \in \bar{L}^0_+(\mathcal{F}_0) ~|~ \sup_{t \in [0,T]} |Y_t| \leq \eta \} \in L^0_+(\mathcal{F}_0) \}$, then it is easy to check that $S^\infty_{\mathcal{F}_0}$ is the $L^0(\mathcal{F}_0)$--module generated by $S^\infty$ (see \cite{CN17} for $S^\infty$) and $(S^\infty_{\mathcal{F}_0},|||\cdot|||_\infty)$ becomes an $RN$ module over $R$ with base $(\Omega,\mathcal{F}_0,P)$. Similarly, let $L^\infty_{\mathcal{F}_0}(\mathcal{F}_T,R^d) = \{ x \in L^0(\mathcal{F}_T,R^d)~|~ |||x|||_\infty := \bigwedge \{ \eta \in \bar{L}^0_+(\mathcal{F}) ~|~ |x| \leq \eta \} \in L^0_+(\mathcal{F}) \}$, then $L^\infty_{\mathcal{F}_0}(\mathcal{F}_T,R^d)$ is the $L^0(\mathcal{F}_0)$--module generated by $L^\infty(\mathcal{F}_T,R^d)$ and $(L^\infty_{\mathcal{F}_0}(\mathcal{F}_T,R^d), |||\cdot|||_\infty )$ becomes an $RN$ module over $R$ with base $(\Omega,\mathcal{F}_0,P)$.
\end{example}

\begin{proposition}\cite{Guo92,Guo93}\label{proposition2.5}
Let $(E,\|\cdot\|)$ be an $RN$ module over $K$ with base $(\Omega,\mathcal{F},P)$. For any given positive numbers $\varepsilon$ and $\lambda$  such that $0 < \lambda <1$, let $N_{\theta}(\varepsilon,\lambda) = \{ x \in E ~|~ P\{\omega \in \Omega ~|~ \|x\|(\omega) < \varepsilon \} > 1-\lambda \}$, called the $(\varepsilon,\lambda)$--neighborhood of $\theta$. Then $\{ N_{\theta}(\varepsilon,\lambda)~|~\varepsilon >0 , 0 <\lambda <1 \}$ forms a local base for some metrizable linear topology on $E$, called the $(\varepsilon,\lambda)$--topology. Specially, $L^0(\mathcal{F},K)$ is a Hausdorff topological algebra over $K$ under its $(\varepsilon,\lambda)$--topology, it is obvious that the $(\varepsilon,\lambda)$--topology on $L^0(\mathcal{F},K)$ is exactly the topology of convergence in probability. Further, $E$ is a Hausdorff topological module over the topological $L^0(\mathcal{F},K)$ when $E$ and $L^0(\mathcal{F},K)$ are endowed with their respective $(\varepsilon,\lambda)$--topologies.
\end{proposition}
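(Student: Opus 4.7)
The plan is to verify that $\mathcal{B}:=\{N_\theta(\varepsilon,\lambda):\varepsilon>0,\,0<\lambda<1\}$ satisfies the classical axioms of a translation-invariant local base at the origin for a linear topology. Specifically, I would check four properties. First, each $N_\theta(\varepsilon,\lambda)$ is absorbing: for any fixed $x\in E$, the $L^0$-norm $\|x\|$ is finite a.s., so one may pick $A\in\mathcal{F}$ with $P(A)>1-\lambda$ on which $\|x\|\leq M$ for some $M>0$, and then any scalar $\alpha\in K$ with $|\alpha|<\varepsilon/M$ satisfies $\alpha x\in N_\theta(\varepsilon,\lambda)$ by (RNM-2). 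Second, each set is balanced, since $|\alpha|\leq 1$ gives $\|\alpha x\|=|\alpha|\|x\|\leq\|x\|$, so $\{\|\alpha x\|<\varepsilon\}\supseteq\{\|x\|<\varepsilon\}$. Third, the family is directed downwards under intersection because $N_\theta(\varepsilon_1\wedge\varepsilon_2,\lambda_1\wedge\lambda_2)\subseteq N_\theta(\varepsilon_1,\lambda_1)\cap N_\theta(\varepsilon_2,\lambda_2)$. Fourth, the halving relation $N_\theta(\varepsilon/2,\lambda/2)+N_\theta(\varepsilon/2,\lambda/2)\subseteq N_\theta(\varepsilon,\lambda)$ holds: by (RNM-3) and the elementary bound $P(A\cap B)\geq P(A)+P(B)-1$, if $\|x\|<\varepsilon/2$ on $A$ and $\|y\|<\varepsilon/2$ on $B$ with $P(A),P(B)>1-\lambda/2$, then $\|x+y\|<\varepsilon$ on $A\cap B$ and $P(A\cap B)>1-\lambda$. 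Together, these standard ingredients generate a unique translation-invariant linear topology on $E$.

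Hausdorffness is immediate from (RNM-1): if $x\neq\theta$, then $\|x\|>0$ on some set of positive probability, and choosing $\varepsilon,\lambda$ small forces $x\notin N_\theta(\varepsilon,\lambda)$. Metrizability then follows from the Birkhoff--Kakutani criterion because $\{N_\theta(1/n,1/n):n\in\mathbb{N}\}$ is a countable local base. In the special case $E=L^0(\mathcal{F},K)$ with $\|\cdot\|=|\cdot|$, the neighborhoods $N_0(\varepsilon,\lambda)=\{\xi:P(|\xi|\geq\varepsilon)<\lambda\}$ are exactly the standard sub-basic neighborhoods of $0$ in the topology of convergence in probability, so the two topologies coincide on $L^0(\mathcal{F},K)$.

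The one genuinely non-trivial step, and the expected source of technical care, is continuity of the module multiplication $\cdot:L^0(\mathcal{F},K)\times E\to E$. I would handle this by writing
\[
\|\xi_n x_n-\xi x\|\leq|\xi_n|\cdot\|x_n-x\|+|\xi_n-\xi|\cdot\|x\|
\]
and invoking the standard fact that the product of a tight sequence and a null sequence, both taken in probability, is null in probability: if $\xi_n\to\xi$ and $\|x_n-x\|\to 0$ in probability, then $\{|\xi_n|\}$ is bounded in probability, so $|\xi_n|\cdot\|x_n-x\|\to 0$ in probability, while the second summand tends to $0$ in probability because $\|x\|$ is a fixed a.s.~finite random variable. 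The same argument applied with $\|\cdot\|=|\cdot|$ also yields joint continuity of multiplication $L^0(\mathcal{F},K)\times L^0(\mathcal{F},K)\to L^0(\mathcal{F},K)$, establishing the topological algebra structure. The potential pitfall here, which the tightness argument is designed to absorb, is the familiar "large scalar times small vector" phenomenon that could otherwise break continuity at the product level.
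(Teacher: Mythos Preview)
Your argument is correct and complete as a sketch: the four local-base axioms you verify are exactly what is needed to generate a translation-invariant linear topology, the Birkhoff--Kakutani route to metrizability is standard, and your tightness argument for joint continuity of the module multiplication is the right one (and indeed the only place where any real work is needed).

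As for comparison: the paper does not actually prove Proposition~\ref{proposition2.5}. It is stated with a citation to \cite{Guo92,Guo93} and treated as a known foundational result, so there is no in-text proof to compare against. Your write-up therefore supplies what the paper outsources to the literature. One minor remark: in your absorbing argument you implicitly use that scalar multiplication by a \emph{constant} $\alpha\in K$ satisfies $\|\alpha x\|=|\alpha|\,\|x\|$, which is a special case of (RNM-2) with $\xi$ the constant $\alpha$; this is fine, but worth making explicit since (RNM-2) is phrased for $\xi\in L^0(\mathcal{F},K)$.
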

\par
From now on, for brevity and convenience the $(\varepsilon,\lambda)$--topology for any $RN$ module is always denoted by $\mathcal{T}_{\varepsilon,\lambda}$.
\par
The terminology `` $(\varepsilon,\lambda)$--topology " comes from theory of probabilistic metric spaces \cite{SS83}, where the spaces more general than $RN$ modules are often endowed with this kind of topology by B.Schweizer and A.Sklar. Essentially speaking, the $(\varepsilon,\lambda)$--topology for an $RN$ module is not locally convex in general, for example, the simplest $RN$ module $L^0(\mathcal{F},K)$ does not admit any nontrivial continuous linear functional if $\mathcal{F}$ does not contain any atom. To overcome the difficulty, Guo introduced the notion of a.s. bounded random linear functionals, established the Hahn--Banach theorems for them and thus led to the theory of random conjugate spaces for $RN$ modules. Let $(E,\|\cdot\|)$ be an $RN$ module over $K$ with base $(\Omega,\mathcal{F},P)$, a linear operator $f : E \to L^0(\mathcal{F},K)$ is said to be a.s. bounded if there exists some $\xi \in L^0_+(\mathcal{F})$ such that $|f(x)| \leq \xi \cdot \|x\|$ for any $x \in E$, $\|f\| := \bigwedge \{ \xi \in L^0_+(\mathcal{F}) ~|~ |f(x)| \leq \xi \cdot \|x\|$ for any $x \in E \}$ is called the $L^0$--norm of $f$. It was proved in \cite{Guo92,Guo93,Guo96b} that a linear operator $f : E \to L^0(\mathcal{F},K)$ is a.s. bounded if and only if $f$ is a continuous module homomorphism from $(E,\mathcal{T}_{\varepsilon,\lambda})$ to $(L^0(\mathcal{F},K),\mathcal{T}_{\varepsilon,\lambda})$, in which case $\|f\| = \bigvee \{ |f(x)| ~|~ x \in E$ and $\|x\| \leq 1 \}$. Denote by $E^*_{\varepsilon,\lambda}$ the $L^0(\mathcal{F},K)$--module of continuous module homomorphisms from $(E,\mathcal{T}_{\varepsilon,\lambda})$ to $(L^0(\mathcal{F},K),\mathcal{T}_{\varepsilon,\lambda})$, then $(E^*_{\varepsilon,\lambda},\|\cdot\|)$ is again an $RN$ module over $K$ with base $(\Omega,\mathcal{F},P)$, called the random conjugate space of $E$ under the $(\varepsilon,\lambda)$--topology. Finally, by the Hahn--Banach theorem, $\|x\| = \bigvee\{ |f(x)| ~|~ f \in E^*$ and $\|f\| \leq 1 \}$ for any $x \in E$.
\par
Roughly speaking, the $(\varepsilon,\lambda)$--topology is an abstract generalization of the ordinary topology of convergence in probability, it is very natural and useful in probability theory and stochastic finance. However, it is too weak to meet some needs of random convex analysis, which is established to provide a convex--analysis--like tool for conditional risk measures and related optimization problems, see \cite{FKV09,GZWW17,GZWYYZ17,GZZ14,GZZ15a,GZZ15b}. In 2009, the financial applications motivated Filipovi\'{c}, et.al to introduce another kind of topology for an $RN$ module in \cite{FKV09}, called the locally $L^0$--convex topology. As a part of their work of \cite{FKV09},we state it as follows:
\begin{proposition}\cite{FKV09}\label{proposition2.6}
Let $(E,\|\cdot\|)$ be an $RN$ module over $K$ with base $(\Omega,\mathcal{F},P)$. For any given $\varepsilon \in L^0_{++}(\mathcal{F})$, let $B_\theta(\varepsilon) = \{ x \in E ~|~\|x\| <\varepsilon$ on $ \Omega \}$, a subset $G$ of $E$ is said to be open if for each $g \in G$ there exists some $\varepsilon \in L^0_{++}(\mathcal{F})$ such that $g + B_\theta(\varepsilon) \subset G$, then all such open subsets forms a Hausdorff topology, called the locally $L^0$--convex topology for $E$, denoted by $\mathcal{T}_c$. Specially, $L^0(\mathcal{F}, K)$ is a topological ring under its locally $L^0$--convex topology and $E$ is a topological module over the topological ring $L^0(\mathcal{F}, K)$ when $E$ and $L^0(\mathcal{F}, K)$ are respectively endowed with their locally $L^0$--convex topologies, in which case $\{ B_\theta(\varepsilon)~|~\varepsilon \in L^0_{++}(\mathcal{F})\}$ forms a local base for the topological module $(E,\mathcal{T}_c)$.
\end{proposition}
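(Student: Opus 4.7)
The plan is to verify the five assertions in order: (i) the prescribed family of sets is a topology on $E$, (ii) this topology is Hausdorff, (iii) $L^0(\mathcal{F},K)$ is a topological ring in its locally $L^0$--convex topology, (iv) $E$ is a topological module over it, and (v) the family $\{B_\theta(\varepsilon):\varepsilon\in L^0_{++}(\mathcal{F})\}$ is a local base at $\theta$. Step (i) is essentially book--keeping: closure under arbitrary unions is built into the definition, and for $g\in G_1\cap G_2$ one picks $\varepsilon_i\in L^0_{++}(\mathcal{F})$ with $g+B_\theta(\varepsilon_i)\subset G_i$ and uses that $\varepsilon_1\wedge\varepsilon_2\in L^0_{++}(\mathcal{F})$ with $B_\theta(\varepsilon_1\wedge\varepsilon_2)\subset B_\theta(\varepsilon_1)\cap B_\theta(\varepsilon_2)$.

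Step (ii) is the point I expect to be the main obstacle, because for distinct $x,y\in E$ the quantity $\|x-y\|\in L^0_+(\mathcal{F})$ need not be strictly positive on all of $\Omega$, while the balls that make up neighborhoods require radii in $L^0_{++}(\mathcal{F})$. To get around this I would set $A=(\|x-y\|>0)\in\mathcal{F}$, which has positive measure by (RNM-1), and let $\varepsilon=\tfrac{1}{3}\|x-y\|+\tilde{I}_{A^{c}}$. This $\varepsilon$ lies in $L^0_{++}(\mathcal{F})$: it equals $\tfrac{1}{3}\|x-y\|$ on $A$ and $1$ on $A^{c}$. If $(x+B_\theta(\varepsilon))\cap(y+B_\theta(\varepsilon))$ contained some $z$, the triangle inequality would give $\|x-y\|<2\varepsilon$ on $\Omega$; but on $A$ this reads $\|x-y\|<\tfrac{2}{3}\|x-y\|$ with $\|x-y\|>0$, a contradiction on a set of positive probability, so the two neighborhoods are disjoint.

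For (iii) and (iv) I would use the standard identity $\xi' x'-\xi x=\xi(x'-x)+(\xi'-\xi)x+(\xi'-\xi)(x'-x)$ together with $\|\xi x\|=|\xi|\,\|x\|$ (and its $L^0$--ring analogue with $x,x'$ replaced by elements of $L^0(\mathcal{F},K)$). Given a target radius $\varepsilon\in L^0_{++}(\mathcal{F})$ at a point $(\xi,x)$, the perturbation radius can be chosen of the form $\delta=\bigl(\varepsilon/(3(1+|\xi|))\bigr)\wedge\bigl(\varepsilon/(3(1+\|x\|))\bigr)\wedge\sqrt{\varepsilon/3}$, which belongs to $L^0_{++}(\mathcal{F})$ because this set is closed under $\wedge$, sums, products, square roots, and reciprocals of its own elements; the three summand--estimates then each contribute at most $\varepsilon/3$. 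Continuity of addition is even easier by splitting $\varepsilon/2+\varepsilon/2$. Step (v) is built in: by definition every open neighborhood of a point $g$ contains some translate $g+B_\theta(\varepsilon)$, and translation invariance (which follows from (iv) applied to the scalar $1$) yields that $\{B_\theta(\varepsilon):\varepsilon\in L^0_{++}(\mathcal{F})\}$ is a local base at $\theta$.
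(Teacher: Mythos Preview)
The paper does not supply its own proof of Proposition~\ref{proposition2.6}; the result is simply quoted from \cite{FKV09}. So there is no ``paper's proof'' to compare your argument against, and your proposal is doing strictly more than the paper does here.

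Your argument is essentially correct and follows the standard verification route. One small omission worth tightening: in steps (ii) and (v) you tacitly use that each set $g+B_\theta(\varepsilon)$ is itself $\mathcal{T}_c$--open, not merely that open sets contain such translates. This is easy but should be said: if $h\in B_\theta(\varepsilon)$ then $\delta:=\varepsilon-\|h\|\in L^0_{++}(\mathcal{F})$ and $h+B_\theta(\delta)\subset B_\theta(\varepsilon)$, so $B_\theta(\varepsilon)$ is open; openness of $g+B_\theta(\varepsilon)$ then follows either by the same computation or, once (iv) is in hand, from translation invariance. With that in place your Hausdorff separation in (ii) goes through cleanly, and your $\delta$--bookkeeping in (iii)--(iv) is fine (note that for the cross term you get $\|(\xi'-\xi)(x'-x)\|<\delta^2\le\varepsilon/3$, which suffices since the ball inequality is strict). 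Your invocation of closure of $L^0_{++}(\mathcal{F})$ under $\wedge$, products, reciprocals and square roots is legitimate, as these are pointwise operations on a.s.\ strictly positive representatives.
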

\par
From now on, the locally $L^0$--convex topology for any $RN$ module is always denoted by $\mathcal{T}_c$. When $(\Omega,\mathcal{F},P)$ is a trivial probability space, namely $\mathcal{F} = \{ \Omega, \emptyset\}$, an $RN$ module with base $(\Omega,\mathcal{F},P)$ degenerates to an ordinary normed space, at which time the $(\varepsilon,\lambda)$--topology and the locally $L^0$--convex topology coincide. In general, $\mathcal{T}_c$ is much stronger than $\mathcal{T}_{\varepsilon,\lambda}$ and $\mathcal{T}_c$ is rarely a linear topology, but there are many natural connections between the two theories derived from the two kinds of topologies, which was first studied by Guo in \cite{Guo10}, for example, let $(E,\|\cdot\|)$ be an $RN$ module over $K$ with base $(\Omega,\mathcal{F},P)$ and $E^*_c$ the $L^0(\mathcal{F},K)$--module of continuous module homomorphisms from $(E,\mathcal{T}_{c})$ to $(L^0(\mathcal{F},K),\mathcal{T}_{c})$, (called the random conjugate space of $E$ under $\mathcal{T}_c$), then $E^*_{\varepsilon,\lambda} = E^*_c$, so we often simply write $E^*$ for both $E^*_{\varepsilon,\lambda}$ and $E^*_c$. Other deep connections are realized via the following crucial notion-- the countable concatenation property.
\begin{definition}\cite{Guo10}\label{definition2.7}
A subset $G$ of an $L^0(\mathcal{F}, K)$--module $E$ is said to have the countable concatenation property (simply, $G$ is stable )if for any sequence $\{ x_n: n \in N \}$ in $G$ and any countable partition $\{ A_n: n \in N \}$ of $\Omega$ to $\mathcal{F}$ ( namely, each $A_n \in \mathcal{F}, A_n \cap A_m = \emptyset $ for $n \neq m $ and $\cup_{n \in N} A_n = \Omega $) there exists some $x \in G$ such that ${\tilde I}_{A_n}x = {\tilde I}_{A_n}x_n$ for each $n \in N$.
\end{definition}
\begin{remark}\label{remark2.8}
(1). For a subset $G$ of an $RN$ module $(E,\|\cdot\|)$ or, more generally, a random locally convex module $(E,\mathcal{P})$ (see \cite{Guo10}), if $G$ is stable, then $x$ in Definition \ref{definition2.7} always uniquely exists, in which case $x$ is usually denoted by $\sum_{n \geq 1} \tilde{I}_{A_n}x_n$. (2). Guo proved in \cite{Guo10} that for each $\mathcal{T}_{\varepsilon,\lambda}$--complete $RN$ module $(E,\|\cdot\|)$, $E$ is always stable, it is straightforward to verify that $(S^p_{\mathcal{F}_0},|||\cdot|||_p)$,$(M^p_{\mathcal{F}_0},|||\cdot|||_p)$ and $(L^p_{\mathcal{F}_0}(\mathcal{F}_T,R^d),|||\cdot|||_p)$ are all $\mathcal{T}_{\varepsilon,\lambda}$--complete, so $S^p_{\mathcal{F}_0}, M^p_{\mathcal{F}_0}$ and $L^p_{\mathcal{F}_0}(\mathcal{F}_T,R^d)$ are all stable (in fact, the verification that $L^p_{\mathcal{F}_0}(\mathcal{F}_T,R^d)$ is stable has been done in \cite{GZZ14}). (3). It is obvious that $L^0(\mathcal{F},B)$ is also stable for any normed space. (4). It was proved in \cite{Guo10} that an $RN$ module $(E,\|\cdot\|)$ is $\mathcal{T}_{\varepsilon,\lambda}$--complete if and only if both $E$ is stable and  $(E,\|\cdot\|)$ is $\mathcal{T}_c$--complete.
\end{remark}
\par
The following proposition is a special case of \cite[Theorem 3.12]{Guo10}, which can be used to derive some interesting results simplifying the proofs of this paper.
\begin{proposition}\label{proposition2.9}\cite{Guo10}
Let $G$ be a stable nonempty subset of an $RN$ module $(E, \|\cdot\|)$ . Then $\bar{G}_{\varepsilon,\lambda} = \bar{G}_c$, where $\bar{G}_{\varepsilon,\lambda}$ and $\bar{G}_c$ respectively stand for the $\mathcal{T}_{\varepsilon,\lambda}$--closure and the $\mathcal{T}_c$--closure of $G$.
\end{proposition}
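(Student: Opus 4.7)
The easy direction is $\bar{G}_c \subseteq \bar{G}_{\varepsilon,\lambda}$: since $\mathcal{T}_c$ is (in general) strictly finer than $\mathcal{T}_{\varepsilon,\lambda}$, every $\mathcal{T}_{\varepsilon,\lambda}$-closed set is $\mathcal{T}_c$-closed, hence $\bar{G}_c \subseteq \bar{G}_{\varepsilon,\lambda}$ (the $\mathcal{T}_{\varepsilon,\lambda}$-closure is $\mathcal{T}_c$-closed and contains $G$). All work is in the reverse inclusion, and this is where the stability hypothesis must be used.

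So fix $x \in \bar{G}_{\varepsilon,\lambda}$. Since $\mathcal{T}_{\varepsilon,\lambda}$ is metrizable, pick a sequence $\{x_n\}_{n \in N} \subseteq G$ with $x_n \to x$ in $\mathcal{T}_{\varepsilon,\lambda}$, i.e.\ $\|x_n - x\| \to 0$ in probability. By a standard argument, pass to a subsequence (still denoted $\{x_n\}$) with $\|x_n - x\| \to 0$ almost surely. To see that $x \in \bar{G}_c$ I will show that for an arbitrary $\varepsilon \in L^0_{++}(\mathcal{F})$ there is some $y \in G$ with $\|y - x\| < \varepsilon$ on $\Omega$; equivalently, $y \in x + B_{\theta}(\varepsilon)$.

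The construction is the natural one suggested by stability. Define $\mathcal{F}$-measurable sets
\[
A_1 = (\|x_1 - x\| < \varepsilon), \qquad A_n = (\|x_n - x\| < \varepsilon) \setminus \bigcup_{k<n} A_k \ (n \geq 2).
\]
The $A_n$ are pairwise disjoint, and because $\|x_n - x\| \to 0$ a.s.\ while $\varepsilon > 0$ on $\Omega$, for almost every $\omega$ there is some $n$ with $\|x_n - x\|(\omega) < \varepsilon(\omega)$; thus $\{A_n : n \in N\}$ is (up to a null set) a countable partition of $\Omega$ into $\mathcal{F}$. Since $G$ is stable, there exists $y \in G$ with $\tilde I_{A_n} y = \tilde I_{A_n} x_n$ for every $n$. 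Using $(RNM\text{-}2)$ we get, on each $A_n$,
\[
\|y - x\| = \|\tilde I_{A_n}(y - x)\| = \|\tilde I_{A_n}(x_n - x)\| = \tilde I_{A_n}\|x_n - x\| < \varepsilon,
\]
so $\|y - x\| < \varepsilon$ on $\Omega$, which yields $y \in x + B_\theta(\varepsilon)$ as required. Hence every $\mathcal{T}_c$-neighborhood $x + B_\theta(\varepsilon)$ meets $G$, i.e.\ $x \in \bar{G}_c$.

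The only genuinely delicate step is the countable-concatenation construction of $y$: one must (i) note that the a.s.-convergent subsequence makes $\{A_n\}$ a bona fide partition of $\Omega$, (ii) invoke stability of $G$ to produce $y \in G$ whose restriction to $A_n$ agrees with $x_n$, and (iii) use the $L^0$-homogeneity of $\|\cdot\|$ to transfer the local estimate $\|x_n - x\| < \varepsilon$ on $A_n$ into the global estimate $\|y - x\| < \varepsilon$ on $\Omega$. Everything else is bookkeeping around the standard fact that convergence in probability admits an almost sure subsequence.
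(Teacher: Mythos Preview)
Your argument is correct. The paper itself does not prove this proposition; it simply states it as a special case of \cite[Theorem 3.12]{Guo10} and uses it as a tool. Your direct proof is the natural one: the key observation is exactly the countable-concatenation trick you describe, using a.s.\ convergence of a subsequence to build a partition $\{A_n\}$ and then invoking stability of $G$ to glue together a single approximant $y \in G$ that works in the $\mathcal{T}_c$ sense. One cosmetic remark: the chain of equalities in step (iii) is meant to be read ``on $A_n$'' throughout; more precisely, $(RNM\text{-}2)$ gives $\tilde I_{A_n}\|y-x\| = \|\tilde I_{A_n}(y-x)\| = \|\tilde I_{A_n}(x_n-x)\| = \tilde I_{A_n}\|x_n-x\|$, from which $\|y-x\| < \varepsilon$ on $A_n$ follows. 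With that clarification the proof is clean.
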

\begin{lemma}\label{lemma2.10}
Let $G$ be a stable nonempty subset of $L^0(\mathcal{F})$ such that $G$ is bounded above (namely there exists some $\eta \in L^0(\mathcal{F})$ such that $\xi \leq \eta$ for any $\xi \in G$), then for any given $\varepsilon \in L^0_{++}(\mathcal{F})$ there exists some $\xi_{\varepsilon} \in G$ such that $\xi_{\varepsilon} > \bigvee G - \varepsilon$ on $\Omega$. Similarly, if $G$ is bounded below, then for any given $\varepsilon \in L^0_{++}(\mathcal{F})$ there exists some $\xi_{\varepsilon} \in G$ such that $\xi_{\varepsilon} < \bigwedge G + \varepsilon$ on $\Omega$.
\end{lemma}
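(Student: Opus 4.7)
The plan is to combine Proposition 2.1 (existence of countable suprema and monotone selection) with the stability hypothesis to reduce the problem to picking one element of $G$ that dominates $\bigvee G - \varepsilon$ simultaneously on every piece of a suitable countable partition.

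First I would verify that $G$ is directed upwards. Given $\xi_1,\xi_2 \in G$, let $A = [\xi_1 \geq \xi_2]$ and $A^c$; by stability of $G$ applied to the countable partition $\{A,A^c\}$ (augmented with empty sets), there exists $\xi_3 \in G$ with $\tilde{I}_A \xi_3 = \tilde{I}_A \xi_1$ and $\tilde{I}_{A^c} \xi_3 = \tilde{I}_{A^c} \xi_2$, so $\xi_3 = \xi_1 \vee \xi_2 \in G$. Thus $G$ is directed upwards (indeed closed under finite suprema). Since $L^0(\mathcal{F})$ is Dedekind complete by Proposition 2.1(3), $\bigvee G$ exists in $L^0(\mathcal{F})$, and Proposition 2.1(2) supplies a nondecreasing sequence $\{a_n\}_{n \geq 1} \subset G$ with $\bigvee_{n \geq 1} a_n = \bigvee G$.

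Next, given $\varepsilon \in L^0_{++}(\mathcal{F})$, I would pick representatives of $a_n$, $\bigvee G$, and $\varepsilon$ and set $B_n = (a_n > \bigvee G - \varepsilon)$. Because $\{a_n\}$ is nondecreasing and converges a.s. to $\bigvee G$ while $\varepsilon > 0$ on $\Omega$, the sets $B_n$ are nondecreasing and $\cup_n B_n = \Omega$ up to a null set. Define the countable measurable partition $A_1 = B_1$ and $A_n = B_n \setminus B_{n-1}$ for $n \geq 2$. By the stability of $G$ applied to this partition, there exists $\xi_\varepsilon \in G$ with $\tilde{I}_{A_n}\xi_\varepsilon = \tilde{I}_{A_n} a_n$ for each $n$. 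On each $A_n$ we then have $\xi_\varepsilon = a_n > \bigvee G - \varepsilon$, hence $\xi_\varepsilon > \bigvee G - \varepsilon$ on all of $\Omega$, as required.

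The second assertion follows by symmetry: if $G$ is bounded below, then $-G := \{-\xi : \xi \in G\}$ is stable (stability is preserved under negation) and bounded above with $\bigvee(-G) = -\bigwedge G$, so applying the first part to $-G$ and $\varepsilon$ yields the desired $\xi_\varepsilon \in G$ with $\xi_\varepsilon < \bigwedge G + \varepsilon$ on $\Omega$.

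The main obstacle, which is genuinely mild here, is the interplay between a.s.\ statements and pointwise statements when invoking stability: one must make sure that the monotone sequence supplied by Proposition 2.1(2) actually lies in $G$ (not merely in some order-closure of $G$), and that the partition $\{A_n\}$ exhausts $\Omega$ rather than just having full measure. Both points are handled by the two stability applications above, using the observation that modifying representatives on a null set does not affect membership in $G$.
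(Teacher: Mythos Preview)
Your proof is correct, but the route differs from the paper's in an instructive way. Both arguments start identically: use stability to show $G$ is directed upwards, then invoke Proposition~\ref{proposition2.1}(2) to extract a nondecreasing sequence $\{a_n\}\subset G$ with a.s.\ limit $\bigvee G$. From that point the paper proceeds abstractly: the a.s.\ convergence of $\{a_n\}$ puts $\bigvee G$ in the $\mathcal{T}_{\varepsilon,\lambda}$--closure $\bar G_{\varepsilon,\lambda}$, and Proposition~\ref{proposition2.9} then gives $\bigvee G\in\bar G_c$, which by the definition of the locally $L^0$--convex topology immediately yields some $\xi_\varepsilon\in G$ with $|\xi_\varepsilon-\bigvee G|<\varepsilon$ on $\Omega$. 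You instead build $\xi_\varepsilon$ explicitly by disjointifying the sets $B_n=(a_n>\bigvee G-\varepsilon)$ into a partition and gluing the $a_n$ via a second appeal to stability. Your argument is more self-contained and avoids any reference to Proposition~\ref{proposition2.9} or the $\mathcal{T}_c$--topology; the paper's argument is shorter once that machinery is in place, and in effect shows that Proposition~\ref{proposition2.9} encapsulates precisely the countable-gluing step you carry out by hand.
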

\begin{proof}
We only give the proof for the case when $G$ is bounded above, the other case is similar.
\par
First, $G$ is directed upwards : for any two elements $\xi_1$ and $\xi_2$, let $\xi_3 = \tilde{I}_{(\xi_1 <\xi_2)} \xi_2 + \tilde{I}_{(\xi_1 \geq \xi_2)} \xi_1$, then it is obvious that $\xi_3 \in G$ and $\xi_3 = \xi_1 \bigvee \xi_2$. By (2) of Proposition \ref{proposition2.1}, there is a nondecreasing sequence $\{ \xi_n, n \in N \}$ of $G$ such that $\{ \xi_n, n \in N \}$ converges a.s. to $\bigvee G$ in a nondecreasing way. Since $G$ is bounded above, $\bigvee G \in L^0(\mathcal{F})$, it is very easy to see $\bigvee G \in \bar{G}_{\varepsilon,\lambda}$, and hence $\bigvee G \in \bar{G}_c$ by Proposition \ref{proposition2.9}, which in turn implies that there exists some $\xi_\varepsilon \in G$ for any given $\varepsilon \in L^0_{++}(\mathcal{F})$ such that $\xi_{\varepsilon} > \bigvee G - \varepsilon$ on $\Omega$.
\end{proof}
\par
Let $(E_1,\|\cdot\|_1)$ and $(E_2,\|\cdot\|_2)$ be two $RN$ modules over $K$ with base $(\Omega,\mathcal{F},P)$ and $G$ a nonempty subset of $E_1$ such that both $G$ and $E_2$ are stable. A mapping $T : G \to E_2$ is said to be stable if $T(\sum_{n\geq 1} \tilde{I}_{A_n} g_n) = \sum_{n\geq 1} \tilde{I}_{A_n}T(g_n)$ for any sequence $\{ g_n: n \in N \}$ in $G$ and any countable partition $\{ A_n: n \in N \}$ of $\Omega$ to $\mathcal{F}$. It is very to see that if $T$ is stable, then $T(G)$ is also stable.
\begin{lemma}\label{lemma2.11}
Let $(E_1,\|\cdot\|_1)$ and $(E_2,\|\cdot\|_2)$ be two $\mathcal{T}_{\varepsilon,\lambda}$--complete $RN$ modules over $K$ with base $(\Omega,\mathcal{F},P)$, $G$ a stable subset of $E_1$ and $T : G \to E_2$ is $L^0$--Lipschitzian $($namely, there exists $\xi \in L^0_+(\mathcal{F})$ such that $\|T(x_1) - T(x_2)\|_2 \leq \xi \|x_1 -x_2 \|_1$ for any $x_1$ and $x_2$ in $E_1)$. Then $T$ is stable.
\end{lemma}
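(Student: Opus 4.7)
The plan is to show that $T$ respects countable concatenation by ``localizing'' the $L^0$-Lipschitz estimate to each piece $A_n$ of the partition and exploiting (RNM-2), which allows characteristic-function scalars to pass through the $L^0$-norm.

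First I would set up the two relevant concatenated elements. Fix any sequence $\{g_n\}$ in $G$ and any countable partition $\{A_n\}$ of $\Omega$ in $\mathcal{F}$. Since $G$ is stable, there exists $g := \sum_{n \geq 1} \tilde{I}_{A_n} g_n \in G$, and by definition this element satisfies $\tilde{I}_{A_n} g = \tilde{I}_{A_n} g_n$ for every $n$. Since $(E_2, \|\cdot\|_2)$ is $\mathcal{T}_{\varepsilon,\lambda}$-complete, Remark \ref{remark2.8}(2) says $E_2$ is stable, so $h := \sum_{n \geq 1} \tilde{I}_{A_n} T(g_n)$ exists in $E_2$ and satisfies $\tilde{I}_{A_n} h = \tilde{I}_{A_n} T(g_n)$ for every $n$. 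The goal is then $T(g) = h$.

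Second, for each fixed $n$ I would combine the Lipschitz bound with (RNM-2):
$$
\|\tilde{I}_{A_n}(T(g) - T(g_n))\|_2 \;=\; \tilde{I}_{A_n}\|T(g) - T(g_n)\|_2 \;\leq\; \tilde{I}_{A_n}\,\xi\,\|g - g_n\|_1 \;=\; \xi\,\|\tilde{I}_{A_n}(g - g_n)\|_1.
$$
Since $\tilde{I}_{A_n} g = \tilde{I}_{A_n} g_n$, the right-hand side is $0$, so $\tilde{I}_{A_n}(T(g) - T(g_n)) = \theta$ by (RNM-1), i.e.\ $\tilde{I}_{A_n} T(g) = \tilde{I}_{A_n} T(g_n) = \tilde{I}_{A_n} h$ for every $n$.

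Finally, I would assemble the local equalities into a global one. The previous step gives $\tilde{I}_{A_n}\|T(g) - h\|_2 = \|\tilde{I}_{A_n}(T(g) - h)\|_2 = 0$ for each $n$, so $\|T(g) - h\|_2$ vanishes a.s.\ on each $A_n$; since $\{A_n\}$ partitions $\Omega$ it vanishes a.s.\ on $\Omega$, whence $T(g) = h = \sum_{n \geq 1} \tilde{I}_{A_n} T(g_n)$, which is exactly stability of $T$. The only step that requires care is the interplay between the module scalar $\tilde{I}_{A_n}$ and the $L^0$-norm: one must apply (RNM-2) in both directions, first to pull $\tilde{I}_{A_n}$ inside $\|\cdot\|_2$ and then to push it inside $\|\cdot\|_1$. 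This is a routine use of the axioms rather than a genuine obstacle, so no heavier machinery (such as Proposition \ref{proposition2.9}) is needed here.
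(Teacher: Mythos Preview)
Your proof is correct and actually more streamlined than the paper's. The paper proceeds in two stages: it first establishes the \emph{finite} concatenation identity $T(\tilde I_A x_1 + \tilde I_{A^c} x_2) = \tilde I_A T(x_1) + \tilde I_{A^c} T(x_2)$ by a direct norm computation, and then passes to countable partitions via a limiting argument, writing $\sum_{n\geq 1}\tilde I_{A_n} x_n = \sum_{n=1}^{k}\tilde I_{A_n} x_n + \tilde I_{(\cup_{n\leq k}A_n)^c}\bigl(\sum_{n\geq 1}\tilde I_{A_n} x_n\bigr)$, applying the finite case, and letting $k\to\infty$ using $\mathcal T_{\varepsilon,\lambda}$-completeness of $E_2$ and $P\bigl(\cup_{n>k}A_n\bigr)\to 0$. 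Your route bypasses the limit entirely: by localizing the Lipschitz estimate via (RNM-2) you obtain $\tilde I_{A_n}T(g) = \tilde I_{A_n}T(g_n)$ directly for every $n$, and then the identity $\|T(g)-h\|_2 = \sum_n \tilde I_{A_n}\|T(g)-h\|_2 = 0$ finishes it algebraically. The paper's approach has the minor advantage of isolating the two-piece case as a reusable lemma, but yours is shorter and avoids any appeal to convergence; both use completeness of $E_2$ only to ensure $h$ exists.
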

\begin{proof}
We first prove that $T(\tilde{I}_{A} x_1 +\tilde{I}_{A^c} x_2) = \tilde{I}_{A} T(x_1) + \tilde{I}_{A^c} T(x_2)$ for any $A \in \mathcal{F}$ and for any $x_1$ and $x_2$ in $G$. In fact, $\|T(\tilde{I}_{A}x_1 + \tilde{I}_{A^c}x_2) - \tilde{I}_{A}T(x_1) - \tilde{I}_{A^c}T(x_2)\|_2 = \|(\tilde{I}_{A} +\tilde{I}_{A^c})T(\tilde{I}_{A}x_1 + \tilde{I}_{A^c}x_2) - \tilde{I}_{A}T(x_1) - \tilde{I}_{A^c}T(x_2)\|_2  \leq \tilde{I}_{A}\cdot\tilde{I}_{A^c}\cdot\xi\cdot\|x_1-x_2\|_1 + \tilde{I}_{A^c}\cdot\tilde{I}_{A}\cdot\xi\cdot\|x_1-x_2\|_1 =0$, so $T(\tilde{I}_{A}x_1 + \tilde{I}_{A^c}x_2) = \tilde{I}_{A}T(x_1) + \tilde{I}_{A^c}T(x_2)$.
\par
Second, for any sequence $\{ x_n: n \in N \}$ in $G$ and any countable partition $\{ A_n: n \in N \}$ of $\Omega$ to $\mathcal{F}$, let $x := \sum_{n \geq 1} \tilde{I}_{A_n} x_n $, since $E_1$ is $\mathcal{T}_{\varepsilon,\lambda}$--complete and $\{ \sum_{n = 1}^{k} \tilde{I}_{A_n} x_n, k \in N \}$ is a $\mathcal{T}_{\varepsilon,\lambda}$--Cauchy sequence, $x$ is exactly the limit of the Cauchy sequence. Since $\sum_{n\geq1} \tilde{I}_{A_n} x_n = \sum^{k}_{n=1} \tilde{I}_{A_n} x_n + \tilde{I}_{(\bigcup^{k}_{n=1}A_n)^c} (\sum_{n\geq1} \tilde{I}_{A_n} x_n)$,  $T(\sum_{n\geq 1} \tilde{I}_{A_n} x_n) = \sum^{k}_{n=1} \tilde{I}_{A_n} T(x_n) + \tilde{I}_{(\bigcup^{k}_{n=1}A_n)^c} T(
\sum_{n \geq 1}  \tilde{I}_{A_n} x_n)$. Further, since $E_2$ is $\mathcal{T}_{\varepsilon,\lambda}$--complete, $\sum_{n\geq1} \tilde{I}_{A_n} T(x_n) = \lim_{k\to \infty} 
 \sum^{k}_{n=1} \tilde{I}_{A_n} T(x_n)$. It is obvious that $\tilde{I}_{(\bigcup^{k}_{n=1}A_n)^c} T(\sum_{n \geq 1}  \tilde{I}_{A_n} x_n)$ converges in $\mathcal{T}_{\varepsilon,\lambda}$ to $\theta$ when $k \to \infty$ by noticing $\lim_{k \to \infty}\sum^{\infty}_{n=k+1} P(A_n) =0$, then $T(\sum_{n\geq1} \tilde{I}_{A_n} x_n) 
 = \sum_{n\geq1}  \tilde{I}_{A_n}  T(x_n)$.
\end{proof}
\par
Let $(E,\|\cdot\|)$ be a $\mathcal{T}_{\varepsilon,\lambda}$--complete $RN$ module over $K$ with base $(\Omega,\mathcal{F},P)$ (hence $E$ is stable), $T$ a mapping from $E$ to $E$ and $L : (\Omega,\mathcal{F},P) \to N$ a positive integer--valued $\mathcal{F}$--measurable random variable. Denote $L(\omega) = \sum^{\infty}_{k=1} I_{(L=k)}(\omega) \cdot k$ for any $\omega \in \Omega$, $T^{(L)} : E \to E$ is defined by $T^{(L)}(x) =\sum^{\infty}_{k=1} \tilde{I}_{(L=k)} T^{(k)} (x)$ for any $x \in E$, where $T^{(k)}$ stands for the $k$--th iteration of $T$.
\par
When we study the existence and uniqueness of some $BSDEs$ as special cases of $BSEs$ of the form (\ref{1.3}), we need the following interesting generalization of Banach's contraction mapping principle (i.e., Theorem \ref{theorem2.13} below) since the contraction constant is random.
\par
Let us first recall from \cite{SS83} : an ordered pair $(E,d)$ is called a random metric space with base $(\Omega,\mathcal{F},P)$ if $E$ is a nonempty set and $d : E \times E \to L^0_+(\mathcal{F})$ satisfies the following three axioms: (i) $d(x,y) =0 \Leftrightarrow x=y$; (ii) $d(x,y) = d(y,x)$ for any $(x,y) \in E \times E$, and (iii) $d(x,z) \leq d(x,y) + d(y,z)$ for any $x,y$ and $z$ in $E$. In fact, the definition is essentially an equivalent variant of the original one of a random metric space introduced in \cite{SS83}. For any given positive number $\varepsilon$ and $\lambda$ such that $0 < \lambda < 1$, let $\mathcal{U}(\varepsilon,\lambda) = \{ (x,y) \in E \times E ~|~ P \{ \omega \in \Omega ~|~ d(x,y)(\omega) < \varepsilon \} > 1 - \lambda \}$, then the family $\{ \mathcal{U}(\varepsilon,\lambda) ~|~ \varepsilon >0 , 0 < \lambda < 1 \}$ forms a base for some metrizable uniform structure on $E$. Further, if the uniform structure is complete, then $(E,d)$ is said to be $\mathcal{T}_{\varepsilon,\lambda}$--complete. Similar to the proof of the classical Banach's contraction mapping principle, Proposition \ref{proposition2.12} below was proved in \cite{Guo89}, an application of which was given in \cite{Guo99}.
\begin{proposition} \cite{Guo89} \label{proposition2.12}
Let $(E,d)$ is a $\mathcal{T}_{\varepsilon,\lambda}$--complete random metric space with base $(\Omega,\mathcal{F},P)$ and $T : E \to E$ a mapping. If there are a positive integer $l \in N$ and $\xi \in L^0_+(\mathcal{F})$ satisfying $\xi <1$ on $\Omega$ such that $d(T^{(l)}(x),T^{(l)}(y)) \leq \xi d(x,y)$ for all $x$ and $y$ in $E$, then $T$ has a unique fixed point.
\end{proposition}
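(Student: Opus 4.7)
The plan is to follow the standard strategy for deducing a fixed point theorem for $T$ from one for its iterate $S := T^{(l)}$, adapting every estimate so that the contraction coefficient $\xi$ lives in $L^0_+(\mathcal{F})$ rather than in $[0,1)$.

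First I would prove existence and uniqueness of a fixed point of $S$ by Picard iteration. Fix any $x_0 \in E$ and set $x_n := S^n(x_0) = T^{(ln)}(x_0)$. By induction on $n$ and the standing hypothesis $d(S(y),S(z)) \leq \xi d(y,z)$,
\begin{equation*}
d(x_{n+1}, x_n) \leq \xi^n d(x_1,x_0),
\end{equation*}
and the triangle inequality gives, for $m \geq 1$,
\begin{equation*}
d(x_{n+m}, x_n) \leq \sum_{k=n}^{n+m-1} \xi^k d(x_1,x_0) \leq \frac{\xi^n}{1-\xi}\, d(x_1,x_0).
\end{equation*}
Since $\xi < 1$ on $\Omega$, $\xi^n \to 0$ almost surely and $1-\xi > 0$ a.s., so the right-hand side tends to $0$ almost surely, and a fortiori in the $(\varepsilon,\lambda)$-topology. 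Thus $(x_n)$ is $\mathcal{T}_{\varepsilon,\lambda}$-Cauchy, and by completeness it converges to some $x^* \in E$. The same $L^0$-Lipschitz estimate shows $S$ is $\mathcal{T}_{\varepsilon,\lambda}$-continuous, so passing to the limit in $S(x_n) = x_{n+1}$ yields $S(x^*) = x^*$. If $y^*$ is another fixed point of $S$, then $d(x^*,y^*) = d(S(x^*),S(y^*)) \leq \xi d(x^*,y^*)$, i.e., $(1-\xi) d(x^*,y^*) \leq 0$; since $1-\xi > 0$ on $\Omega$, this forces $x^* = y^*$.

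Finally I would transfer from $S$ to $T$. Because $T$ commutes with $S = T^{(l)}$,
\begin{equation*}
S(T(x^*)) = T^{(l+1)}(x^*) = T(T^{(l)}(x^*)) = T(S(x^*)) = T(x^*),
\end{equation*}
so $T(x^*)$ is a fixed point of $S$ and hence, by uniqueness, equals $x^*$. Conversely, any fixed point of $T$ is automatically a fixed point of $S$, and so equals $x^*$. The only genuinely new ingredient compared with the classical case is ensuring that $\frac{\xi^n}{1-\xi} d(x_1,x_0) \to 0$ in $\mathcal{T}_{\varepsilon,\lambda}$; this is the one place where the randomness of $\xi$ enters and is the main point to watch, but it is handled purely by the pointwise estimate $\xi(\omega) < 1$ a.s., so no stability or concatenation argument is needed.
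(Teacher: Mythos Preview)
Your proof is correct and follows exactly the approach the paper indicates: the paper does not write out a proof of Proposition~\ref{proposition2.12} but merely remarks that it ``was proved in \cite{Guo89}'' and is ``similar to the proof of the classical Banach's contraction mapping principle,'' which is precisely the Picard-iteration argument you give, followed by the standard commutativity trick to pass from the iterate $T^{(l)}$ back to $T$. The only point worth stressing is the one you already flag --- that $\frac{\xi^n}{1-\xi}\,d(x_1,x_0)\to 0$ in $\mathcal{T}_{\varepsilon,\lambda}$ because $\xi<1$ on $\Omega$ forces $\xi^n\to 0$ a.s.\ and both $\frac{1}{1-\xi}$ and $d(x_1,x_0)$ are a.s.\ finite --- and you handle it correctly.
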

\begin{theorem}\label{theorem2.13}
Let $(E,\|\cdot\|)$ be a $\mathcal{T}_{\varepsilon,\lambda}$--complete $RN$ module over $K$ with base $(\Omega,\mathcal{F},P)$, $G$ a  $\mathcal{T}_{\varepsilon,\lambda}$--closed stable subset of $E$ and $T : G \to G$ a stable mapping. If there exist some positive integer--valued $\mathcal{F}$--measurable random variable $L$ and some $\xi \in L^0_+(\mathcal{F})$ such that $\xi <1$ on $\Omega$ and $\|T^{(L)}(x) -T^{(L)}(y) \| \leq \xi \cdot \|x-y\|$ for any $x$ and $y$ in $G$, then $T$ has a unique fixed point in $G$.
\end{theorem}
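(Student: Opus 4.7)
The plan is to apply Proposition \ref{proposition2.12} directly to the auxiliary mapping $T^{(L)} : G \to G$ with iteration exponent $l = 1$, and then transfer the resulting fixed point back to $T$ using the stability of $T$. This avoids any attempt to iterate $T$ a ``random number of times'' and collapses the argument to the classical picture, with only stability-bookkeeping left to do.

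First I would verify that $T^{(L)}$ is a well-defined self-map of $G$. Each iterate $T^{(k)}$ sends $G$ into itself, and since $\{(L = k)\}_{k \geq 1}$ is a countable $\mathcal{F}$-partition of $\Omega$ and $G$ is stable, the element $T^{(L)}(x) = \sum_{k \geq 1} \tilde{I}_{(L=k)} T^{(k)}(x)$ lies in $G$ (convergence of the partial sums in $\mathcal{T}_{\varepsilon,\lambda}$ follows from $\mathcal{T}_{\varepsilon,\lambda}$-completeness of $E$ and $P(L > n) \to 0$, exactly as in the second half of the proof of Lemma \ref{lemma2.11}). The closed stable subset $G$ of the complete RN module $E$ inherits the structure of a $\mathcal{T}_{\varepsilon,\lambda}$-complete random metric space under $d(x,y) := \|x-y\|$, and by hypothesis $d(T^{(L)}(x), T^{(L)}(y)) \leq \xi \cdot d(x,y)$ with $\xi < 1$ on $\Omega$. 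Proposition \ref{proposition2.12} (taking $l = 1$) therefore yields a unique fixed point $x^{\ast} \in G$ of $T^{(L)}$.

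Next I would upgrade $x^{\ast}$ to a fixed point of $T$ via the commutation identity $T \circ T^{(L)} = T^{(L)} \circ T$. Directly from the definition, $T^{(L)}(T(x)) = \sum_k \tilde{I}_{(L=k)} T^{(k+1)}(x)$; on the other hand, stability of $T$ gives $T(T^{(L)}(x)) = T\bigl(\sum_k \tilde{I}_{(L=k)} T^{(k)}(x)\bigr) = \sum_k \tilde{I}_{(L=k)} T^{(k+1)}(x)$, so the two expressions agree. Applied at $x = x^{\ast}$, this yields $T^{(L)}(T(x^{\ast})) = T(T^{(L)}(x^{\ast})) = T(x^{\ast})$, so $T(x^{\ast})$ is also a fixed point of $T^{(L)}$, and uniqueness forces $T(x^{\ast}) = x^{\ast}$. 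Uniqueness for $T$ itself is then immediate: any fixed point $y$ of $T$ satisfies $T^{(k)}(y) = y$ for every $k$, hence $T^{(L)}(y) = \sum_k \tilde{I}_{(L=k)} y = y$, forcing $y = x^{\ast}$.

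I do not expect any genuine obstacle. The single conceptual observation is that one can apply Proposition \ref{proposition2.12} to $T^{(L)}$ itself with $l = 1$, rather than trying to make sense of ``$T^L$'' as a literal iteration; once this reduction is in hand, the remaining verifications ($T^{(L)}(G) \subseteq G$ and the commutation $T \circ T^{(L)} = T^{(L)} \circ T$) are routine consequences of stability of $T$ and $G$.
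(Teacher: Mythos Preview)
Your proposal is correct and follows exactly the same strategy as the paper's proof: apply Proposition~\ref{proposition2.12} to $T^{(L)}$ (with $l=1$) to obtain its unique fixed point, then use the commutation $T\circ T^{(L)}=T^{(L)}\circ T$, which holds by stability of $T$, to deduce that this fixed point is the unique fixed point of $T$. The paper's proof is a one-line sketch of precisely this argument, while you have spelled out the verifications (well-definedness of $T^{(L)}$ on $G$, the commutation identity, and uniqueness) in full.
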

\begin{proof}
One can easily see from Proposition \ref{proposition2.12} that $T^{(L)}$ has a unique fixed point $x^* \in G$, Since $T$ is stable, $T$ and $T^L$ are commutative, then $x^*$ is also a unique fixed point of $T$.
\end{proof}

\section{$L^0$--convex compactness, random normal structure and Browder--Kirk's fixed point theorem in $\mathcal{T}_{\varepsilon,\lambda}$--complete $RN$ modules}\label{section3}

Let $E$ be an $L^0(\mathcal{F},K)$--module. A nonempty subset $G$ of $E$ is said to be $L^0$--convex if $\xi x + (1-\xi)y \in G$ for any $x$ and $y$ in $G$ and for any $\xi \in L^0_+(\mathcal{F})$ such that $0 \leq \xi \leq 1$.
\begin{definition}\cite{GZWW17} \label{definition3.1}
Let $(E,\mathcal{T})$ be a Hausdorff topological module over the topological algebra $(L^0(\mathcal{F},K), \mathcal{T}_{\varepsilon,\lambda})$ (namely, $(E,\mathcal{T})$ is a Hausdorff linear topological space over $K$ and $E$ is also an $L^0(\mathcal{F},K)$--module such that the module multiplication operation $\cdot : (L^0(\mathcal{F},K),\mathcal{T}_{\varepsilon,\lambda}) \times (E,\mathcal{T}) \to (E,\mathcal{T})$ is continuous). A nonempty $L^0$--convex subset $G$  of $E$ is said to be $L^0$--convexly compact $($or, $G$ is said to have $L^0$--convex compactness$)$ if any family of $\mathcal{T}$--closed $L^0$--convex subsets of $G$ has the nonempty intersection whenever the family has the finite intersection property.
\end{definition}
\par
Let $(E,\|\cdot\|)$ be an $RN$ module over $K$ with base $(\Omega,\mathcal{F},P)$, then $(E,\mathcal{T}_{\varepsilon,\lambda})$ is a Hausdorff topological module over the topological algebra $(L^0(\mathcal{F},K),\mathcal{T}_{\varepsilon,\lambda})$ according to Proposition \ref{proposition2.6}. Theorem \ref{theorem3.2} below, which was recently proved in \cite{GZWW17}, gives a characterization for an $L^0$--closed convex subset of a complete $RN$ module to be $L^0$--convexly compact.
\par
Let $(E,\|\cdot\|)$ be a $\mathcal{T}_{\varepsilon,\lambda}$--complete $RN$ module over $K$ with base $(\Omega,\mathcal{F},P)$ and $1\leq p \leq +\infty$. Further, let $L^p(E) = \{ x \in E : \|x\|_p < +\infty \}$, where $\|x\|_p$ denotes the ordinary $L^p$--norm of $\|x\|$, namely $\|x\|_p = (\int_{\Omega} \|x\|^p dP)^{1/p}$ for $1\leq p < +\infty$ and $\|x\|_{\infty} = \inf\{ M \in [0,+\infty) ~|~ \|x\| \leq M \}$, then $(L^p(E),\|\cdot\|_p)$ is a Banach space over $K$. Let $E^{**}$ be its second random conjugate space, the canonical mapping $J: E \to E^{**}$ is defined by $J(x)(f) = f(x)$ for any $x \in E$ and $f \in E^*$, then $J$ is $L^0$--norm--preserving by the Hahn--Banach theorem for random linear functionals, if, in addition, $J$ is also surjective, then $(E,\|\cdot\|)$ is said to be random reflexive. In 1997, Guo proved in \cite{Guo97} that $(E,\|\cdot\|)$ is random reflexive if and only if $L^p(E)$ is reflexive for any given $p$ such that $1<p<+\infty$, which was further used by Guo and Li in 2005 in \cite{GL05} to prove that $(E,\|\cdot\|)$ is random reflexive if and only if each $f \in E^*$ can attain its $L^0$--norm on the random closed unit ball of $E$. Recently, it was proved in \cite{GZWW17} that $(E,\|\cdot\|)$ is random reflexive if and only if  each $\mathcal{T}_{\varepsilon,\lambda}$--closed $L^0$--convex and a.s. bounded subset of $E$ is $L^0$--convexly compact, where a subset $G$ of $E$ is said to be a.s. bounded if $\bigvee \{ \|g\| : g \in G \} \in L^0_+(\mathcal{F})$. In particular, we have obtained the following more general result:
\par
\begin{theorem}\label{theorem3.2}\cite{GZWW17}
Let $(E,\|\cdot\|)$ be a $\mathcal{T}_{\varepsilon,\lambda}$--complete $RN$ module over $K$ with base $(\Omega,\mathcal{F},P)$ and $G$ a $\mathcal{T}_{\varepsilon,\lambda}$--closed $L^0$--convex subset of $E$. Then $G$ is $L^0$--convexly compact if and only if for each $f \in E^*$ there exists $g_0 \in G$  such that $Re(f(g_0)) = \bigvee\{ Re(f(g)) : g \in G \}$.
\end{theorem}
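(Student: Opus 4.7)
The statement is the $RN$-module analogue of James' characterization of weak compactness, so my plan is to mirror that classical strategy, substituting $L^0$-convex compactness for weak compactness and $\bigvee$-suprema in $L^0(\mathcal{F})$ for scalar suprema. The forward implication should be a fairly direct consequence of Definition 3.1 applied to a family of approximate-maximizer slices; the reverse implication is the substantive direction, and I would route it through the classical James theorem on the associated Banach space $L^p(E)$.

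For the forward direction, fix $f \in E^*$ and set $\alpha := \bigvee\{\mathrm{Re}(f(g)) : g \in G\} \in \bar{L}^0(\mathcal{F})$. Because $G$ is $\mathcal{T}_{\varepsilon,\lambda}$-closed and $L^0$-convex in the stable, $\mathcal{T}_{\varepsilon,\lambda}$-complete module $E$, iterating two-term $L^0$-convex combinations (using $\tilde{I}_A \in [0,1]$) and passing to the $\mathcal{T}_{\varepsilon,\lambda}$-limit shows that $G$ is itself stable; hence $H := \{\mathrm{Re}(f(g)) : g \in G\}$ is a stable subset of $L^0(\mathcal{F})$. As a preliminary step I would rule out $\alpha \notin L^0(\mathcal{F})$ by noting that on $A := [\alpha = +\infty]$ the nested slices $\{g \in G : \mathrm{Re}(f(g)) \geq n \tilde{I}_A\}_{n \geq 1}$ are $\mathcal{T}_{\varepsilon,\lambda}$-closed, $L^0$-convex and nonempty but have empty intersection, contradicting $L^0$-convex compactness. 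With $\alpha \in L^0(\mathcal{F})$ in hand, for every $\xi \in L^0_{++}(\mathcal{F})$ I set
\[
C_\xi := \{g \in G : \mathrm{Re}(f(g)) \geq \alpha - \xi\}.
\]
Each $C_\xi$ is $\mathcal{T}_{\varepsilon,\lambda}$-closed and $L^0$-convex by continuity and linearity of $f$, and nonempty by Lemma 2.10 applied to the stable, bounded-above set $H$. Since $C_{\xi_1} \cap C_{\xi_2} \supseteq C_{\xi_1 \wedge \xi_2}$, the family has the finite intersection property, and $L^0$-convex compactness yields some $g_0 \in \bigcap_\xi C_\xi$; then $\mathrm{Re}(f(g_0)) \geq \alpha - \xi$ for every $\xi \in L^0_{++}(\mathcal{F})$, which forces $\mathrm{Re}(f(g_0)) = \alpha$.

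For the reverse direction, I would first apply the attainment hypothesis to both $f$ and $-f$ to conclude that $\{f(g) : g \in G\}$ is order-bounded in $L^0(\mathcal{F})$ for every $f \in E^*$, and then combine this with the Hahn--Banach representation $\|x\| = \bigvee\{|f(x)| : f \in E^*,\ \|f\| \leq 1\}$ (recalled before Proposition 2.6) and the stability of $G$ to conclude that $G$ is a.s. bounded. Next, I would lift $G$ to a bounded, norm-closed convex set $\widetilde{G}$ inside the Banach space $L^p(E)$ for some $1 < p < \infty$, use the identification of $(L^p(E))^*$ with a space of a.s. bounded module homomorphisms furnished by the random-reflexivity framework of \cite{Guo97,GL05} to turn each scalar functional on $L^p(E)$ into an element of $E^*$, and thereby verify the hypothesis of the classical James theorem for $\widetilde{G}$. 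Weak compactness of $\widetilde{G}$ should then transfer back to $L^0$-convex compactness of $G$ by Proposition 2.9 together with the stability of $G$. The \emph{main obstacle} is precisely this last translation: to promote a scalar-linear attainment hypothesis on $\widetilde{G}$ to the $L^0$-linear attainment hypothesis on $G$ that James' theorem ultimately needs, one must perform a countable-partition exhaustion, using Lemma 2.10 and the stability of $G$ to patch local maximizers on the atoms of a measurable partition into a single global $g_0 \in G$. Getting this patching to interact correctly with the norm of $L^p(E)$ and with the directed-upwards trick from Proposition 2.1 is, in my view, the delicate heart of the proof.
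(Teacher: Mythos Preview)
The paper does not prove Theorem~3.2 at all: it is quoted as a result of \cite{GZWW17}, and no argument is supplied in the present paper. So there is no ``paper's own proof'' to compare against; I can only evaluate your sketch on its merits.

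Your forward direction is essentially sound. One simplification: once you know $G$ is $L^0$--convexly compact, Lemma~2.19 of \cite{GZWW17} (invoked in the proof of Theorem~3.8 here) already gives that $G$ is a.s.\ bounded, so $\alpha = \bigvee\{\mathrm{Re}(f(g)):g\in G\}\le \|f\|\cdot \bigvee\{\|g\|:g\in G\}\in L^0(\mathcal{F})$ for free, and you can skip the separate argument ruling out $\alpha=+\infty$. The slice argument with Lemma~2.10 and the finite intersection property is fine.

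Your reverse direction has a genuine gap. You propose to ``lift $G$ to a bounded, norm--closed convex set $\widetilde G$ inside the Banach space $L^p(E)$'' and then apply the classical James theorem there. But a.s.\ boundedness of $G$ in $E$ does \emph{not} imply $G\subset L^p(E)$: the random bound $M:=\bigvee\{\|g\|:g\in G\}$ lies in $L^0_+(\mathcal{F})$, not in $L^p(\mathcal{F})$, so elements of $G$ need not have $p$--integrable $L^0$--norm. Without $G\subset L^p(E)$ there is no $\widetilde G$ to which the scalar James theorem applies, and the subsequent identification of $(L^p(E))^*$ and the transfer back are moot. Any rescaling trick (e.g.\ multiplying $G$ by $1/(1+M)$) destroys $L^0$--convexity or the attainment hypothesis, so this is not a cosmetic issue. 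The actual proof in \cite{GZWW17} has to work intrinsically in the $RN$--module, using the random conjugate space and random reflexivity characterizations directly rather than a one--shot lift to a single Banach space; you would need to rethink this direction along those lines.
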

\par
\begin{definition}\label{definition3.3}
Let $(E,\|\cdot\|)$ be a $\mathcal{T}_{\varepsilon,\lambda}$--complete $RN$ module over $K$ with base $(\Omega,\mathcal{F},P)$. A $\mathcal{T}_{\varepsilon,\lambda}$--closed $L^0$--convex subset $G$ of $E$ is said to have random normal structure if for each a.s. bounded $\mathcal{T}_{\varepsilon,\lambda}$--closed $L^0$--convex subset $H$ of $G$ such that $D(H) := \bigvee\{ \|x-y\| : x,y \in H \}>0$, there exists a nondiametral point $h$ of $H$, namely $\bigvee\{ \|h-x\| : x \in H \} < D(H)$ on $(D(H)>0)$. $D(H)$ is called the random diameter of $H$.
\end{definition}
\par
To study random normal structure, let $(E,\|\cdot\|)$ be a $\mathcal{T}_{\varepsilon,\lambda}$--complete $RN$ module over $K$ with base $(\Omega,\mathcal{F},P)$ and $\xi = \bigvee \{ \|x\| : x \in E \}$, since $E$ is an $L^0(\mathcal{F},K)$-- module, for an arbitrarily chosen representative $\xi^0$ of $\xi$, it always holds that $P\{ \omega \in \Omega ~|~ \xi^0(\omega) = + \infty$ or $0 \} = 1$, let $supp(E) = \{ \omega \in E~|~ \xi^0(\omega) = + \infty \}$, then $supp(E)$ is called the support of $E$ ($supp(E)$ is unique a.s.). If $P(supp(E)) =1$, then $E$ is said to have full support.

\begin{lemma}\label{lemma3.4}
Let $(E,\|\cdot\|)$ be a $\mathcal{T}_{\varepsilon,\lambda}$--complete $RN$ module over $K$ with base $(\Omega,\mathcal{F},P)$ such that $P(sup p(E))>0($otherwise, $E$ only consists of the null $\theta)$ and $A\in \mathcal{F}$ such that $A\subset supp(E)$ and $P(A)>0$. Then we have the following assertions:
\begin{enumerate}[(1)]
\item There exists $x\in E$ such that $\|x\|=\tilde{I}_A$ and $x=\tilde{I}_Ax$.
\item Let $E_A=\tilde{I}_AE:=\{\tilde{I}_Ax: x\in E\}$ and $\|\cdot\|_A: E_A\rightarrow L^0_+(A\cap \mathcal{F})$ be defined by $\|y\|_A:=$ the limitation of $\|y\|$ to $A$, where $A\cap \mathcal{F}=\{A\cap F: F\in \mathcal{F}\}$. Then $(A,A\cap \mathcal{F},P_A)$ is a probability space, where $P_A(A\cap F)=P(A\cap F)/P(A)$ for any $F\in \mathcal{F}$. Further, for any given $\xi \in L^0(A\cap F,K)$, let $\xi^0$ be an arbitrarily chosen representative of $\xi$, $\xi^0_A: \Omega\rightarrow K$ is defined by $\xi^0_A(\omega)=\xi^0(\omega)$ for any $\omega \in A$, and $\xi^0_A(\omega)=0$ for any $\omega \in A^c$, and $\xi_A$ stands for the equivalence of $\xi^0_A$. Define $\pi: L^0(A\cap \mathcal{F},K)\rightarrow \tilde{I}_AL^0(\mathcal{F},K):= \{\tilde{I}_A\eta: \eta\in L^0(\mathcal{F},K)\}$ by $\pi(\xi)=\xi_A$ for any $\xi\in L^0(A\cap \mathcal{F},K)$, and define the module multiplication operation $*: L^0(A\cap\mathcal{F},K)\times E_A\rightarrow E_A$ by $\xi * x=\pi(\xi)\cdot x$, then it is easy to verify the following facts: $(i)$ $\pi$ is an algebraic isomorphism from $L^0(A\cap\mathcal{F},K)$ onto $\tilde{I}_AL^0(\mathcal{F},K)$; $(ii)$ since $E_A$ is a $\tilde{I}_AL^0(\mathcal{F},K)$--module under the original module multiplication, $E_A$ is an $L^0(A\cap\mathcal{F},K)$--module under the induced module multiplication $*$; $(iii)$ $(E_A,\|\cdot\|_A)$ is a $\mathcal{T}_{\varepsilon,\lambda}$--complete $RN$ module over $K$ with base $(A,A\cap\mathcal{F},P_A)$ such that $(E_A,\|\cdot\|_A)$ has full support, where the module structure of $E_A$ is defined by the module multiplication $*$.
\end{enumerate}
\end{lemma}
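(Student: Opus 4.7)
The plan is to focus on part (1), which contains the substantive content; part (2) then reduces to routine module-theoretic and topological verifications that I would handle at the end.

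For (1), the strategy is to upgrade the pointwise information encoded in $A \subset \operatorname{supp}(E)$ to a \emph{single} element $u \in E$ whose norm is uniformly bounded below on $A$, and then to renormalize. The engine of the upgrade is the stability of $E$ (available via Remark~\ref{remark2.8}, since $E$ is $\mathcal{T}_{\varepsilon,\lambda}$-complete) combined with Lemma~\ref{lemma2.10}. Concretely, I would first introduce
\[
H := \{\|u\| \wedge \tilde{I}_A : u \in E\} \subset L^0_+(\mathcal{F}),
\]
and verify that $H$ is stable: given a partition $\{A_n\}$ in $\mathcal{F}$ and $u_n \in E$, the concatenation $u := \sum_n \tilde{I}_{A_n} u_n$ exists in $E$, satisfies $\|u\| = \sum_n \tilde{I}_{A_n}\|u_n\|$, and hence $\sum_n \tilde{I}_{A_n}(\|u_n\|\wedge \tilde{I}_A) = \|u\|\wedge \tilde{I}_A \in H$. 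Clearly $\tilde{I}_A$ is an upper bound for $H$, and the defining property of $\operatorname{supp}(E)$ together with the fact that the operation $\wedge \tilde{I}_A$ commutes with upward-directed suprema forces $\bigvee H = \tilde{I}_A$. Applying Lemma~\ref{lemma2.10} with $\varepsilon := \tfrac{1}{2}\tilde{I}_A + \tilde{I}_{A^c} \in L^0_{++}(\mathcal{F})$ produces some $u \in E$ with $\|u\| > \tfrac{1}{2}$ on $A$. Setting $\eta := \tilde{I}_A(\|u\| + \tilde{I}_{A^c})^{-1} \in L^0(\mathcal{F})$ (the inverse is well defined because the denominator is $\geq \tfrac{1}{2}$ on $\Omega$) and $x := \eta u$, a direct computation yields $\|x\| = |\eta|\|u\| = \tilde{I}_A$ and $x = \tilde{I}_A x$, as required.

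For part (2), assertions (i) and (ii) are immediate from the definitions: $\pi$ is an algebraic isomorphism because it respects pointwise arithmetic on $A$, and the $L^0(A\cap \mathcal{F},K)$-module structure on $E_A$ via $\ast$ is just the original $\tilde{I}_A L^0(\mathcal{F},K)$-module structure transported through $\pi$. For (iii), the $L^0$-norm axioms for $\|\cdot\|_A$ on $E_A$ are the restrictions to $A$ of those for $\|\cdot\|$ on $E$; $\mathcal{T}_{\varepsilon,\lambda}$-completeness of $(E_A, \|\cdot\|_A)$ over $(A,A\cap\mathcal{F},P_A)$ follows from that of $(E,\|\cdot\|)$ because a Cauchy sequence in $E_A$ remains Cauchy in $E$ (the two probability measures differ only by the normalizing constant $P(A)$, and elements of $E_A$ vanish off $A$), so its $E$-limit automatically lies in $E_A$. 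Finally, full support of $E_A$ over $(A,A\cap\mathcal{F},P_A)$ follows by applying part (1) to any $A' \in \mathcal{F}$ with $A' \subset A$ and $P(A') > 0$, which produces an element of $E_A$ whose $\|\cdot\|_A$ is $\tilde{I}_{A'}$.

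The main obstacle is precisely in part (1): the support hypothesis is essentially a pointwise-almost-everywhere statement, whereas the conclusion demands a \emph{globally defined} $x \in E$ with exact norm $\tilde{I}_A$. Stability, together with the approximation of essential suprema by elements of the set afforded by Lemma~\ref{lemma2.10}, is exactly what bridges this gap.
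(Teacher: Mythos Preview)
Your argument is correct, but the route you take for part~(1) differs from the paper's. The paper simply invokes \cite[Lemma~3.1]{Guo08}, which directly furnishes an element $z \in E$ with $\|z\| = \tilde{I}_{\operatorname{supp}(E)}$; then $x := \tilde{I}_A z$ satisfies $\|x\| = \tilde{I}_A \tilde{I}_{\operatorname{supp}(E)} = \tilde{I}_A$ (because $A \subset \operatorname{supp}(E)$) and $x = \tilde{I}_A x$ trivially. Your approach instead builds the element from scratch using only tools internal to the paper: you form the stable set $H = \{\|u\| \wedge \tilde{I}_A : u \in E\}$, identify $\bigvee H = \tilde{I}_A$ via the definition of the support, apply Lemma~\ref{lemma2.10} to extract a single $u$ with $\|u\| > \tfrac{1}{2}$ on $A$, and then renormalize. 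This is essentially a reconstruction of the cited external lemma in the special case needed here. The paper's version is shorter by outsourcing the work; your version is self-contained and makes transparent exactly how stability bridges the gap between the pointwise support condition and the existence of a global witness. For part~(2), you supply more detail than the paper, which only records that the element $x$ from~(1) has $\|x\|_A = 1$ and hence (after scaling by positive integers) forces full support; the remaining verifications are, as both you and the paper indicate, routine.
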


\begin{proof}
\begin{enumerate}[(1)]
\item It follows from \cite[Lemma3.1]{Guo08} that there exists $z\in E$ such that $\|z\|=\tilde{I}_{supp(E)}$, then $x:=\tilde{I}_Az$ satisfies (1).
\item It is obvious that $\|x\|_A=1$, and hence $(E_A,\|\cdot\|_A)$ has full support.
\end{enumerate}
\end{proof}

\par
By $(2)$ of Lemma \ref{lemma3.4}, we can, without loss of generality, assume that $(E,\|\cdot\|)$ has full support $($otherwise, we consider $(E_{supp(E)},\|\cdot\|_{supp(E)})$ in the place of $(E,\|\cdot\|) )$ in the following study of random uniform convexity. Geometry of complete $RN$ modules began with Guo and Zeng's paper \cite{GZ10}, we employ the following notation: \\
$\varepsilon_\mathcal{F}[0,2]=\{\varepsilon\in L^0_{++}(\mathcal{F})~|$ there exists a positive number $\lambda$ such that $\lambda\leq\varepsilon\leq 2\}$.\\
$\delta_{\mathcal{F}}[0,1]=\{\delta\in L^0_{++}(\mathcal{F})~|$ then exists a positive number $\eta$ such that $\eta \leq \delta\leq 1\}$.

\par
For a $\mathcal{T}_{\varepsilon,\lambda}$--complete $RN$ module $(E,\|\cdot\|)$ over $K$ with base $(\Omega,\mathcal{F},P)$, $A_x=(\|x\|>0)$ for any $x\in E$ and $B_{x,y}=A_x\cap A_y\cap A_{x-y}$ for any $x$ and $y$ in $E$.

\begin{definition}\label{definition3.5}\cite{GZ10}
Let $(E,\|\cdot\|)$ be a $\mathcal{T}_{\varepsilon,\lambda}$--complete $RN$ module over $K$ with base $(\Omega,\mathcal{F},P)$ such that $(E,\|\cdot\|)$ has full support. $E$ is said to be random uniformly convex if for each $\varepsilon\in \varepsilon_\mathcal{F}[0,2]$ there exists $\delta\in \delta_{\mathcal{F}}[0,1]$ such that $\|x-y\|\geq \varepsilon$ on $D$ always implies $\|x+y\|\leq 2(1-\delta)$ on $D$ for any $x$ and $y\in U(1)$ and any $D\in \mathcal{F}$ such that $D\subset B_{x,y}$ and $P(D)>0$, where $U(1)=\{z\in E~|~\|z\|\leq 1\}$, called the random closed unit ball of $E$.
\end{definition}

\par
Definition \ref{definition3.5} is different from the original Definition4.1 of \cite{GZ10}, but they are equivalent since they are both equivalent to $(5)$ of \cite[Theorem4.1]{GZ10}. The definition of random uniform convexity is fruitful since it can make us prove that $(E,\|\cdot\|)$ is random uniformly convex if and only if $L^p(E)$ is uniformly convex for some $p$ such that $1<p<+\infty$ in \cite{GZ10,GZ12}. Thus $L^p_{\mathcal{F}_0}(\mathcal{F}_T,R^d)$ is random uniformly convex for any $p$ such that $1<p<+\infty$ since when $E=L^p_{\mathcal{F}_0}(\mathcal{F}_T,R^d)$, ~$L^p(E)$ is exactly $L^p(\mathcal{F}_T,R^d)$, which is always a uniformly convex Banach space. Besides, it is also proved that $L^0(\mathcal{F},B)$ is random uniformly convex if and only if $B$ is a uniformly convex Banach space in \cite{GZ10}. Finally, Zeng's paper \cite{Z13} gave a nice discussion on random convexity modulus.

\par
\begin{theorem}\label{theorem3.6}
Let $(E,\|\cdot\|)$ be a $\mathcal{T}_{\varepsilon,\lambda}$--complete $RN$ module over $K$ with base $(\Omega,\mathcal{F},P)$ such that $(E,\|\cdot\|)$ has full support and is random uniformly convex. Then every $\mathcal{T}_{\varepsilon,\lambda}$--closed $L^0$--convex subset $G$ of $E$ has random normal structure.
\end{theorem}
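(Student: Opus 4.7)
The plan is to randomize the classical Brodskii--Milman argument that a uniformly convex space has normal structure: given an a.s.\ bounded, $\mathcal{T}_{\varepsilon,\lambda}$--closed, $L^0$--convex subset $H$ of $G$ with random diameter $d:=D(H)$ positive on $A:=(d>0)$, I will pick two nearly diametral points $x_0,y_0\in H$, take their midpoint $m$, and show via random uniform convexity that $\bigvee\{\|m-z\|:z\in H\}<d$ on $A$ with a uniform-in-$z$ deficit. As preparation I would first observe that every $\mathcal{T}_{\varepsilon,\lambda}$--closed $L^0$--convex subset of $E$ is stable: the finite--partition case is an $L^0$--convex combination, and the countable case follows by $\mathcal{T}_{\varepsilon,\lambda}$--closedness as the limit of the truncations $\tilde{I}_{A_1\cup(\cup_{n>k}A_n)}x_1+\sum_{n=2}^{k}\tilde{I}_{A_n}x_n\in H$. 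In particular $H$ and $\{\|x-y\|:x,y\in H\}\subset L^0(\mathcal{F})$ are both stable, so applying Lemma~\ref{lemma2.10} with $\varepsilon_0:=\tilde{I}_A\cdot d/4+\tilde{I}_{A^c}\in L^0_{++}(\mathcal{F})$ produces $x_0,y_0\in H$ with $\|x_0-y_0\|>d-\varepsilon_0$ on $\Omega$, which on $A$ reads $\|x_0-y_0\|>3d/4$. Set $m:=(x_0+y_0)/2\in H$ by $L^0$--convexity.

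For any $z\in H$, put $d':=d+\tilde{I}_{A^c}\in L^0_{++}(\mathcal{F})$, $u:=(x_0-z)/d'$, $v:=(y_0-z)/d'$. Then $u,v\in U(1)$; on $A$ one has $\|u-v\|=\|x_0-y_0\|/d>3/4$ and $u+v=2(m-z)/d$. Split $A=D_z\cup(A\setminus D_z)$ with $D_z:=A\cap A_u\cap A_v$. On $A\setminus D_z$, the vanishing of $u$ or $v$ forces $z=x_0$ or $z=y_0$ there, giving the free bound $\|m-z\|\leq\|x_0-y_0\|/2\leq d/2$. If $P(D_z)>0$, then since $\|u-v\|>0$ on $A\supset D_z$ we have $D_z\subset B_{u,v}$, and random uniform convexity with the constant $\varepsilon:=3/4\in\varepsilon_\mathcal{F}[0,2]$ provides a $\delta\in\delta_\mathcal{F}[0,1]$ depending only on $\varepsilon$ such that $\|u+v\|\leq 2(1-\delta)$ on $D_z$, i.e., $\|m-z\|\leq d(1-\delta)$ on $D_z$. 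Since $\delta$ is bounded below by a positive constant $\eta$, one gets $\|m-z\|\leq d(1-\eta')$ on $A$, where $\eta':=\min(1/2,\eta)>0$ is independent of $z$. Passing to the supremum yields $\bigvee\{\|m-z\|:z\in H\}\leq d(1-\eta')<d$ on $A$, so $m$ is nondiametral, proving $G$ has random normal structure.

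The main technical obstacle is the support condition $D\subset B_{u,v}$ in Definition~\ref{definition3.5}: the normalized vectors $u,v$ may vanish on nontrivial parts of $A$ whenever $z$ happens to coincide with $x_0$ or $y_0$ there, which blocks a naive direct application of random uniform convexity on all of $A$. The decomposition $A=D_z\cup(A\setminus D_z)$ absorbs this: on the \emph{bad} piece one collects the trivial estimate $\|m-z\|\leq d/2$, on the \emph{good} piece one invokes random uniform convexity, and the interaction of the two bounds with the constant lower bound on $\delta$ built into $\delta_\mathcal{F}[0,1]$ is precisely what yields a single uniform deficit $\eta'$ and hence the strict inequality $\bigvee_z\|m-z\|<d$ rather than only $\leq d$.
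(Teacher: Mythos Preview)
Your proof is correct and follows essentially the same Brodskii--Milman midpoint argument as the paper: pick nearly diametral $x_0,y_0$ via stability of $\{\|x-y\|:x,y\in H\}$ and Lemma~\ref{lemma2.10}, take $m=(x_0+y_0)/2$, normalize by the diameter, split according to whether $u,v$ vanish, and combine the uniform convexity bound with the trivial $d/2$ bound using a single deficit $\min(\eta,1/2)$. The only cosmetic difference is that the paper first passes to the restricted module $(E_A,\|\cdot\|_A)$ of Lemma~\ref{lemma3.4} to assume $D(H)>0$ on all of $\Omega$, whereas you carry the set $A=(D(H)>0)$ explicitly (and correspondingly use $d':=d+\tilde I_{A^c}$ to avoid division by zero); both routes lead to the same estimate.
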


\begin{proof}
For any given $\mathcal{T}_{\varepsilon,\lambda}$--closed and a.s. bounded $L^0$--convex subset $H$ of $G$ such that $D(H):=\bigvee\{\|x-y\|: x,y\in H\}>0$, we can, without loss of generality, assume that $D(H)>0$ on $\Omega$, namely $D(H)\in L^0_{++}(\mathcal{F})($otherwise, let $A=(D(H)>0)$, we can consider $(E_A,\|\cdot\|_A)($see Lemma \ref{lemma3.4}$)$, then for $E_A$ we have: $D(H)\in L^0_{++}(A\cap\mathcal{F}))$.
\par
Since $E\times E$ is a $\mathcal{T}_{\varepsilon,\lambda}$--complete $RN$ module with the $L^0$--norm $\|(x,y)\|=\|x\|+\|y\|$ for any $(x,y)\in E\times E$, $H\times H$ is $\mathcal{T}_{\varepsilon,\lambda}$--closed and $L^0$--convex, it is very easy to see $H\times H$ is stable. Further, the mapping $f: H\times H \rightarrow L^0(\mathcal{F})$ defined by $f(x,y)=\|x-y\|$ for any $x$ and $y$ in $H$ is clearly $L^0$--Lipschitzian, then $f(H\times H)=\{\|x-y\|: (x,y)\in H\times H\}$ is stable by Lemma\ref{lemma2.11}. In Lemma\ref{lemma2.10}, taking $\varepsilon=\frac{D(H)}{2}$ yields that there exists $(x,y)\in H\times H$ such that $\|x-y\|>D(H)-\frac{D(H)}{2}=\frac{1}{2}D(H)$ on $\Omega$.

\par
Let $z=\frac{1}{2} (x+y)$, then $z-h=\frac{1}{2}[(x-h)+(y-h)]$ for each $h\in H$, further let $u_1=\frac{x-h}{D(H)}$ and $u_2=\frac{y-h}{D(H)}$, then $\|u_1\|\leq 1$, $\|u_2\|\leq 1$ and $\|u_2-u_1\|=\frac{\|x-y\|}{D(H)}\geq \frac{1}{2}$. In Definition\ref{definition3.5}, taking $\varepsilon=\frac{1}{2}$ yields that there exists $\delta\in \delta_{\mathcal{F}}[0,1]$ such that $\|v_1-v_2\|\geq \varepsilon$ on $D$ implies $\frac{\|v_1+v_2\|}{2}\leq 1-\delta$ on $D$ for any $v_1$ and $v_2$ in $U(1):= \{z\in E~|~\|z\|\leq 1\}$ and any $D\in \mathcal{F}$ such that $D\subset B_{v_1,v_2}$ and $P(D)>0$. Taking $v_1=u_1$ and $v_2=u_2$, then at this time $A_{u_1-u_2}:= (\|u_1-u_2\|>0)=\Omega$ and hence $B_{u_1,u_2}=A_{u_1}\cap A_{u_2}\cap A_{u_1-u_2}=A_{u_1}\cap A_{u_2}$. Further, let $D=A_{u_1}\cap A_{u_2}$, then it is obvious that $\|u_1-u_2\|\geq \frac{1}{2}$ on $D$, then $\frac{\|u_1+u_2\|}{2}\leq 1-\delta$ on $D$. On the other hand, it is also obvious that $\frac{\|u_1+u_2\|}{2}\leq \frac{1}{2}=1-\frac{1}{2}$ on $(A_{u_1}\cap A_{u_2})^c$.

\par
Finally, let $\hat{\delta}=\delta\wedge \frac{1}{2}$, then $\hat{\delta}\in \delta_{\mathcal{F}}[0,1]$ is such that $\frac{\|u_1+u_2\|}{2}\leq 1-\hat{\delta}$, so $\frac{\|z-h\|}{D(H)}=\|\frac{u_1+u_2}{2}\|\leq 1-\hat{\delta}$, i.e., $\|z-h\|\leq(1-\hat{\delta})D(H)$ for each $h\in H$, which shows that $\bigvee\{\|z-h\|: h\in H\}\leq (1-\hat{\delta})D(H)<D(H)$ on $\Omega$, namely $z$ is a nondiametral point of $H$.
\end{proof}

\begin{corollary}\label{corollary3.7}
Let $(B,\|\cdot\|)$ be a uniformly convex Banach space, $V$ a closed convex subset of $B$ and $L^0(\mathcal{F},V)$ the set of equivalence classes of $V$--valued strongly $\mathcal{F}$--measurable functions on $(\Omega, \mathcal{F}, P)$. Then $L^0(\mathcal{F},V)$, as a $\mathcal{T}_{\varepsilon,\lambda}$--closed $L^0$--convex subset of $L^0(\mathcal{F},B)$, has random normal structure.
\end{corollary}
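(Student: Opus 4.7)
The plan is to derive the corollary as a direct consequence of Theorem \ref{theorem3.6} applied with $E = L^0(\mathcal{F}, B)$ and $G = L^0(\mathcal{F}, V)$. The work reduces to verifying three properties of $E$ (complete $RN$ module, full support, random uniformly convex) and two properties of $G$ ($L^0$-convex and $\mathcal{T}_{\varepsilon,\lambda}$-closed); each verification is routine once the results already recorded in the paper are invoked.

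First I would check the three hypotheses on $E = L^0(\mathcal{F},B)$. That it is an $RN$ module was established in Example \ref{example2.3}, and its $\mathcal{T}_{\varepsilon,\lambda}$-completeness is the classical fact that strongly measurable $B$-valued random elements are complete under convergence in probability when $B$ is a Banach space. For full support: one may discard the trivial case $B = \{\theta\}$, so pick $b \in B$ with $\|b\| > 0$ and observe that the constants $\{n \cdot b : n \in N\} \subset L^0(\mathcal{F},B)$ satisfy $\|n b\| = n\|b\| \to +\infty$, whence $\bigvee\{\|x\| : x \in L^0(\mathcal{F},B)\} = +\infty$ on $\Omega$. Random uniform convexity of $L^0(\mathcal{F},B)$ is precisely the result from \cite{GZ10} quoted immediately after Definition \ref{definition3.5}, which says that $L^0(\mathcal{F},B)$ is random uniformly convex if and only if $B$ is uniformly convex.

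Next I would verify that $G := L^0(\mathcal{F},V)$ is an $L^0$-convex and $\mathcal{T}_{\varepsilon,\lambda}$-closed subset of $L^0(\mathcal{F},B)$. For $L^0$-convexity, given $x,y \in L^0(\mathcal{F},V)$ and $\xi \in L^0_+(\mathcal{F})$ with $0 \le \xi \le 1$, any representatives satisfy $\xi^0(\omega)x^0(\omega) + (1-\xi^0(\omega))y^0(\omega) \in V$ for almost every $\omega$ by the ordinary convexity of $V$ in $B$, so $\xi x + (1-\xi)y \in L^0(\mathcal{F},V)$. For $\mathcal{T}_{\varepsilon,\lambda}$-closedness, if $\{x_n\} \subset L^0(\mathcal{F},V)$ converges in $\mathcal{T}_{\varepsilon,\lambda}$ to some $x \in L^0(\mathcal{F},B)$, then $\|x_n - x\| \to 0$ in probability, so some subsequence converges to $x$ almost surely in $B$; since $V$ is norm-closed in $B$, the pointwise limit lies in $V$ a.s., giving $x \in L^0(\mathcal{F},V)$.

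Theorem \ref{theorem3.6} now applies and yields the random normal structure of $L^0(\mathcal{F},V)$. I do not expect a genuine obstacle: the structural content has already been packaged into Theorem \ref{theorem3.6} together with the characterization of random uniform convexity of $L^0(\mathcal{F},B)$ from \cite{GZ10}, and the corollary is essentially an application step.
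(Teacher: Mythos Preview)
Your proposal is correct and follows essentially the same approach as the paper: invoke Theorem \ref{theorem3.6} together with the fact (quoted from \cite{GZ10}) that $L^0(\mathcal{F},B)$ is random uniformly convex. The paper's own proof is a one-line appeal to these two ingredients, while you have additionally spelled out the routine verifications (full support, $L^0$-convexity and $\mathcal{T}_{\varepsilon,\lambda}$-closedness of $L^0(\mathcal{F},V)$) that the paper leaves implicit.
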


\begin{proof}
It follows from Theorem\ref{theorem3.6} and the fact that $L^0(\mathcal{F},B)$ is random uniformly convex.
\end{proof}
\par
The idea of proof of Theorem \ref{theorem3.8} is very similar to that of the classical Browder--Kirk's fixed point theorem \cite[p.486]{K83}, but it is nontrivial since it can apply to complete $RN$ modules, which are a kind of more general spaces than Banach spaces and in particular include a wide class of not locally convex spaces.
\begin{theorem}\label{theorem3.8}
Let $(E,\|\cdot\|)$ be a $\mathcal{T}_{\varepsilon,\lambda}$--complete $RN$ module over $K$ with base $(\Omega, \mathcal{F}, P)$ and $G$ a $\mathcal{T}_{\varepsilon,\lambda}$--closed $L^0$--convex subset of $E$ such that $G$ is an $L^0$--convexly compact subset with random normal structure. Then a nonexpansive mapping $T$ form $G$ to $G$ $($namely $\|T(x)-T(y)\|\leq \|x-y\|$ for any $x$ and $y\in G)$ has a fixed point.
\end{theorem}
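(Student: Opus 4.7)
The plan is to adapt Kirk's classical proof of the Browder--Kirk theorem to the complete $RN$ module setting, replacing weak compactness by $L^0$-convex compactness and ordinary normal structure by random normal structure. The key observations are that $L^0$-convex compactness is exactly strong enough to run Zorn's lemma on the usual lattice of $T$-invariant $\mathcal{T}_{\varepsilon,\lambda}$-closed $L^0$-convex subsets, and that the $L^0_+$-valued Chebyshev radius supplied by random normal structure provides the finishing contradiction.

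First I would apply Zorn's lemma to the family
\[
\mathcal{S} = \{H \subseteq G : H \neq \emptyset,\, H \text{ is } \mathcal{T}_{\varepsilon,\lambda}\text{-closed, } L^0\text{-convex, and } T(H) \subseteq H\},
\]
partially ordered by reverse inclusion. $\mathcal{S}$ is nonempty since $G \in \mathcal{S}$, and for any chain $\{H_\alpha\}$ in $\mathcal{S}$ the sets $H_\alpha$ form a family of $\mathcal{T}_{\varepsilon,\lambda}$-closed $L^0$-convex subsets of $G$ with the finite intersection property; $L^0$-convex compactness of $G$ therefore gives $\bigcap_\alpha H_\alpha \neq \emptyset$, and this intersection is itself $\mathcal{T}_{\varepsilon,\lambda}$-closed, $L^0$-convex and $T$-invariant, hence an upper bound in $\mathcal{S}$. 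Zorn then furnishes a minimal element $K \in \mathcal{S}$, and a standard argument shows $K = \overline{\mathrm{co}}^{L^0}(T(K))$: the right-hand side is a $\mathcal{T}_{\varepsilon,\lambda}$-closed $L^0$-convex nonempty subset of $K$ which is $T$-invariant (for $x$ in it one has $T(x) \in T(K) \subseteq \overline{\mathrm{co}}^{L^0}(T(K))$), so by minimality it equals $K$.

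Suppose now, for contradiction, that $D(K) > 0$ on some $A \in \mathcal{F}$ with $P(A) > 0$. I would first argue that $K$ is a.s. bounded, since otherwise Theorem \ref{theorem3.2} would fail for some $f \in E^*$, violating the $L^0$-convex compactness inherited by $K$. Random normal structure of $G$ then supplies a nondiametral point $x_0 \in K$ for which $r := \bigvee\{\|x_0 - y\| : y \in K\} \in L^0_+(\mathcal{F})$ satisfies $r < D(K)$ on $A$. Define
\[
A_r := \{x \in K : \|x - y\| \leq r \text{ for every } y \in K\}.
\]
Then $A_r$ is nonempty (it contains $x_0$), $\mathcal{T}_{\varepsilon,\lambda}$-closed and $L^0$-convex. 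The crucial verification is $T$-invariance of $A_r$: for $x \in A_r$, nonexpansiveness gives $\|T(x) - T(y)\| \leq \|x - y\| \leq r$ for every $y \in K$, so $T(K) \subseteq B(T(x), r) := \{z \in E : \|z - T(x)\| \leq r\}$; since $B(T(x), r)$ is $\mathcal{T}_{\varepsilon,\lambda}$-closed and $L^0$-convex and $K = \overline{\mathrm{co}}^{L^0}(T(K))$, it follows that $K \subseteq B(T(x), r)$, whence $T(x) \in A_r$. Minimality of $K$ now forces $A_r = K$, which yields $D(K) = \bigvee\{\|x - y\| : x, y \in K\} \leq r$, contradicting $r < D(K)$ on $A$. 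Therefore $D(K) = 0$ almost surely, so $K$ is a singleton and its unique element is a fixed point of $T$.

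The step I expect to be most delicate is verifying that the minimal set $K$ is indeed a.s. bounded (so that the random normal structure hypothesis actually applies to it) and pushing the Chebyshev-type inclusion $T(K) \subseteq B(T(x), r)$ through to $K \subseteq B(T(x), r)$; this last step is exactly where the identity $K = \overline{\mathrm{co}}^{L^0}(T(K))$, and thereby the minimality delivered by Zorn's lemma via $L^0$-convex compactness, earns its keep.
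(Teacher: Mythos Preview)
Your proposal is correct and follows essentially the same route as the paper's proof: Zorn's lemma on the family of nonempty $\mathcal{T}_{\varepsilon,\lambda}$-closed $L^0$-convex $T$-invariant subsets (using $L^0$-convex compactness for the chain condition), minimality to obtain $K=\overline{\mathrm{co}}^{L^0}(T(K))$, and the Chebyshev-center set $A_r$ together with random normal structure to force $K$ to be a singleton. For the point you flagged as delicate, the paper does not argue via Theorem~\ref{theorem3.2} but simply invokes Lemma~2.19 of \cite{GZWW17}, which states directly that an $L^0$-convexly compact set is a.s.\ bounded; applying this to $G$ (hence to $K\subset G$) settles the issue without further work.
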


\begin{proof}
 Let $M=\{H: H$ is a nonempty $\mathcal{T}_{\varepsilon,\lambda}$--closed $L^0$--convex subset of $G$ such that $T(H)\subset H\}$, then $M$ is a partially ordered set under the usual inclusion relation. By the $L^0$--convex compactness of $G$, the Zorn's Lemma guarantees the existence of a minimal element $H_0$ in $M$.
\par
Lemma2.19 of \cite{GZWW17} shows that $G$ is a.s. bounded, so is $H_0$. We assert that $H_0$ is a singleton: otherwise, $D(H_0):= \bigvee\{\|h_1-h_2\|: h_1,h_2\in H_0\}>0$, let $A=(D(H_0)>0)$, then $P(A)>0$. Since $G$ has random normal structure, there exists $z\in H_0$ such that $r:= \bigvee\{\|z-h\|: h\in H_0\}<D(H_0)$ on $A$, let $H_1=\{y\in H_0: H_0\subset B(y,r)\}$, where $B(y,r)=\{x\in E: \|x-y\|\leq r\}$, then $H_1\neq \emptyset$ since $z\in H_1$. Let $y$ be any given element in $H_1$, then $T(H_0)\subset B(T(y),r)$ by the nonexpansiveness of $T$, and hence $\overline{Conv_{L^0}(T(H_0))}\subset B(T(y),r)$, where $\overline{Conv_{L^0}(T(H_0))}$ stands for the $\mathcal{T}_{\varepsilon,\lambda}$--closed $L^0$--convex hull of $T(H_0)$. Since $H_0$ is a $\mathcal{T}_{\varepsilon,\lambda}$--closed $L^0$--convex subset of $G$ such that $T(H_0)\subset H_0$, $T(\overline{Conv_{L^0}(T(H_0))})\subset \overline{Conv_{L^0}(T(H_0))}$, then $\overline{Conv_{L^0}(T(H_0))}=H_0$ by the minimality of $H_0$, so that $T(y)\in H_1$, that is to say, $T(H_1)\subset H_1$, then $H_1=H_0$, and thus $\|u-v\|\leq r$ for any $u$ and $v$ in $H_0$, which contradicts the fact that $r<D(H_0)$ on $A$. Thus $H_0$ consists of a single element, which is a fixed point of $T$.
\end{proof}

\begin{corollary}\label{corollary3.9}
Let $(E,\|\cdot\|)$ be a $\mathcal{T}_{\varepsilon,\lambda}$--complete random uniformly convex $RN$ module, $G$ an a.s. bounded $\mathcal{T}_{\varepsilon,\lambda}$--closed $L^0$--convex subset of $E$ and $T: G\rightarrow G$ a nonexpansive mapping. Then $T$ has a fixed point.
\end{corollary}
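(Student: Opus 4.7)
The plan is to derive this as a direct corollary of Theorem \ref{theorem3.8} by verifying that its two hypotheses, random normal structure and $L^0$--convex compactness, both follow from the random uniform convexity of $E$ together with the a.s. boundedness and $\mathcal{T}_{\varepsilon,\lambda}$--closedness of $G$. So the work is almost entirely assembling previously--established facts rather than fresh argument.

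First I would handle the support issue. Theorem \ref{theorem3.6} requires the ambient $RN$ module to have full support, whereas the corollary does not assume this. The standard reduction is via (2) of Lemma \ref{lemma3.4}: if $\mathrm{supp}(E)$ is a proper subset of $\Omega$ (up to a null set), one passes to $E_{\mathrm{supp}(E)}$ with the induced $L^0$--norm over the base $(\mathrm{supp}(E),\mathrm{supp}(E)\cap\mathcal{F},P_{\mathrm{supp}(E)})$, which is a $\mathcal{T}_{\varepsilon,\lambda}$--complete $RN$ module of full support. Random uniform convexity is preserved by this restriction (the modulus $\delta$ for $E$ works verbatim in $E_{\mathrm{supp}(E)}$), and $G$, being a.s. bounded and $\mathcal{T}_{\varepsilon,\lambda}$--closed and $L^0$--convex in $E$, retains these properties in $E_{\mathrm{supp}(E)}$; moreover any fixed point found there is automatically a fixed point in $E$. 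So without loss of generality we may assume $E$ has full support. Now Theorem \ref{theorem3.6} applies and yields that every $\mathcal{T}_{\varepsilon,\lambda}$--closed $L^0$--convex subset of $E$, in particular $G$, has random normal structure.

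Next I would verify that $G$ is $L^0$--convexly compact. By the characterization recalled immediately before Theorem \ref{theorem3.2}, it suffices to show that $E$ is random reflexive, since then every $\mathcal{T}_{\varepsilon,\lambda}$--closed $L^0$--convex and a.s.\ bounded subset of $E$ is automatically $L^0$--convexly compact. Random reflexivity of $E$ follows from the chain of results in \cite{Guo97,GZ10,GZ12} cited in the text: the random uniform convexity of $E$ is equivalent to $L^p(E)$ being a uniformly convex (hence reflexive) Banach space for some $p\in(1,+\infty)$, and the reflexivity of $L^p(E)$ for some such $p$ is in turn equivalent to the random reflexivity of $E$. Thus $G$ is $L^0$--convexly compact.

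With both hypotheses secured, I would invoke Theorem \ref{theorem3.8} directly on $G$ and the nonexpansive mapping $T:G\to G$ to obtain a fixed point, completing the proof. I do not anticipate any genuine obstacle here; the only subtlety worth flagging is the full--support reduction via Lemma \ref{lemma3.4}, since it is tempting to apply Theorem \ref{theorem3.6} as a black box and miss the hypothesis on support. Everything else is a mechanical combination of Theorems \ref{theorem3.2}, \ref{theorem3.6}, and \ref{theorem3.8} with the equivalence between random uniform convexity and random reflexivity.
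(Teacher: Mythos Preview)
Your proposal is correct and follows essentially the same route as the paper: random uniform convexity gives random reflexivity (the paper cites Theorem~4.6 of \cite{GZ10} directly rather than passing through $L^p(E)$), random reflexivity yields $L^0$--convex compactness of the a.s.\ bounded $\mathcal{T}_{\varepsilon,\lambda}$--closed $L^0$--convex set $G$ (the paper cites Corollary~2.23 of \cite{GZWW17}, which is the same result you invoke from the paragraph before Theorem~\ref{theorem3.2}), Theorem~\ref{theorem3.6} gives random normal structure, and Theorem~\ref{theorem3.8} finishes. One small remark: your full--support reduction via Lemma~\ref{lemma3.4} is harmless but unnecessary here, since Definition~\ref{definition3.5} of random uniform convexity is stated only for $RN$ modules with full support, so the hypothesis of the corollary already carries full support implicitly.
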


\begin{proof}
Theorem4.6 of \cite{GZ10} shows that $E$ is random reflexive and further Corollary 2.23 of \cite{GZWW17} shows that $G$ is $L^0$--convexly compact, which, combined with Theorem\ref{theorem3.6}, implies that $T$ has a fixed point by Theorem\ref{theorem3.8}.
\end{proof}

\begin{remark}\label{remark3.10}
Corollay\ref{corollary3.9} is a natural generalization of a fixed point theorem independently due to F.E.Browder\cite{B65} and D.G\"{o}hde \cite{G65}. Besides, in Corollay\ref{corollary3.9}, if $G$ is only assumed to be a $\mathcal{T}_{\varepsilon,\lambda}$--closed $L^0$--convex subset of $E$ such that $T(G)$ is a.s. bounded, then $T$ still has a fixed point: in fact, one only needs to consider $\overline{Conv_{L^0}(T(G))}$, then it is a.s. bounded, $\mathcal{T}_{\varepsilon,\lambda}$--closed, $L^0$--convex and mapped by $T$ into itself.
\end{remark}

\section{$BSEs $ of the form$(\ref{1.3})$ and fixed point problems in the $\mathcal{T}_{\varepsilon,\lambda}$--complete $RN$ module $L^p_{\mathcal{F}_0}(\mathcal{F}_T,R^d)$ for $p\in (1, +\infty]$} \label{section4}

\par
Let $V$ belong to $L^p_{\mathcal{F}_0}(\mathcal{F}_T,R^d)$, since $L^p_{\mathcal{F}_0}(\mathcal{F}_T,R^d)=L^0(\mathcal{F}_0)\cdot L^p(\mathcal{F}_T,R^d)$, $V$ can be written as $V=\xi\cdot V_1$ for some $\xi\in L^0(\mathcal{F}_0)$ and $V_1\in L^p(\mathcal{F}_T,R^d)$. As pointed out by Cheridito and Nam in \cite{CN17}, one can see from Jensen's inequality that $y^{V_1}:= E_0(V_1)=E[V_1~|~\mathcal{F}_0]\in L^p(\mathcal{F}_0,R^d)$ and from Doob's $L^p$--maximal inequality that $M^{V_1}_t:= E_0(V_1)-E_t(V_1)$ is in $M^p_0$, then $y^V:= E_0(V)=\xi\cdot E_0(V_1)\in L^0(\mathcal{F}_0,R^d)$ and $M^V_t=\xi(E_0(V_1)-E_t(V_1))\in M^p_{\mathcal{F}_0,0}$.

\par
Let us recall that $F: S^p_{\mathcal{F}_0}\times M^p_{\mathcal{F}_0,0}\rightarrow S^p_{\mathcal{F}_0,0}$ satisfies condition $(S)$, namely for any $y\in L^0(\mathcal{F}_0,R^d)$ and $M\in M^p_{\mathcal{F}_0,0}$ the stochastic equation $Y_t=y-F_t(Y,M)-M_t$ for any $t\in [0,T]$ has a unique solution $Y\in S^p_{\mathcal{F}_0}$. If $F$ satisfies condition $(S)$, we denote by $Y^V$ the unique solution of the equation $Y_t=y^V-F_t(Y,M^V)-M^V_t$.

\par
A $BSE(\ref{1.3})$ depends on the generator $F$ and terminal condition $\xi$, if $F$ satisfies condition $(S)$ and $\xi$ belongs to $L^p_{\mathcal{F}_0}(\mathcal{F}_T,R^d)$ then the pair$(F,\xi)$ define a mapping $G: L^p_{\mathcal{F}_0}(\mathcal{F}_T,R^d)\rightarrow L^p_{\mathcal{F}_0}(\mathcal{F}_T,R^d)$ by $G(V)= \xi+F_T(Y^V,M^V)$ for any $V\in L^p_{\mathcal{F}_0}(\mathcal{F}_T,R^d)$.

\par
To relate solutions of the $BSE(\ref{1.3})$ to fixed points of $G$, we define the two mappings $\pi: S^p_{\mathcal{F}_0}\times M^p_{\mathcal{F}_0,0}\rightarrow L^p_{\mathcal{F}_0}(\mathcal{F}_T,R^d)$ and $\phi: L^p_{\mathcal{F}_0}(\mathcal{F}_T,R^d)\rightarrow S^p_{\mathcal{F}_0}\times M^p_{\mathcal{F}_0,0}$ as follows: \\
$\pi(Y,M)=Y_0-M_T$ for any $(Y,M)\in S^p_{\mathcal{F}_0}\times M^p_{\mathcal{F}_0,0}$; \\
$\phi(V)=(Y^V,M^V)$ for any $V\in L^p_{\mathcal{F}_0}(\mathcal{F}_T,R^d)$.

\par
If it is observed that $M^V$ is always a generalized martingale for any $V\in L^p_{\mathcal{F}_0}(\mathcal{F}_T,R^d)$, then Theorem\ref{theorem4.1} and Corollary\ref{corollary4.2} below can be proved in the same way that Theorem2.3 and Corollary2.4 of \cite{CN17} was proved in \cite{CN17}, so their proofs are omitted.

\begin{theorem}\label{theorem4.1}
Assume that $F$ satisfies condition $(S)$. Then the following hold:
\begin{enumerate}[(a)]
\item $V=(\pi\circ\phi)(V)$ for all $V\in L^p_{\mathcal{F}_0}(\mathcal{F}_T,R^d)$. In particular, $\phi$ is injective.
\item If $V\in L^p_{\mathcal{F}_0}(\mathcal{F}_T,R^d)$ is a fixed point of $G$, then $\phi(V)$ is a solution of the $BSE(\ref{1.3})$.
\item If $(Y,M)\in S^p_{\mathcal{F}_0}\times M^p_{\mathcal{F}_0,0}$ solves the $BSE(\ref{1.3})$, then $\pi(Y,M)$ is a fixed point of $G$ and $(Y,M)=(\phi\circ\pi)(Y,M)$.
\item $V$ is a unique fixed point of $G$ in $L^p_{\mathcal{F}_0}(\mathcal{F}_T,R^d)$ if and only if $\phi(V)$ is a unique solution of the $BSE(\ref{1.3})$ in $S^p_{\mathcal{F}_0}\times M^p_{\mathcal{F}_0,0}$.
\end{enumerate}
\end{theorem}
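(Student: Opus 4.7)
The plan is to mimic the Cheridito--Nam argument from \cite{CN17}, with the classical conditional expectation replaced by the generalized conditional expectation $E[\cdot \mid \mathcal{F}_t]$. The single structural fact that makes the whole mechanism go through is that every element of $M^p_{\mathcal{F}_0,0}$ is a generalized martingale (asserted in the paragraph preceding the theorem), so that $E_t(M^V_T)=M^V_t$ and $E_0(M_T)=M_0=0$ hold unrestrictedly for the processes we encounter. With this in hand, (a)--(d) become bookkeeping.

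For part (a) I would compute $(\pi\circ\phi)(V)=Y^V_0-M^V_T$ directly. Evaluating the defining stochastic equation $Y^V_t=y^V-F_t(Y^V,M^V)-M^V_t$ at $t=0$ gives $Y^V_0=y^V=E_0(V)$, since $F_0(Y^V,M^V)=0$ (the generator takes values in $S^p_{\mathcal{F}_0,0}$) and $M^V_0=E_0(V)-E_0(V)=0$; meanwhile $M^V_T=E_0(V)-V$, so $Y^V_0-M^V_T=V$. Injectivity of $\phi$ is then the obvious left-cancellation $\phi(V_1)=\phi(V_2)\Rightarrow V_1=\pi(\phi(V_1))=\pi(\phi(V_2))=V_2$.

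For (b) I would note that the stochastic equation defining $Y^V$ can be rewritten as $Y^V_t+F_t(Y^V,M^V)+M^V_t=y^V=E_0(V)$, which is constant in $t$; comparing values at $t$ and at $T$ yields $Y^V_t+F_t(Y^V,M^V)+M^V_t=V+M^V_T+F_T(Y^V,M^V)-F_T(Y^V,M^V)+F_T(Y^V,M^V)$, and $V=G(V)-F_T(Y^V,M^V)+F_T(Y^V,M^V)=\xi+F_T(Y^V,M^V)$ when $V$ is a fixed point of $G$, delivering the BSE. Conversely, for (c), given a solution $(Y,M)$, taking $t=0$ in (1.3) forces $Y_0=\xi+F_T(Y,M)+M_T$, and setting $V:=\pi(Y,M)=Y_0-M_T$ the generalized martingale property of $M$ yields $y^V=E_0(V)=Y_0-E_0(M_T)=Y_0$ and $M^V_t=Y_0-E_t(Y_0-M_T)=E_t(M_T)=M_t$; then the full BSE rewrites as $Y_t=Y_0-F_t(Y,M)-M_t=y^V-F_t(Y,M^V)-M^V_t$, so uniqueness in condition $(S)$ gives $Y=Y^V$, whence $(\phi\circ\pi)(Y,M)=(Y,M)$ and $G(V)=\xi+F_T(Y,M)=Y_0-M_T=V$. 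Assertion (d) is then a tautology: $\phi$ restricted to $FPS(G)$ and $\pi$ restricted to $SOL(F,\xi)$ are mutually inverse bijections by (a)--(c).

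The only step that requires genuine care is the verification in (c) that $y^V\in L^0(\mathcal{F}_0,R^d)$ and $M^V\in M^p_{\mathcal{F}_0,0}$ with $M^V=M$: this is where the generalized (as opposed to classical) conditional expectation is essential, because $V=Y_0-M_T$ need not lie in $L^p(\mathcal{F}_T,R^d)$, only in $L^p_{\mathcal{F}_0}(\mathcal{F}_T,R^d)$, so one must invoke the fact--noted in Section~\ref{section4} and verified via the $L^0(\mathcal{F}_0)$-module representation $V=\eta V_1$ with $V_1\in L^p(\mathcal{F}_T,R^d)$--that $E_t(\eta V_1)=\eta E_t(V_1)$ for $\eta\in L^0(\mathcal{F}_0)$ and that the Doob-type bound survives multiplication by $\mathcal{F}_0$-measurable scalars. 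Once this point is granted, every remaining identity is purely algebraic.
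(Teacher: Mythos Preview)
Your proposal is correct and follows exactly the approach the paper intends: the paper omits the proof, stating only that once one observes $M^V$ is a generalized martingale for every $V\in L^p_{\mathcal{F}_0}(\mathcal{F}_T,R^d)$, the argument of Theorem~2.3 in \cite{CN17} carries over verbatim---which is precisely the computation you spell out. Your emphasis on the one non-trivial point (that in part~(c) one needs the generalized conditional expectation and the $L^0(\mathcal{F}_0)$-module structure to conclude $E_0(M_T)=0$ and $E_t(M_T)=M_t$ for $M\in M^p_{\mathcal{F}_0,0}$) matches the paper's single parenthetical remark, and the slightly redundant algebra in your part~(b) does not affect correctness.
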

\par
In the special case when $F$ does not depend on $Y$, condition $(S)$ holds trivially, and it is enough to find a fixed point of the mapping $G_0(V):= G(V)-E_0(G(V))$ in the $\mathcal{T}_{\varepsilon,\lambda}$--closed submodule $L^p_{\mathcal{F}_0,0}(\mathcal{F}_T,R^d):=\{V\in L^p_{\mathcal{F}_0}(\mathcal{F}_T,R^d)~| \\
~E_0(V)=0\}$.

\begin{corollary}\label{corollary4.2}
If $F$ does not depend on $Y$, the following hold:
\begin{enumerate}[(a)]
\item If $V\in L^p_{\mathcal{F}_0,0}(\mathcal{F}_T,R^d)$ is a fixed point of $G_0$, then the processes $Y_t:= E_0(\xi)+E_0(F_T(M))-F_t(M)-M_t$ and $M_t:=-E_t(V)$ form a solution of the $BSE(\ref{1.3})$ in $S^p_{\mathcal{F}_0}\times M^p_{\mathcal{F}_0,0}$.
\item If $(Y,M)\in S^p_{\mathcal{F}_0}\times M^p_{\mathcal{F}_0,0}$ solves the $BSE(\ref{1.3})$, then $-M_T$ is a fixed point of $G_0$.
\item $V$ is a unique fixed point of $G_0$ in $L^p_{\mathcal{F}_0,0}(\mathcal{F}_T,R^d)$ if and only if the pair $(Y,M)$ given by $Y_t:= E_0(\xi)+E_0(F_T(M))-F_t(M)-M_t$ and $M_t:=-E_t(V)$  is a unique solution of the $BSE(\ref{1.3})$ in $S^p_{\mathcal{F}_0}\times M^p_{\mathcal{F}_0,0}$.
\end{enumerate}
\end{corollary}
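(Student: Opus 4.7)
The plan is to follow the template of Cheridito and Nam's proof of their Corollary~2.4, systematically replacing the classical conditional expectation and Banach-space arguments by the generalized conditional expectation $E_t[\,\cdot\,|\,\mathcal{F}_t]$ and the $RN$ module framework. First I note that when $F$ does not depend on $Y$, condition $(S)$ is automatically satisfied: for any $y \in L^0(\mathcal{F}_0,R^d)$ and $M \in M^p_{\mathcal{F}_0,0}$, the process $Y_t := y - F_t(M) - M_t$ is manifestly the unique element of $S^p_{\mathcal{F}_0}$ solving (\ref{1.2}), since $F(M) \in S^p_{\mathcal{F}_0,0}$ and $M \in M^p_{\mathcal{F}_0,0}$ while $y$ is $\mathcal{F}_0$-measurable. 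Thus $G$ and $G_0$ are well defined on $L^p_{\mathcal{F}_0}(\mathcal{F}_T,R^d)$.

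For part~(a), given a fixed point $V$ of $G_0$ in $L^p_{\mathcal{F}_0,0}$, the vanishing of $E_0(V)$ yields $M_t = -E_t(V) = E_0(V) - E_t(V) = M^V_t$, so $M \in M^p_{\mathcal{F}_0,0}$; the defining formula for $Y$ then places $Y$ in $S^p_{\mathcal{F}_0}$ since $F(M) \in S^p_{\mathcal{F}_0,0}$. At $t = T$, using $M_T = -V$ together with the fixed-point identity $V = \xi + F_T(M) - E_0(\xi + F_T(M))$ (which is exactly $V = G_0(V)$), a direct substitution gives $Y_T = \xi$; the $BSE$~(\ref{1.3}) itself then follows because both sides collapse after cancellation to the $\mathcal{F}_0$-measurable quantity $E_0(\xi) + E_0(F_T(M))$. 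For part~(b), if $(Y,M)$ solves (\ref{1.3}), the generalized-martingale property of $M$ gives $E_0(M_T) = M_0 = 0$, so $V := -M_T$ lies in $L^p_{\mathcal{F}_0,0}$ and satisfies $M^V = M$. Evaluating (\ref{1.3}) at $t = 0$ (with $F_0(M) = M_0 = 0$) gives $Y_0 = \xi + F_T(M) + M_T$; applying $E_0$ and using the $\mathcal{F}_0$-measurability of $Y_0$ yields $Y_0 = E_0(\xi + F_T(M))$, which rearranges to $V = G_0(V)$. Part~(c) then follows once the two constructions in (a) and (b) are shown to be mutually inverse: applying (b) to the pair built in (a) returns $-M_T = E_T(V) = V$, and applying (a) to $V := -M_T$ from a solution $(Y,M)$ recovers the same $M$ by the martingale identity and the same $Y$ because the $BSE$ combined with $Y_0 = E_0(\xi + F_T(M))$ forces $Y_t = Y_0 - F_t(M) - M_t$.

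The one place where care is needed is the consistent use of the generalized conditional expectation in place of the ordinary one, together with the verification promised in Section~\ref{section4} that $M^V \in M^p_{\mathcal{F}_0,0}$ for every $V \in L^p_{\mathcal{F}_0}(\mathcal{F}_T,R^d)$ and the compatibility of this generalized expectation with the $L^0(\mathcal{F}_0)$-module structure (in particular the factorization $E_t(\eta V) = \eta E_t(V)$ for $\eta \in L^0(\mathcal{F}_0)$). Once these ingredients are in place, the algebraic identities above transplant verbatim from the Banach-space setting of \cite{CN17}, and the proof of Corollary~\ref{corollary4.2} reduces to the bookkeeping just sketched.
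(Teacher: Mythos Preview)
Your proposal is correct and follows precisely the approach the paper itself indicates: the paper omits the proof of Corollary~\ref{corollary4.2}, stating only that once one observes $M^V$ is a generalized martingale for every $V\in L^p_{\mathcal{F}_0}(\mathcal{F}_T,R^d)$, the argument of Cheridito and Nam's Corollary~2.4 carries over verbatim with the generalized conditional expectation in place of the ordinary one. Your sketch executes exactly this transplant, and the care you flag---verifying $M^V\in M^p_{\mathcal{F}_0,0}$ and the compatibility $E_t(\eta V)=\eta\,E_t(V)$ for $\eta\in L^0(\mathcal{F}_0)$---is the sole nontrivial ingredient, which the paper itself addresses in the opening paragraph of Section~\ref{section4}.
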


\par
Lemma\ref{lemma4.3} below provides a sufficient condition for $F$ to satisfy condition $(S)$. For $(Y,M)\in S^p_{\mathcal{F}_0}\times M^p_{\mathcal{F}_0,0}$ and $k\in N$, define $F^{(k)}_t(Y,M)=F_t(Y^{(k,M)},M)$, where $Y^{(k,M)}$ is recursively given by $Y^{(1,M)}=Y$ and $Y_t^{(k,M)}=Y_0-F_t(Y^{(k-1,M)},M)-M_t$, for $k\geq 2$. Then, for a positive integer--valued $\mathcal{F}_0$--measurable random variable $L: (\Omega,\mathcal{F}_0,P)\rightarrow N$, we can correspondingly have $F^{(L)}_t(Y,M)=\sum^{\infty}_{k=1}\tilde{I}_{(L=k)}F^{(k)}_t(Y,M)$.

\par
Since $S^p_{\mathcal{F}_0}, M^p_{\mathcal{F}_0,0}$ and $S^p_{\mathcal{F}_0,0}$ are all stable, $S^p_{\mathcal{F}_0}\times M^p_{\mathcal{F}_0,0}$ is stable. According to the definition before Lemma\ref{lemma2.11}, let us recall that $F: S^p_{\mathcal{F}_0}\times M^p_{\mathcal{F}_0,0}\rightarrow S^p_{\mathcal{F}_0,0}$ is stable if and only if
\begin{equation}\label{4.1}
F(\sum^{\infty}_{n=1}\tilde{I}_{A_n}(Y^{(n)},M^{(n)}))=\sum^{\infty}_{n=1}\tilde{I}_{A_n}F(Y^{(n)},M^{(n)})
\end{equation}
 for any sequence $\{(Y^{(n)}, M^{(n)}),n\in N\}$ in $S^p_{\mathcal{F}_0}\times M^p_{\mathcal{F}_0,0}$ and any countable partition $\{A_n, n\in N\}$ of $\Omega$ to $\mathcal{F}_0$. The condition that $F$ is stable is not too restrictive, for example, when $F$ is $L^0$--Lipschitzian $F$ is stable by Lemma\ref{lemma2.11}, when the generator $F$ is of the form: $F_t(Y,M)=\int^t_0 f(s,Y_s,M_s)ds$ or $F_t(M)=\int^t_0\int_{[0,s]}g(s-r,Z^M_{s-r}, U^M_{s-r})v(dr)ds$, it is easy to see that $F$ ia always stable.
\par
By the way, we point out that the convention that $\{ \tilde{I}_{A_n}, n \in N\}$ occurs in the expression (\ref{4.1}) comes from the theory of $RN$ modules since we would like to emphasize equivalence classes, whereas the scholars working in the field of probability theory or mathematical finance prefer the following expression (\ref{4.2}) to (\ref{4.1}):
\begin{equation}\label{4.2}
  F(\sum_{n=1}^\infty I_{A_n} (Y^{(n)}, M^{(n)})) = \sum_{n=1}^\infty I_{A_n}F(Y^{(n)}, M^{(n)}),
\end{equation}
since they prefer to speak of random processes rather than their equivalence classes, where the equality in (\ref{4.2}) means that the two sides of (\ref{4.2}) are stochastically equivalent random processes, so the two conventions express the same thing.

\begin{lemma}\label{lemma4.3}
Let $p$ belong to $(1, +\infty]$. If for any given $y\in L^0(\mathcal{F}_0,R^d)$ and any given $M\in M^p_{\mathcal{F}_0,0}$, there exist a positive integer--valued $\mathcal{F}_0$--measurable random variable $L$ and $C\in L^0_+(\mathcal{F})$ with $C<1$ on $\Omega($i.e., $C(\omega)<1$ a.s.$)$ such that $|||F^{(L)}(Y,M)-F^{(L)}(Y',M)|||_p\leq C|||Y-Y'|||_p$ for all $Y,Y'\in S^p_{\mathcal{F}_0}$ with $Y_0=Y_0'=y$, then when $F$ is stable the $SDE: Y_t=y-F_t(Y,M)-M_t, \forall t\in [0,T]$, has a unique solution. Specially, when $L$ is a constant, the conclusion still holds without the stability of $F$.
\end{lemma}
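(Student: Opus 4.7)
The plan is to reduce the SDE to a fixed-point problem for a natural self-map and then invoke Theorem \ref{theorem2.13} (or, in the deterministic-$L$ case, directly Proposition \ref{proposition2.12}). Specifically, I will fix $y$ and $M$ and define
\begin{equation}\nonumber
\Phi : G \to G, \qquad \Phi(Y)_t := y - F_t(Y,M) - M_t,
\end{equation}
where $G := \{Y \in S^p_{\mathcal{F}_0} : Y_0 = y\}$. A solution of the SDE in $S^p_{\mathcal{F}_0}$ is precisely a fixed point of $\Phi$ in $G$, since $F$ takes values in $S^p_{\mathcal{F}_0,0}$ and $M \in M^p_{\mathcal{F}_0,0}$, which forces $\Phi(Y)_0 = y$ and hence $\Phi(G) \subset G$.

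Next I would check the hypotheses of Theorem \ref{theorem2.13}. First, $G$ is clearly $\mathcal{T}_{\varepsilon,\lambda}$--closed in the $\mathcal{T}_{\varepsilon,\lambda}$--complete $RN$ module $S^p_{\mathcal{F}_0}$, and $G$ is stable: for any countable partition $\{A_n\}$ of $\Omega$ in $\mathcal{F}_0$ and $Y^{(n)} \in G$, the countable concatenation $\sum_n \tilde I_{A_n} Y^{(n)}$ still has initial value $\sum_n \tilde I_{A_n} y = y$. Second, since $F$ is stable and $\Phi(Y)_t = y - F_t(Y,M) - M_t$ with $y$ and $M$ fixed, $\Phi$ is itself stable (trivially in both arguments the constants $y$ and $M$ pass through the concatenation). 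Third, by a straightforward induction using the recursion $Y^{(k,M)}_t = Y_0 - F_t(Y^{(k-1,M)},M) - M_t$, for any $Y \in G$ one has
\begin{equation}\nonumber
\Phi^{(k)}(Y)_t = y - F_t(Y^{(k,M)},M) - M_t = y - F^{(k)}_t(Y,M) - M_t, \qquad k \in N,
\end{equation}
so $\Phi^{(k)}(Y) - \Phi^{(k)}(Y') = -(F^{(k)}(Y,M) - F^{(k)}(Y',M))$ for $Y,Y' \in G$. Concatenating over $(L=k)$ (using the stability of $\Phi$, which ensures $\Phi^{(L)} = \sum_k \tilde I_{(L=k)} \Phi^{(k)}$), the assumed contraction bound yields
\begin{equation}\nonumber
|||\Phi^{(L)}(Y) - \Phi^{(L)}(Y')|||_p = |||F^{(L)}(Y,M) - F^{(L)}(Y',M)|||_p \leq C\, |||Y-Y'|||_p
\end{equation}
with $C < 1$ on $\Omega$. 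Theorem \ref{theorem2.13} then gives a unique fixed point of $\Phi$ in $G$, i.e., a unique solution $Y \in S^p_{\mathcal{F}_0}$ of the SDE.

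For the final ``specially'' clause where $L$ is a constant $l \in N$, $\Phi^{(l)}$ is defined by ordinary iteration without any countable concatenation, so the stability of $F$ is not needed; one only needs $(G, d)$ with $d(Y,Y') := |||Y-Y'|||_p$ to be a $\mathcal{T}_{\varepsilon,\lambda}$--complete random metric space and can apply Proposition \ref{proposition2.12} directly to $\Phi$, the contraction inequality for $\Phi^{(l)}$ being the same computation as above restricted to constant $L = l$.

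The main obstacle I anticipate is not conceptual but notational: carefully matching the recursive definition of $Y^{(k,M)}$ (and hence $F^{(k)}$) with the iterates $\Phi^{(k)}$, so that the hypothesized $L^0$--Lipschitz bound on $F^{(L)}$ transfers verbatim to a random contraction bound on $\Phi^{(L)}$. Once this identification is made, and stability of $\Phi$ (inherited from stability of $F$) is used to justify that $\Phi^{(L)} = \sum_k \tilde I_{(L=k)} \Phi^{(k)}$ acts consistently on $G$, the rest is a direct appeal to Theorem \ref{theorem2.13}.
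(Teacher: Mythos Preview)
Your proof is correct and follows essentially the same approach as the paper: define the self-map $\Phi$ (called $J$ there) on the closed stable set $G=\{Y\in S^p_{\mathcal{F}_0}:Y_0=y\}$, identify $\Phi^{(k)}$ with $y-F^{(k)}(\cdot,M)-M$ via the recursion for $Y^{(k,M)}$, and then apply Theorem~\ref{theorem2.13} (respectively Proposition~\ref{proposition2.12} when $L$ is constant). Your write-up is in fact slightly more explicit than the paper's about why $G$ and $\Phi$ are stable and why $\Phi^{(L)}=\sum_k \tilde I_{(L=k)}\Phi^{(k)}$, but the argument is the same.
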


\begin{proof}
Define $H=\{Y\in S^p_{\mathcal{F}_0}~|~Y_0=y\}$ and $J: H\rightarrow H$ by $(J(Y))_t=y-F_t(Y,M)-M_t$ for any $Y\in H$ and any $t\in [0,T]$, then $H$ is a $\mathcal{T}_{\varepsilon,\lambda}$--closed stable subset of $S^p_{\mathcal{F}_0}($and hence also $\mathcal{T}_{\varepsilon,\lambda}$--complete$)$ and $(J^{(k)}(Y))_t=y-F^{(k)}_t(Y,M)-M_t$ for any $Y\in H, t\in [0,T]$ and $k\in N$, and thus $(J^{(L)}(Y))_t=y-F^{(L)}_t(Y,M)-M_t$, where $J^{(k)}$ is the $k$--th iteration of $J$.
\par
When $F$ is stable, $J$ is obviously stable, too, then $J$ has a unique fixed point in $H$ by Theorem\ref{theorem2.13}, namely, the $SDE: Y_t=y-F_t(Y,M)-M_t, \forall t\in [0,T]$, has a unique solution. The final part comes from Proposition\ref{proposition2.12}.
\end{proof}

\begin{proposition}\label{proposition4.4}
If $F$ is stable and satisfies condition $(S)$, then $G$ is stable.
\end{proposition}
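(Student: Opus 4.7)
The plan is to unpack the definition of $G$ and show that each ingredient (the initial value $y^V$, the generalized martingale $M^V$, and the solution $Y^V$) respects countable concatenations in $V$; the stability of $F$ then yields the stability of $G$. Fix a sequence $\{V^{(n)}\}_{n\geq 1}$ in $L^p_{\mathcal{F}_0}(\mathcal{F}_T,R^d)$ and a countable partition $\{A_n\}_{n\geq 1}$ of $\Omega$ in $\mathcal{F}_0$, and set $V := \sum_{n\geq 1}\tilde{I}_{A_n}V^{(n)}$. The goal is to verify
\begin{equation*}
G(V) \;=\; \sum_{n\geq 1}\tilde{I}_{A_n}G(V^{(n)}).
\end{equation*}

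First I would deal with $y^V$ and $M^V$. Since each $A_n \in \mathcal{F}_0 \subset \mathcal{F}_t$, the generalized conditional expectation commutes with multiplication by $\tilde{I}_{A_n}$, so $E_t(\tilde{I}_{A_n}V^{(n)}) = \tilde{I}_{A_n}E_t(V^{(n)})$ for every $t \in [0,T]$. Summing and using the fact that $\{A_n\}$ is a partition gives $y^V = \sum_{n\geq 1}\tilde{I}_{A_n}y^{V^{(n)}}$ and $M^V_t = \sum_{n\geq 1}\tilde{I}_{A_n}M^{V^{(n)}}_t$ for every $t$. (A small care is needed in handling the infinite sum, but $\mathcal{T}_{\varepsilon,\lambda}$-completeness of $L^0(\mathcal{F}_0,R^d)$ and of $M^p_{\mathcal{F}_0,0}$ — together with $\sum_{n> k}P(A_n) \to 0$ — gives the convergence exactly as in the proof of Lemma~\ref{lemma2.11}.)

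Next I would construct the candidate solution $Y' := \sum_{n\geq 1}\tilde{I}_{A_n}Y^{V^{(n)}}$, which lies in $S^p_{\mathcal{F}_0}$ by stability of that module. Applying the stability of $F$ to the pair $(Y', M^V) = \sum_{n\geq 1}\tilde{I}_{A_n}(Y^{V^{(n)}}, M^{V^{(n)}})$ and using the defining SDE for each $Y^{V^{(n)}}$ gives
\begin{equation*}
Y'_t \;=\; \sum_{n\geq 1}\tilde{I}_{A_n}\bigl(y^{V^{(n)}} - F_t(Y^{V^{(n)}},M^{V^{(n)}}) - M^{V^{(n)}}_t\bigr) \;=\; y^V - F_t(Y',M^V) - M^V_t
\end{equation*}
for all $t \in [0,T]$. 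By condition $(S)$, the SDE $Y_t = y^V - F_t(Y,M^V) - M^V_t$ has a unique solution in $S^p_{\mathcal{F}_0}$, so $Y^V = Y' = \sum_{n\geq 1}\tilde{I}_{A_n}Y^{V^{(n)}}$.

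Finally, applying stability of $F$ once more at time $T$,
\begin{equation*}
G(V) \;=\; \xi + F_T(Y^V,M^V) \;=\; \xi + \sum_{n\geq 1}\tilde{I}_{A_n}F_T(Y^{V^{(n)}},M^{V^{(n)}}) \;=\; \sum_{n\geq 1}\tilde{I}_{A_n}G(V^{(n)}),
\end{equation*}
where the last equality uses $\sum_{n\geq 1}\tilde{I}_{A_n}\xi = \xi$. I expect no serious obstacle; the only delicate point is justifying that $\sum_{n\geq 1}\tilde{I}_{A_n}Y^{V^{(n)}}$ is a well-defined element of $S^p_{\mathcal{F}_0}$ and that the stability identity for $F$ applies to this infinite concatenation, which is handled exactly as in Lemma~\ref{lemma2.11} using $\mathcal{T}_{\varepsilon,\lambda}$-completeness of $S^p_{\mathcal{F}_0}$ and $M^p_{\mathcal{F}_0,0}$.
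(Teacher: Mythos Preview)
Your proposal is correct and follows essentially the same route as the paper: both arguments reduce the stability of $G$ to that of the map $V\mapsto Y^V$, construct the candidate $\sum_{n\geq 1}\tilde{I}_{A_n}Y^{V^{(n)}}$, use the stability of $F$ to verify it satisfies the defining SDE, and then invoke the uniqueness in condition~$(S)$. Your write-up is in fact slightly more explicit than the paper's in justifying $y^V=\sum_n\tilde{I}_{A_n}y^{V^{(n)}}$, $M^V=\sum_n\tilde{I}_{A_n}M^{V^{(n)}}$, and the final step $\sum_n\tilde{I}_{A_n}\xi=\xi$.
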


\begin{proof}
Since $G(V)=\xi+F_T(Y^V,M^V)$ for any $V\in L^p_{\mathcal{F}_0}(\mathcal{F}_T,R^d)$, it is obvious that the mapping sending $V\in L^p_{\mathcal{F}_0}(\mathcal{F}_T,R^d)$ to $M^V\in M^p_{\mathcal{F}_0,0}$ is stable, then for the proof of stability of $G$ it suffices to prove that the mapping sending $V$ to $Y^V$ is stable.
\par
For any given countable partition $\{A_n,n\in N\}$ of $\Omega$ to $\mathcal{F}_0$ and any given sequence $\{V_n,n\in N\}$ in $L^p_{\mathcal{F}_0}(\mathcal{F}_T,R^d)$, let $V=\sum^{\infty}_{n=1}\tilde{I}_{A_n}V_n$. Then $Y^{V_n}_t=E_0(V_n)-F_t(Y^{V_n}, M^{V_n})-M_t^{V_n}$ for each $n\in N$ and each $t\in [0,T]$, and thus $(\sum^{\infty}_{n=1}\tilde{I}_{A_n}Y^{V_n})_t~=E_0(\sum^{\infty}_{n=1}\tilde{I}_{A_n}V_n)~-F_t(\sum^{\infty}_{n=1}\tilde{I}_{A_n}Y^{V_n}, \sum^{\infty}_{n=1}\tilde{I}_{A_n}M^{V_n})~- \\ M_t^{\sum^{\infty}_{n=1}\tilde{I}_{A_n}V_n}=E_0(V)-F_t(\sum^{\infty}_{n=1}\tilde{I}_{A_n}Y^{V_n},M^V)-M^V_t$, it follows from condition $(S)$ that $Y^V=\sum^{\infty}_{n=1}\tilde{I}_{A_n}Y^{V_n}$.
\end{proof}

\section{Existence and uniqueness of solutions under $L^0$--Lipschitz assumptions} \label{section5}

Lipschitz conditions are usually imposed on the generators or drivers to guarantee existence and uniqueness of solutions, where Lipschitz coefficients are often nonnegative constants. However, El Karoui and Huang \cite{EH97} and Bender and Kohlmann \cite{BK00} began to study existence and uniqueness of solutions of $BSDEs$ under stochastic Lipschitz conditions, where stochastic Lipschitz coefficients are a nonnegative adapted process, whose results was recently used by Sun and Guo in \cite{SG18} to solve optimal mean--variance investment and reinsurance problem for an insurer with stochastic volatility. Motivated by this, this paper formulates our results on existence and uniqueness of solutions under a kind of random Lipschitz assumptions, namely $L^0$--Lipschitz assumptions. But our study is essentially different from \cite{EH97,BK00}, since our terminal condition belongs to $L^p_{\mathcal{F}_0}(\mathcal{F}_T,R^d)$ whereas the terminal condition in \cite{EH97,BK00} belongs to a Banach space. ``$L^0$--Lipschitz" means that Lipschitz coefficients are $\mathcal{F}_0$--measurable nonnegative random variables. We employ the $L^0$--Lipschitz assumptions mainly because our estimates are based on the $L^0$--norms $($more precisely, the conditional $L^p$--norms with respect to $\mathcal{F}_0)$ $|||\cdot|||_p$ on $S^p_{\mathcal{F}_0}$ or $L^p_{\mathcal{F}_0}(\mathcal{F}_T,R^d)$ rather than the usual $L^p$--norms on $S^p$ or $L^p(\mathcal{F}_T,R^d)$ as in \cite{CN17}, in fact, one can easily construct a great number of examples satisfying $L^0$--Lipschitz assumptions. Besides, $L^0$--Lipschitz assumptions also allow us to consider some time--delayed $BSDEs$ involving a random measure, for example, see Proposition\ref{proposition5.10} below.
\par
This section provides the conditional analogues to the main results of \cite{CN17} on existence and uniqueness of solutions under Lipschitz assumptions. When $\mathcal{F}_0$ is trivial $($namely, $\mathcal{F}_0$ only consists of the events $A$ with $P(A)=0$ or 1$)$, our results and the corresponding results of \cite{CN17} coincide, whereas they do not include each other in general.

\par
For the sake of convenience, for $p\in(1.+\infty]$ this paper always uses $|||\cdot|||_p$ for the conditional $p$--norms on $S^p_{\mathcal{F}_0},M^p_{\mathcal{F}_0}$ and $L^p_{\mathcal{F}_0}(\mathcal{F}_T,R^d)$, respectively, and $||\cdot||_p$ for the usual $p$--norms on $S^p,M^p$ and $L^p(\mathcal{F}_T,R^d)$, respectively. We first give the following three lemmas which expound upon some theoretical issues.

\par
Lemma\ref{lemma5.1} below is the conditional version of the classical Doob's $L^p$--maximal inequality.
\begin{lemma}\label{lemma5.1}
Let $C_p=\frac{p}{p-1}$ for $1<p<+\infty$ and $C_{\infty}=1$. Then for any $M\in M^p_{\mathcal{F}_0}$ it always holds that $|||M|||_p\leq C_p|||M_T|||_p$, in particular $|||M|||_p=|||M_T|||_p$ for $p=+\infty$.
\end{lemma}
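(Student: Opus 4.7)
The plan is to exploit the module decomposition $M^p_{\mathcal{F}_0}=L^0(\mathcal{F}_0)\cdot M^p$ in order to reduce the conditional statement to the classical Doob $L^p$--maximal inequality on the Banach space $M^p$. Given $M\in M^p_{\mathcal{F}_0}$, I would first write $M=\xi\cdot M'$ with $\xi\in L^0(\mathcal{F}_0)$ and $M'\in M^p$, so that $\sup_{t\in[0,T]}|M_t|=|\xi|\cdot\sup_{t\in[0,T]}|M'_t|$ and $|M_T|=|\xi|\cdot|M'_T|$. Because $|\xi|$ is $\mathcal{F}_0$--measurable, it can be pulled outside the generalized conditional expectation defining $|||\cdot|||_p$, and the same factor $|\xi|$ appears on both sides of the desired inequality. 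The problem therefore reduces to showing, for an ordinary $L^p$--martingale $M'\in M^p$,
\[
\bigl(E[\sup_{t\in[0,T]}|M'_t|^p\mid\mathcal{F}_0]\bigr)^{1/p}\leq C_p\bigl(E[|M'_T|^p\mid\mathcal{F}_0]\bigr)^{1/p}\ \text{a.s.}
\]

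For $1<p<+\infty$ this conditional Doob inequality follows by the standard indicator trick. For every $A\in\mathcal{F}_0$ the process $I_A\cdot M'$ is again a martingale in $M^p$, since $I_A$ is $\mathcal{F}_t$--measurable for all $t\in[0,T]$. Applying the classical Doob $L^p$--maximal inequality to $I_A\cdot M'$ gives
\[
E\bigl[I_A\cdot\sup_{t\in[0,T]}|M'_t|^p\bigr]\leq C_p^p\,E\bigl[I_A\cdot|M'_T|^p\bigr],
\]
and since $A\in\mathcal{F}_0$ is arbitrary, the defining property of conditional expectation delivers the pointwise inequality between the two $\mathcal{F}_0$--conditional expectations. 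Reintroducing the factor $|\xi|^p$ yields $|||M|||_p\leq C_p\,|||M_T|||_p$; on $\{\xi=0\}$ both sides vanish, so no separate argument is needed there.

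For $p=+\infty$ the inequality becomes an equality and a direct argument suffices. The bound $|||M_T|||_\infty\leq|||M|||_\infty$ is immediate from $|M_T|\leq\sup_t|M_t|$. For the reverse direction, let $\eta\in\bar{L}^0_+(\mathcal{F}_0)$ satisfy $|M_T|\leq\eta$; since every element of $M^p_{\mathcal{F}_0}$ is a generalized martingale, we have $M_t=E[M_T\mid\mathcal{F}_t]$, hence
\[
|M_t|\leq E[|M_T|\mid\mathcal{F}_t]\leq E[\eta\mid\mathcal{F}_t]=\eta,
\]
where the last equality uses $\mathcal{F}_0\subset\mathcal{F}_t$ to drop the conditioning on the $\mathcal{F}_0$--measurable $\eta$. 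Taking the supremum over $t\in[0,T]$ (reducing to a countable dense set via RCLL) and then the infimum over admissible $\eta$ yields $|||M|||_\infty\leq|||M_T|||_\infty$. The argument contains no serious obstacle; the only mild delicacies are verifying that $I_A M'\in M^p$ remains a true martingale for $A\in\mathcal{F}_0$ and that the generalized conditional expectation commutes with multiplication by $\mathcal{F}_0$--measurable scalars, both of which are routine.
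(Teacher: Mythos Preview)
Your proposal is correct and follows essentially the same approach as the paper's own proof: both exploit the factorization $M=\xi\cdot M'$ with $\xi\in L^0(\mathcal{F}_0)$ and $M'\in M^p$ to reduce to a conditional Doob inequality for $M'$, which is then obtained via the indicator trick $I_A M'\in M^p$ for $A\in\mathcal{F}_0$; the $p=\infty$ case is likewise handled in both by the generalized-martingale identity $M_t=E[M_T\mid\mathcal{F}_t]$ together with the RCLL property. Your treatment of the $p=\infty$ case is in fact slightly more explicit than the paper's, since you spell out why the $\mathcal{F}_0$--measurable bound $\eta$ passes through $E[\,\cdot\mid\mathcal{F}_t]$ unchanged.
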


\begin{proof}
We first consider the case when $1<p<+\infty$. Since $M=\xi\cdot \overline{M}$ for some $\xi\in L^0(\mathcal{F}_0)$ and some $\overline{M}\in M^p, |||M|||_p=|\xi|~ |||\overline{M}|||_p$ and $|||M_T|||_p=|\xi|~ |||(\overline{M})_T|||_p$, it suffices to prove that $|||\overline{M}|||_p\leq C_p |||\overline{M}_T|||_p$. For a discrete martingale, the conditional version of Doob's inequality is well known, see \cite[Lemma]{Chang94}. For $\overline{M}\in M^p$, we can similarly proceed as follows as in \cite{Chang94}: since for each $A\in \mathcal{F}_0, I_A\cdot \overline{M}$ still belongs to $M^p$, by the classical Doob's inequality $( \int_{\Omega}I_A \sup_{0\leq t\leq T} |\overline{M}_t|^pdP)^{1/p}\leq \frac{p}{p-1}( \int_{\Omega}I_A |\overline{M}_t|^pdP)^{1/p}$, then $E[(\sup_{0\leq t\leq T}|\overline{M}_t| \\
)^p ~|~\mathcal{F}_0]^{1/p}\leq \frac{p}{p-1}E[|\overline{M}_t|^p~|~\mathcal{F}_0]^{1/p}$, namely $|||\overline{M}|||_p\leq C_p|||\overline{M}_T|||_p$.
\par
For $p=+\infty$, if $\xi\in L^0_+(\mathcal{F})$ is such that $|M_T|\leq\xi$, since $M$ is a generalized martingale, $M_t=E[M_T~|~\mathcal{F}_t]$ a.s. for each fixed $t\in [0,T]$, so $|M_t(\omega)|\leq \xi(\omega)$ for almost all $\omega\in \Omega$ and any $t\in [0,T]$, then $\sup_{0\leq t\leq T}|M_t|\leq\xi$ follows from the fact that $M$ is right continuous with left limit, namely $|||M|||_{\infty}\leq \xi$. By the definition of $|||\cdot|||_{\infty}$, one can have that $|||M|||_{\infty}\leq |||M_T|||_{\infty}$, as for the converse inequality, it is obvious that $|||M_T|||_{\infty}\leq |||M|||_{\infty}$.
\end{proof}

\par
Let $(\Omega,\mathcal{F},P)$ be a probability space and $\mathcal{F}_0$ a sub $\sigma$--algebra of $\mathcal{F}$. Further, for an extended $\mathcal{F}$--measurable nonnegative random variable $\xi: \Omega \rightarrow [0,+\infty]$, whose generalized conditional mathematical expectation $E[\xi~|~\mathcal{F}_0]$ with respect to $\mathcal{F}_0$ is usually defined as the almost sure limit of the nondecreasing sequence $\{E[\xi\wedge n~|~\mathcal{F}_0]: n\in N\}$ of $\mathcal{F}_0$--measurable random variables. Clearly, $E[\xi~|~\mathcal{F}_0]$ is an extended $\mathcal{F}_0$--measurable nonnegative random variable, it is also easy to check that $\xi$ is conditionally integrable $($or $\sigma$--integrable$)$ with respect to $\mathcal{F}_0$ iff $E[\xi~|~\mathcal{F}_0]$ is almost surely finite. By the Levy's convergence theorem $\int_A\xi dP=\int _AE[\xi~|~\mathcal{F}_0]dP$ for any $A\in \mathcal{F}_0$.

\par
Lemma\ref{lemma5.2} below is a conditional version of the classical Fubini's theorem. Again, let us recall: let $(\Omega,\mathcal{A},\mu)$ be a $\sigma$--finite measure space, an extended real--valued function $f: \Omega \rightarrow [-\infty,+\infty]$ is said to be $\mu$--measurable if there exists an extended real--valued $\mathcal{A}$--measurable function $g: \Omega \rightarrow [-\infty,+\infty]$ and an $\mathcal{A}$--measurable $\Gamma$ with $\mu(\Gamma)=0$ such that $f(\omega)=g(\omega)$ for each $\omega\in \Omega\setminus \Gamma$.
\begin{lemma}\label{lemma5.2}
Let $(\Omega,\mathcal{F},P)$ be a probability space, $\mathcal{F}_0$ a sub $\sigma$--algebra of $\mathcal{F}$, $(G,\mathcal{E},\mu)$ a $\sigma$--finite measure space and $f: \Omega\times G\rightarrow [-\infty,+\infty]$ an $\mathcal{F}\otimes\mathcal{E}$--measurable function. Denote by $g$ the $\mathcal{F}$--measurable extended nonnegative function defined by $g(\omega)=\int_G|f(\omega,x)| \mu(dx)$ for any $\omega\in\Omega$. $(1)$ If $g$ is conditionally integrable with respect to $\mathcal{F}_0$, then the following three statements holds:
\begin{enumerate}[(i)]
\item For $\mu$--almost all $x$ in $G$, $f(\cdot, x)$ is conditionally integrable with respect to $\mathcal{F}_0$;
\item The function $h: \Omega\times G\rightarrow R$ defined by $h(\omega,x)=E[f(\cdot,x)~|~\mathcal{F}_0](\omega)$ for any $(\omega,x)\in \Omega\times G$ is a $P\otimes \mu$--measurable function on $(\Omega\times G,\mathcal{F}_0\otimes\mathcal{E},P\otimes\mu)$;
\item $E[\int_Gf(\cdot,x)\mu(dx)~|~\mathcal{F}_0]=\int_GE[f(\cdot,x)~|~\mathcal{F}_0]\mu(dx), P$--a.s.
\end{enumerate}
$(2)$ If $f$ is nonnegative, then the extended real--valued function $h: \Omega\times G\rightarrow [0,+\infty]$ given by $h(\omega,x)=E[f(\cdot,x)~|~\mathcal{F}_0](\omega)$ for any $(\omega,x)\in \Omega\times G$ is $P\otimes\mu$--measurable on $(\Omega\times G,\mathcal{F}_0\otimes\mathcal{E},P\otimes\mu)$, and further, one has $E[\int_Gf(\cdot,x)\mu(dx)~|~\mathcal{F}_0] \\
=\int_GE[f(\cdot,x)~|~\mathcal{F}_0]\mu(dx), P$--a.s.
\end{lemma}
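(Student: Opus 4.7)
The plan is to first establish part (2), and then reduce part (1) to part (2) together with the classical decomposition $f=f^+-f^-$. The overall strategy mimics the standard proof of Fubini's theorem, but one must carefully arrange the proof to obtain a jointly measurable version of $h(\omega,x)=E[f(\cdot,x)\mid\mathcal{F}_0](\omega)$, since conditional expectation is only defined up to $P$-null sets.

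For part (2), I would use the functional monotone class theorem. Let $\mathcal{H}$ denote the family of bounded $\mathcal{F}\otimes\mathcal{E}$-measurable functions $f:\Omega\times G\to R$ for which there exists a version $h$ of $(\omega,x)\mapsto E[f(\cdot,x)\mid\mathcal{F}_0](\omega)$ that is $\mathcal{F}_0\otimes\mathcal{E}$-measurable and for which the Fubini identity in (iii) holds. First I would verify that indicators of measurable rectangles $A\times B$ with $A\in\mathcal{F}$, $B\in\mathcal{E}$, $\mu(B)<+\infty$ lie in $\mathcal{H}$: one may take $h(\omega,x)=I_B(x)\,E[I_A\mid\mathcal{F}_0](\omega)$, and the integral identity reduces to $\mu(B)E[I_A\mid\mathcal{F}_0]=\int_G I_B(x)E[I_A\mid\mathcal{F}_0]\mu(dx)$, which is immediate. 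Linearity of $\mathcal{H}$ follows from linearity of conditional expectation and of the integral. Closure of $\mathcal{H}$ under bounded monotone pointwise limits is the core step: given $0\le f_n\uparrow f$ with $f$ bounded, one chooses versions $h_n\in\mathcal{H}$, takes $h(\omega,x):=\limsup_n h_n(\omega,x)$ (so that $h$ is automatically $\mathcal{F}_0\otimes\mathcal{E}$-measurable), and uses the classical monotone convergence theorem together with the conditional monotone convergence theorem to verify both that $h(\cdot,x)$ is a version of $E[f(\cdot,x)\mid\mathcal{F}_0]$ for $\mu$-a.e. $x$ and that (iii) persists. Invoking the functional monotone class theorem then gives $\mathcal{H}\supseteq L^\infty(\Omega\times G,\mathcal{F}\otimes\mathcal{E})$. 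Finally, for a general nonnegative $\mathcal{F}\otimes\mathcal{E}$-measurable $f$, I approximate by $f_n:=(f\wedge n)I_{\Omega\times G_n}$ where $\{G_n\}$ exhausts $G$ with $\mu(G_n)<+\infty$ (possible by $\sigma$-finiteness), and pass to the limit by one more application of the (conditional and classical) monotone convergence theorems; the resulting $h$ is $\mathcal{F}_0\otimes\mathcal{E}$-measurable with values in $[0,+\infty]$, which is the desired $P\otimes\mu$-measurability.

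For part (1), I apply part (2) to $|f|$. The hypothesis that $g(\omega)=\int_G|f(\omega,x)|\mu(dx)$ is conditionally integrable with respect to $\mathcal{F}_0$ then yields, via (iii) applied to $|f|$, that $\int_G E[|f(\cdot,x)|\mid\mathcal{F}_0]\mu(dx)=E[g\mid\mathcal{F}_0]$ is almost surely finite, so for $P\otimes\mu$-almost every $(\omega,x)$ one has $E[|f(\cdot,x)|\mid\mathcal{F}_0](\omega)<+\infty$. By the Fubini theorem on $(\Omega\times G,\mathcal{F}_0\otimes\mathcal{E},P\otimes\mu)$, for $\mu$-a.e. $x$ the random variable $E[|f(\cdot,x)|\mid\mathcal{F}_0]$ is $P$-a.s. finite, which by the characterization in the excerpt (finiteness of $E[|X|\mid\mathcal{F}_0]$ is equivalent to conditional integrability) gives (i). Assertion (ii) is then obtained by decomposing $f=f^+-f^-$, applying the measurable versions $h^\pm$ produced by part (2), and setting $h=h^+-h^-$ wherever this is finite (an $\mathcal{F}_0\otimes\mathcal{E}$-set of full $P\otimes\mu$-measure, by the previous sentence). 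Finally (iii) follows by linearity from the two instances of (iii) for $f^\pm$.

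I expect the main obstacle to be the joint measurability step in part (2), namely producing a single $\mathcal{F}_0\otimes\mathcal{E}$-measurable function $h$ that serves simultaneously as a version of $E[f(\cdot,x)\mid\mathcal{F}_0]$ for $\mu$-a.e.\ $x$; this is exactly the issue that the monotone class theorem is designed to overcome, and the trick is to fix the versions once at the level of indicators of rectangles and then propagate them through the monotone class argument without ever having to re-choose representatives. A secondary technical point is the careful use of $\sigma$-finiteness of $\mu$ when extending from bounded to general nonnegative $f$, which must be arranged so that the generalized conditional expectations involved remain well-defined $\bar{L}^0(\mathcal{F}_0)$-valued objects rather than genuinely infinite quantities on sets of positive probability; this is precisely what the hypothesis of part (1) guarantees in that branch of the argument.
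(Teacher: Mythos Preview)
Your proposal is correct, but it proceeds in the opposite order from the paper and uses a different reduction for part~(1). The paper proves (1) first: the key observation there is that conditional integrability of $g$ yields a strictly positive $\mathcal{F}_0$--measurable $\xi$ with $\xi\cdot f\in L^1(P\otimes\mu)$, so after multiplying by $\xi$ one may assume $f$ is genuinely $P\otimes\mu$--integrable (and nonnegative). Parts (i)--(iii) then follow from the ordinary Fubini theorem together with a $\lambda$--class argument for the joint measurability in (ii). Part~(2) is deduced from part~(1) by truncating $f$ to $f\wedge n$ and letting $n\to\infty$.

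Your route instead establishes (2) first via the functional monotone class theorem for bounded functions and a second monotone limit to reach general nonnegative $f$, and then obtains (1) by applying (2) to $f^{\pm}$ and subtracting. Both arguments hinge on the same monotone--class core for joint measurability; the paper's $\xi$--multiplication trick is a slightly slicker way of collapsing the ``conditionally integrable'' hypothesis to the classical integrable setting in one step, whereas your $f^{+}-f^{-}$ decomposition is the more textbook Fubini strategy and keeps the treatment of the nonnegative case cleanly separated. Either approach is fine here.
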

\begin{proof}
$(1)$ Since $g$ is conditionally integrable with respect to $\mathcal{F}_0$, there exists some $\mathcal{F}_0$--measurable function $\xi: \Omega\rightarrow R$ such that $\xi(\omega)>0$ for each $\omega\in\Omega$ and $\int_{\Omega}\xi\cdot gdP<+\infty$, namely $\xi\cdot f$ is $P\otimes \mu$--integrable. Let $\bar{f}:=\xi\cdot f$, then $f=\frac{1}{\xi}\cdot \bar{f}$, and thus we only need to prove the lemma for $\bar{f}$, namely we can, without loss of generality, assume that $f$ is $P\otimes \mu$--integrable and nonnegative.
\begin{enumerate}[(i)]
\item By Fubini's theorem, for $\mu$--almost all $x$ in $G$, $f(\cdot,x)$ is $P$--integrable, it is, of course, conditionally integrable with respect to $\mathcal{F}_0$.
\item We can, without loss of generality, assume that $\mu$ is finite. Let $\mathcal{M}=\{H\in \mathcal{F}\otimes \mathcal{E}:$ the function $h: \Omega\times G\rightarrow R$, given by $h(\omega,x)=E[\int_GI_H(\cdot,x)\mu (dx)~|
    \\~\mathcal{F}_0](\omega)$ for any $(\omega,x)\in \Omega\times G$, is $P\otimes \mu$--measurable on $(\Omega\times G,\mathcal{F}_0\otimes\mathcal{E},P\otimes\mu)\}$, then is is clear that $\mathcal{M}\supset\mathcal{F}\times\mathcal{E}:=\{A\times B: A\in\mathcal{F}$ and $B\in \mathcal{E}\}$ and $\mathcal{M}$ is a $\lambda$--class, by the monotone class theorem one has $\mathcal{M}=\mathcal{F}\otimes\mathcal{E}$. Further, let $\{f_n: n\in N\}$ be a sequence of nonnegative simple $\mathcal{F}\otimes\mathcal{E}$--measurable functions on $(\Omega\times G,\mathcal{F}\otimes\mathcal{E},P\otimes\mu)\}$ such that $\int_{\Omega\times G}|f_n-f|d(P\otimes\mu)\rightarrow 0 (n\rightarrow\infty)$, then
\begin{equation}\nonumber
\aligned
&\int_{\Omega}|E[\int_Gf(\cdot,x)\mu (dx)~|~\mathcal{F}_0](\omega)-E[\int_Gf_n(\cdot,x)\mu (dx)~|~\mathcal{F}_0](\omega)]|P(d\omega)\\
&\leq \int_{\Omega}E[\int_G|f(\cdot,x)-f_n(\cdot,x)|\mu(dx)~|~\mathcal{F}_0](\omega)P(d\omega)\\
&=\int_{\Omega\times G}|f_n(\omega,x)-f(\omega,x)|P\otimes \mu (d\omega,dx)\rightarrow 0.\\
\endaligned
\end{equation}
\par
Thus, the function $h: \Omega \times G\rightarrow R$, given by $h(\omega,x)=E[\int_Gf(\cdot,x)\mu (dx)|\mathcal{F}_0] \\
 (\omega)$ for any $(\omega,x)\in \Omega\times G$, must be $P\otimes \mu$--measurable.
\item For any $A\in\mathcal{F}_0,
 \int_AE[\int_Gf(\cdot,x)\mu (dx)~|~\mathcal{F}_0](\omega)P(d\omega)\\
 =\int_A(\int_Gf(\omega,x)\mu(dx))P(d\omega)\\
 =\int_G(\int_Af(\omega,x)P(d\omega))\mu(dx)\\
 =\int_G(\int_AE[f(\cdot,x)~|~\mathcal{F}_0](\omega)P(d\omega))\mu(dx)\\
    =\int_A(\int_GE[f(\cdot,x)~|~\mathcal{F}_0](\omega)\mu(dx))P(d\omega)$,
\end{enumerate}
namely $(iii)$ holds.\\
$(2)$ Since $f$ is nonnegative, for any $n\in N, f\wedge n$ is $P\otimes\mu$--integrable. Let $h(\omega,x)=E[f(\cdot,x)~|~\mathcal{F}_0](\omega)$ and $h_n(\omega,x)=E[f(\cdot,x)\wedge n~|~\mathcal{F}_0](\omega)$ for any $(\omega,x)\in \Omega\times G$, then each $h_n$ is $P\otimes\mu$--measurable on $(\Omega\times G,\mathcal{F}_0\otimes\mathcal{E},P\otimes\mu)$ by $(ii)$ as above, so is $h:=$ the a.s.--limit of $\{h_n:n\in N\}$.
\par
Further, for any $A\in \mathcal{F}_0$, one has:
\begin{equation}\nonumber
\aligned
&\int_A(\int_GE[f(\cdot,x)~|~\mathcal{F}_0](\omega)\mu(dx))P(d\omega)\\
&=\int_A(\int_G(lim_{n\rightarrow\infty}E[f(\cdot,x)\wedge n~|~\mathcal{F}_0](\omega))\mu(dx))P(d\omega)\\
&=\int_A(lim_{n\rightarrow\infty}\int_G(E[f(\cdot,x)\wedge n~|~\mathcal{F}_0](\omega))\mu(dx))P(d\omega)\\
&=lim_{n\rightarrow\infty}\int_{A\times G}(E[f(\cdot,x)\wedge n~|~\mathcal{F}_0](\omega))(P\otimes\mu)(d\omega,dx)\\
&=lim_{n\rightarrow\infty}\int_G(\int_A(E[f(\cdot,x)\wedge n~|~\mathcal{F}_0](\omega))P(d\omega))\mu(dx)\\
&=lim_{n\rightarrow\infty}\int_G(\int_A(f(\omega,x)\wedge n)P(d\omega))\mu(dx)\\
&=\int_{A\times G}f(\omega,x)P\otimes\mu(d\omega,dx)\\
&=\int_A(\int_Gf(\omega,x)\mu(dx))P(d\omega)\\
&=\int_A(E[\int_Gf(\cdot,x)\mu(dx)~|~\mathcal{F}_0](\omega))P(d\omega).
\endaligned
\end{equation}
\end{proof}
\begin{lemma}\label{lemma5.3}
Let $(\Omega,\mathcal{F},P)$ be a probability space and $\mathcal{F}_0$ a sub $\sigma$--algebra of $\mathcal{F}$. Then, for any $V\in L^2_{\mathcal{F}_0}({\mathcal{F}, R^d}), V-E[V~|~\mathcal{F}_0]$ and $E[V~|~\mathcal{F}_0]$ is conditionally orthogonal $($or, random orthogonal$)$, and hence $|||V-E[V~|~\mathcal{F}_0]|||_2\leq |||V|||_2$.
\end{lemma}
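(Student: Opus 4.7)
The plan is to verify the conditional orthogonality by a direct computation and then deduce the norm inequality from a conditional Pythagoras identity. The key algebraic facts are the tower property and the ``take out what is known'' (pull-out) rule for the generalized conditional expectation $E[\cdot\,|\,\mathcal{F}_0]$ introduced in the Introduction; the main technical care consists in checking that every product that appears is conditionally integrable so that these rules apply.

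First I would record that since $V\in L^2_{\mathcal{F}_0}(\mathcal{F},R^d)$, we have $E[|V|^2\,|\,\mathcal{F}_0]\in L^0_+(\mathcal{F}_0)$, and the conditional Cauchy--Schwarz (or Jensen) inequality yields $|E[V\,|\,\mathcal{F}_0]|^2\leq E[|V|^2\,|\,\mathcal{F}_0]$ a.s., so $E[V\,|\,\mathcal{F}_0]\in L^0(\mathcal{F}_0,R^d)$ and both $E[V\,|\,\mathcal{F}_0]$ and $V-E[V\,|\,\mathcal{F}_0]$ belong to $L^2_{\mathcal{F}_0}(\mathcal{F},R^d)$. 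Next, for each component, Cauchy--Schwarz gives $|\langle V,E[V\,|\,\mathcal{F}_0]\rangle|\leq |V|\,|E[V\,|\,\mathcal{F}_0]|$, and the pull-out rule combined with conditional Cauchy--Schwarz produces
\begin{equation*}
E[\,|V|\,|E[V\,|\,\mathcal{F}_0]|\,\big|\,\mathcal{F}_0\,]=|E[V\,|\,\mathcal{F}_0]|\cdot E[|V|\,|\,\mathcal{F}_0]\leq |E[V\,|\,\mathcal{F}_0]|\cdot (E[|V|^2\,|\,\mathcal{F}_0])^{1/2}<+\infty\ \text{a.s.},
\end{equation*}
so $\langle V,E[V\,|\,\mathcal{F}_0]\rangle$ is conditionally integrable with respect to $\mathcal{F}_0$. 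Applying the pull-out rule componentwise then gives
\begin{equation*}
E[\,\langle V,E[V\,|\,\mathcal{F}_0]\rangle\,\big|\,\mathcal{F}_0\,]=\langle E[V\,|\,\mathcal{F}_0],E[V\,|\,\mathcal{F}_0]\rangle=|E[V\,|\,\mathcal{F}_0]|^2,
\end{equation*}
and subtracting the trivial identity $E[\,\langle E[V\,|\,\mathcal{F}_0],E[V\,|\,\mathcal{F}_0]\rangle\,|\,\mathcal{F}_0]=|E[V\,|\,\mathcal{F}_0]|^2$ yields the conditional orthogonality
\begin{equation*}
E[\,\langle V-E[V\,|\,\mathcal{F}_0],\,E[V\,|\,\mathcal{F}_0]\rangle\,\big|\,\mathcal{F}_0\,]=0.
\end{equation*}

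Finally, I would expand the pointwise identity
\begin{equation*}
|V|^2=|V-E[V\,|\,\mathcal{F}_0]|^2+2\langle V-E[V\,|\,\mathcal{F}_0],E[V\,|\,\mathcal{F}_0]\rangle+|E[V\,|\,\mathcal{F}_0]|^2,
\end{equation*}
take $E[\cdot\,|\,\mathcal{F}_0]$ of both sides (all three terms on the right are conditionally integrable by the previous step), and use the orthogonality relation to cancel the cross term. This gives the conditional Pythagoras identity
\begin{equation*}
|||V|||_2^{\,2}=|||V-E[V\,|\,\mathcal{F}_0]|||_2^{\,2}+|E[V\,|\,\mathcal{F}_0]|^2,
\end{equation*}
from which $|||V-E[V\,|\,\mathcal{F}_0]|||_2\leq |||V|||_2$ follows by dropping the nonnegative term and taking square roots. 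The only delicate point is the bookkeeping needed to justify the pull-out rule under the generalized $($rather than classical$)$ conditional expectation, but the $L^2_{\mathcal{F}_0}$ hypothesis together with the conditional Cauchy--Schwarz inequality handles all the integrability checks uniformly.
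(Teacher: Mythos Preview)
Your proof is correct and follows essentially the same route as the paper: both verify that $E[V\,|\,\mathcal{F}_0]\in L^2_{\mathcal{F}_0}(\mathcal{F},R^d)$ via conditional Jensen, compute the conditional inner product $E[\langle V-E[V\,|\,\mathcal{F}_0],\,E[V\,|\,\mathcal{F}_0]\rangle\,|\,\mathcal{F}_0]=0$ by the pull-out rule, and then deduce the inequality from the resulting conditional Pythagoras identity. If anything, you are more careful than the paper about checking that the generalized conditional expectation rules apply to the cross term, which is a welcome addition rather than a deviation.
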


\begin{proof}
Since $L^2_{\mathcal{F}_0}(\mathcal{F},R^d)$ is a complete random inner product module $($or a conditional Hilbert space in terms of \cite{HR87} $)$ under the $L^0$--inner product $(\cdot,\cdot): L^2_{\mathcal{F}_0}(\mathcal{F},R^d)\times L^2_{\mathcal{F}_0}(\mathcal{F},R^d)\rightarrow L^0(\mathcal{F}_0)$ defined by $(x,y)=E[\sum^d_{i=1}x_i\cdot y_i~|~\mathcal{F}_0]$ for any $x=(x_1,x_2,\cdot\cdot\cdot, x_d)'$ and $y=(y_1,y_2,\cdot\cdot\cdot, y_d)'\in L^2_{\mathcal{F}_0}(\mathcal{F},R^d)$, where $'$ stands for the transposition. Let $V=(V_1,V_2,\cdot\cdot\cdot, V_d)'\in L^2_{\mathcal{F}_0}(\mathcal{F},R^d)$, then $E[V~|~\mathcal{F}_0]=(E[V_1~|~\mathcal{F}_0],E[V_2~|~\mathcal{F}_0],\cdot\cdot\cdot,E[V_d~|~\mathcal{F}_0])'$, so that $|E[V~|~\mathcal{F}_0]|^2=\sum^d_{i=1}E[V_i~|~\mathcal{F}_0]^2\leq \sum^d_{i=1}E[V_i^2~|~\mathcal{F}_0]=|||V|||^2_2$, which shows that $E[V~|~\mathcal{F}_0]\in L^2_{\mathcal{F}_0}(\mathcal{F},R^d)$ by the definition of $L^2_{\mathcal{F}_0}(\mathcal{F},R^d)$.
\par
$(V-E[V~|~\mathcal{F}_0],E[V~|~\mathcal{F}_0])=E[V\cdot E[V~|~\mathcal{F}_0]~|~\mathcal{F}_0]-E[E[V~|~\mathcal{F}_0]\cdot E[V~|~\mathcal{F}_0]~|~\mathcal{F}_0]=E[V~|~\mathcal{F}_0]\cdot E[V~|~\mathcal{F}_0]-E[V~|~\mathcal{F}_0]\cdot E[V~|~\mathcal{F}_0]=0$, where $\cdot$ stands for the inner product in $R^d$. Thus $V-E[V~|~\mathcal{F}_0]$ and $E[V~|~\mathcal{F}_0]$ is random orthogonal, further $|||V|||^2_2=|||V-E[V~|~\mathcal{F}_0]|||^2_2+|||E[V~|~\mathcal{F}_0]|||^2_2\geq|||V-E[V~|~\mathcal{F}_0]|||^2_2$.
\end{proof}

\par
Let us denote $c_2=\frac{1}{5},c_{\infty}=\frac{1}{4}$ and $c_p=\frac{p-1}{4p-1}$ for $p\in (1,\infty)\setminus \{2\}$. We start with a general existence and uniqueness result---Theorem\ref{theorem5.4} below, which can be regarded as a conditional version of \cite[Theorem3.1]{CN17}. Although the proof of Theorem\ref{theorem5.4} is very similar to that of \cite[Theorem3.1]{CN17}, we would like to give the proof of Theorem\ref{theorem5.4} in detail so that readers can easily understand our idea of conditional version.

\begin{theorem}\label{theorem5.4}
Let $\xi\in L^p_{\mathcal{F}_0}(\mathcal{F}_T,R^d)$ for some $p\in (1,+\infty]$. If $F$ is stable and there exist an integer--valued $\mathcal{F}_0$--measurable random variable $L:\Omega\rightarrow N$ and $C\in L^0_+(\mathcal{F}_0)$ with $C<c_p$ on $\Omega$ such that $|||F^{(L)}(Y,M)-F^{(L)}(Y',M')|||_p\leq C(|||Y-Y'|||_p+|||M-M'|||_p)$ for all $Y,Y'\in S^p_{\mathcal{F}_0}$ and $M,M'\in M^p_{\mathcal{F}_0,0}$, then $BSE(\ref{1.3})$ has a unique solution $(Y,M)$ in $S^p_{\mathcal{F}_0}\times M^p_{\mathcal{F}_0,0}$. When $L=k$ for some positive integer $k\in N$, the theorem still holds without the stability of $F$.
\end{theorem}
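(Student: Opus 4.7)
The plan is to reduce BSE (\ref{1.3}) to a fixed-point problem for the mapping $G : L^p_{\mathcal{F}_0}(\mathcal{F}_T, R^d) \to L^p_{\mathcal{F}_0}(\mathcal{F}_T, R^d)$ via Theorem \ref{theorem4.1}, and then to invoke the generalized Banach contraction mapping principle (Theorem \ref{theorem2.13}). The $\mathcal{T}_{\varepsilon,\lambda}$--complete RN module $L^p_{\mathcal{F}_0}(\mathcal{F}_T, R^d)$ is automatically stable, so the hypotheses of Theorem \ref{theorem2.13} are in place provided we can show $G$ is stable and $G^{(L)}$ is a strict $L^0$--contraction.

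First I would establish that $G$ is well defined and stable. Setting $M = M'$ in the $L^0$--Lipschitz hypothesis yields $|||F^{(L)}(Y,M) - F^{(L)}(Y',M)|||_p \leq C\,|||Y-Y'|||_p$ with $C < c_p \leq 1/4 < 1$; hence Lemma \ref{lemma4.3} applies (using the stability of $F$, or its final clause when $L$ is constant) and $F$ satisfies condition $(S)$. Proposition \ref{proposition4.4} then ensures that $G$ is stable. Moreover, for every $V \in L^p_{\mathcal{F}_0}(\mathcal{F}_T, R^d)$, $y^V = E_0(V) \in L^0(\mathcal{F}_0, R^d)$, $M^V \in M^p_{\mathcal{F}_0,0}$, and $Y^V \in S^p_{\mathcal{F}_0}$, so the iteration $G^{(L)}$ is well defined and stable.

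The central step is to estimate $|||G^{(L)}(V) - G^{(L)}(V')|||_p$ in terms of $|||V - V'|||_p$. For this I would chain together three conditional estimates: (i) $|||y^V - y^{V'}|||_p \leq |||V - V'|||_p$ by the conditional Jensen inequality; (ii) the conditional Doob $L^p$--maximal inequality (Lemma \ref{lemma5.1}) giving $|||M^V - M^{V'}|||_p \leq C_p\,|||M^V_T - M^{V'}_T|||_p$ with $C_p = p/(p-1)$ (and $C_\infty = 1$); (iii) a bound on $|||M^V_T - M^{V'}_T|||_p = |||(V - V') - E_0(V - V')|||_p$, which is $\leq |||V-V'|||_p$ for $p=2$ by the conditional orthogonality of Lemma \ref{lemma5.3}, and $\leq 2|||V-V'|||_p$ in general by the triangle inequality. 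Combining these with the defining equation $Y^V_t = y^V - F_t(Y^V,M^V) - M^V_t$, iterating the $F$--Lipschitz hypothesis $L$ times, and propagating through $G(V) = \xi + F_T(Y^V, M^V)$, one arrives at an estimate of the form $|||G^{(L)}(V) - G^{(L)}(V')|||_p \leq \kappa(C)\,|||V-V'|||_p$ with $\kappa(C) \in L^0_+(\mathcal{F}_0)$ satisfying $\kappa(C) < 1$ on $\Omega$ precisely when $C < c_p$ on $\Omega$. The tailored values $c_\infty = 1/4$, $c_2 = 1/5$, and $c_p = (p-1)/(4p-1)$ are designed to absorb the Doob constant $C_p$ together with the orthogonality refinement available only at $p=2$.

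Once the contraction estimate is in hand, Theorem \ref{theorem2.13} produces a unique fixed point $V^* \in L^p_{\mathcal{F}_0}(\mathcal{F}_T, R^d)$ of $G$, and Theorem \ref{theorem4.1} then delivers $(Y^{V^*}, M^{V^*})$ as the unique solution of BSE (\ref{1.3}) in $S^p_{\mathcal{F}_0} \times M^p_{\mathcal{F}_0,0}$. The main technical obstacle I anticipate is step (iii) together with the careful book-keeping of constants through the $L$--fold iteration: one must separate the cases $p=2$, $p=\infty$, and $p \in (1,\infty)\setminus\{2\}$ and choose, in each regime, the sharpest available estimate among Lemmas \ref{lemma5.1} and \ref{lemma5.3} so that the resulting contraction coefficient is strictly $L^0$--less than $1$ exactly under the threshold $C < c_p$, and not under a weaker one.
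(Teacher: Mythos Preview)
Your reduction to a fixed-point problem via Theorem~\ref{theorem4.1} and the verification of condition~$(S)$ through Lemma~\ref{lemma4.3} are correct, but the core of your plan---showing that $G^{(L)}$ is an $L^0$--contraction---rests on a misreading of the hypothesis. The iterate $F^{(L)}$ is \emph{not} an $L$--fold composition of $F$; by definition $F^{(k)}_t(Y,M)=F_t(Y^{(k,M)},M)$, where $Y^{(k,M)}$ is obtained by substituting back into the auxiliary SDE. There is no direct relationship between $G^{(L)}$ and $F^{(L)}$, so ``iterating the $F$--Lipschitz hypothesis $L$ times and propagating through $G$'' does not yield an estimate on $G^{(L)}(V)-G^{(L)}(V')$.

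The observation you are missing is that $Y^V$ is, by condition~$(S)$, the fixed point of $Y\mapsto y^V-F(Y,M^V)-M^V$; consequently $(Y^V)^{(k,M^V)}=Y^V$ for every $k$, hence $F^{(L)}(Y^V,M^V)=F(Y^V,M^V)$. This collapses the iteration and lets the Lipschitz hypothesis on $F^{(L)}$ act directly on $G(V)-G(V')=F_T(Y^V,M^V)-F_T(Y^{V'},M^{V'})$. One then writes $Y^V-Y^{V'}$ via the defining SDE, applies the $F^{(L)}$--Lipschitz bound, and solves the resulting self-referential inequality to obtain $|||Y^V-Y^{V'}|||_p\leq(1-C)^{-1}\bigl(|||E_0(V-V')-M^{V-V'}|||_p+C\,|||M^{V-V'}|||_p\bigr)$. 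Combined with Lemmas~\ref{lemma5.1} and~\ref{lemma5.3} (your items (ii)--(iii)) this gives $|||G(V)-G(V')|||_p\leq \tfrac{4C}{1-C}|||V-V'|||_p$ for $p=2$ and $\leq \tfrac{3C_pC}{1-C}|||V-V'|||_p$ otherwise, which is a random contraction precisely when $C<c_p$. So it is $G$ itself, not $G^{(L)}$, that is the contraction, and Proposition~\ref{proposition2.12} (or Theorem~\ref{theorem2.13} with trivial $L$) already suffices; the stability of $F$ enters only through Lemma~\ref{lemma4.3} to secure condition~$(S)$.
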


\begin{proof}
Since $C<1$ on $\Omega$, it follows from Lemma\ref{lemma4.3} that $F$ satisfies condition $(S)$. So by Lemma\ref{lemma4.3}, it is enough to prove that $G$ has a unique fixed point in $L^p_{\mathcal{F}_0}(\mathcal{F}_T,R^d)$. This follows from Theorem\ref{theorem2.13} if we can prove that $G$ is a random contraction on $L^p_{\mathcal{F}_0}(\mathcal{F}_T,R^d)$.
\par
Since for any $V\in L^p_{\mathcal{F}_0}(\mathcal{F}_T,R^d), Y^V$ is the unique fixed point of the mapping $Y\rightarrow E_0(V)-F(Y,M^V)-M^V$, it follows from the definition of $F^{(k)}$ that $F^{(k)}(Y^V,M^V)=F(Y^V,M^V)$ for any $k\in N$, so that $F^{(L)}(Y^V,M^V)=F(Y^V,M^V)$.
\par
Hence, one has, for all $V,V'\in L^p_{\mathcal{F}_0}(\mathcal{F}_T,R^d)$, $Y^V_t-Y^{V'}_t=E_0(V)-E_0(V')-(F^{(L)}_t(Y^V,M^V)-F^{(L)}_t(Y^{V'},M^{V'}))-(M^V_t-M^{V'}_t)=E_0(V-{V'})-M_t^{V-{V'}}-(F^{(L)}_t(Y^V,M^V)-F^{(L)}_t(Y^{V'},M^{V'}))$.
\par
Then, $\sup_{0\leq t\leq T}~|Y^V_t-Y_t^{V'}|\leq \sup_{0\leq t\leq T}~|E_0(V-V')-M_t^{V-V'}|~+~\sup_{0\leq t\leq T} \\
|F^{(L)}_t(Y^V,M^V)- F^{(L)}_t (Y^{V'},M^{V'})|$, it follows that $|||Y^V-Y^{V'}|||_p \\ \leq|||E_0(V-V')-M^{V-V'}|||_p+|||F^{(L)}(Y^V,M^V)-F^{(L)}(Y^{V'},M^{V'})|||_p \\ \leq|||E_0(V-V')-M^{V-V'}|||_p+C(|||Y^V-Y^{V'}|||_p+|||M^V-M^{V'}|||_p)$. \\
In particular, $|||Y^V-Y^{V'}|||_p\leq \frac{1}{1-C}(|||E_0(V-V')-M^{V-V'}|||_p+C|||M^{V-V'}|||_p)$, and thus we have:
\begin{equation}\nonumber
\aligned
&|||G(V)-G(V')|||_p\\
&=|||F^{(L)}_T(Y^V,M^V)-F^{(L)}_T(Y^{V'},M^{V'})|||_p\\
&\leq C(|||Y^V-Y^{V'}|||_p+|||M^V-M^{V'}|||_p)\\
&\leq \frac{C}{1-C}(|||E_0(V-V')-M^{V-V'}|||_p+C|||M^{V-V'}|||_p)+C|||M^{V-V'}|||_p\\
&=\frac{C}{1-C}(|||E_0(V-V')-M^{V-V'}|||_p+|||M^{V-V'}|||_p)
\endaligned
\end{equation}
\par
By Lemma\ref{lemma5.1}, if we let $C_p=\frac{p}{p-1}$ for $p\in (1,+\infty)$ and $C_{\infty}=1$, then $|||M^{V-V'}|||_p\leq C_p|||V-V'-E_0(V-V')|||_p$ and $|||E_0(V-V')-M^{V-V'}|||_p\leq C_p|||V-V'|||_p$.
\par
Hence, $|||M^{V-V'}|||_p$

\begin{equation*}
  \leq\left\{
\begin{aligned}
2|||V-V'-E_0(V-V')|||_2  & \leq  2|||V-V'|||_2 ~by~ Lemma\ref{lemma5.3}, for~ p=2, \\
C_p|||V-V'-E_0(V-V')|||_p& \leq  2C_p|||V-V'|||_p ~by~ Lemma\ref{lemma5.1}, for ~p\neq 2.
\end{aligned}
\right.
\end{equation*}

and $|||G(V)-G(V')|||_p$
\begin{equation*}
 \leq\left\{
\begin{aligned}
\frac{4C}{1-C}|||V-V'|||_2   &,  for~~ p=2, \\
3C_p\frac{C}{1-C}|||V-V'|||_p&,  for ~~p\neq 2.
\end{aligned}
\right.
\end{equation*}
This shows that $G$ is a random contraction.
\par
The final part of the theorem can be seen  by applying Proposition\ref{proposition2.12} instead of Theorem\ref{theorem2.13}.
\end{proof}

\par
If the generator is of integral form $F_t(Y,M)=\int^t_0f(s,Y,M)ds$ for a driver $f: [0,T]\times \Omega\times S^p_{\mathcal{F}_0}\times M^p_{\mathcal{F}_0,0}\rightarrow R^d$, then $BSE(\ref{1.3})$ becomes a $BSDE$ of the general form:
\begin{equation}\label{5.1}
  Y_t=\xi+\int^T_tf(s,Y,M)ds+M_T-M_t
\end{equation}

\par
If for an equivalence class $X$ of a $RCLL$ measurable process, one denotes $|||X|||_{p,[0,t]}:= E[(\sup_{0\leq s \leq t}|X_s|)^p~|~ \mathcal{F}_0]^{1/p}$ for $p\in (1,+\infty)$ and $|||X|||_{\infty,[0,t]}:=\bigwedge\{\eta\in \bar{L}^0_+(\mathcal{F}_0)~|~\sup_{0\leq s\leq t}|X_s|\leq\eta\}$.

\par
Proposition\ref{proposition5.5} below can be regarded as the conditional version of \cite[Proposition3.3]{CN17}.

\begin{proposition}\label{proposition5.5}
Let $\xi\in L^p_{\mathcal{F}_0}(\mathcal{F}_T,R^d)$ for some $p\in (1,+\infty]$. Then the $BSDE(5.1)$ has a unique solution $(Y,M)\in S^p_{\mathcal{F}_0}\times M^p_{\mathcal{F}_0,0}$ for every driver $f: [0,T]\times \Omega \times S^p_{\mathcal{F}_0}\times M^p_{\mathcal{F}_0,0}\rightarrow R^d$ satisfying the following conditions:
\begin{enumerate}[(i)]
\item For all $(Y,M)\in S^p_{\mathcal{F}_0}\times M^p_{\mathcal{F}_0,0}, f(\cdot,Y,M)$ is progressively measurable with $|||\int^T_0|f(t,0,0)|dt|||_p<+\infty$, $P$--a.s.
\item There exist $C_1\in L^0_{++}(\mathcal{F}_0)$ and $C_2\in L^0_+(\mathcal{F}_0)$ with $C_2<\frac{c_pC_1}{e^{C_1T}-1}$ on $\Omega$ such that $|||f(t,Y,M)-f(t,Y',M')|||_p\leq C_1|||Y-Y_0+M-(Y'-Y'_0+M')|||_{p,[0,t]}+C_2(|Y_0-Y_0'|+|||M-M'|||_p)$ for all $(Y,M),(Y',M')\in S^p_{\mathcal{F}_0}\times M^p_{\mathcal{F}_0,0}$. Where $|Y_0-Y'_0|:=(\sum^n_{i=1}(Y_{0,i}-Y'_{0,i})^2)^{1/2}$, $Y_{0,i}$ stands for the $i$--th component of the random vector $Y_0$, since $Y_0$ and $Y'_0$ are $\mathcal{F}_0$--measurable, $|Y_0-Y'_0|=|||Y_0-Y'_0|||_p$ for any $p\in (1,+\infty]$.
\end{enumerate}
\end{proposition}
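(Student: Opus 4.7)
The plan is to reduce Proposition \ref{proposition5.5} to Theorem \ref{theorem5.4} by taking the generator $F_t(Y,M):=\int_0^t f(s,Y,M)\,ds$ and verifying that it satisfies the hypotheses of that theorem for a suitably chosen $\mathcal{F}_0$-measurable random variable $L$. First I would check that $F:S^p_{\mathcal{F}_0}\times M^p_{\mathcal{F}_0,0}\to S^p_{\mathcal{F}_0,0}$ is well-defined: $F_0=0$ is immediate, while the $L^0$-bound on $|||F(Y,M)|||_p$ follows from condition (i) applied at $(0,0)$ combined with the Lipschitz condition (ii). Since (ii) makes $F$ $L^0$-Lipschitzian, Lemma \ref{lemma2.11} ensures that $F$ is stable, so Theorem \ref{theorem5.4} will apply once the iteration estimate is in hand.

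The heart of the proof is the iteration bound for $F^{(k)}$. Let $Z_s:=Y_s-Y_0+M_s$, so that $Z_0=0$ and the path-dependent quantity appearing in (ii) is precisely $|||Z-Z'|||_{p,[0,s]}$. From the recursion defining $Y^{(k,M)}$ one has $Y^{(k,M)}_s-Y_0+M_s=-F_s(Y^{(k-1,M)},M)$, so with
\[
\Delta F^{(k)}_s:=F_s(Y^{(k,M)},M)-F_s((Y')^{(k,M')},M'),\qquad u_k(t):=|||\Delta F^{(k)}|||_{p,[0,t]},
\]
a conditional Minkowski integral inequality combined with (ii) yields the recurrence
\[
u_k(t)\le C_1\int_0^t u_{k-1}(s)\,ds+C_2 B\,t,
\]
where $u_0(t)\le D:=|||Z-Z'|||_{p,[0,T]}$ and $B:=|Y_0-Y'_0|+|||M-M'|||_p$. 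A routine induction then delivers
\[
u_k(T)\le D\,\frac{(C_1 T)^k}{k!}+\frac{C_2 B}{C_1}\bigl(e^{C_1 T}-1\bigr).
\]
Since $D\le 2(|||Y-Y'|||_p+|||M-M'|||_p)$ and $B\le|||Y-Y'|||_p+|||M-M'|||_p$, this is exactly an $L^0$-Lipschitz estimate for $F^{(k)}$ with coefficient $2(C_1T)^k/k!+\frac{C_2}{C_1}(e^{C_1 T}-1)$.

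The choice of $L$ is then dictated by the hypothesis. Because $C_2<c_pC_1/(e^{C_1 T}-1)$ on $\Omega$, the quantity $\eta:=c_p-\frac{C_2}{C_1}(e^{C_1 T}-1)$ is a strictly positive element of $L^0_{++}(\mathcal{F}_0)$, and because $(C_1 T)^k/k!\to 0$ pointwise, the random variable
\[
L(\omega):=\min\bigl\{\,k\in N : 2(C_1(\omega)T)^k/k!<\eta(\omega)\,\bigr\}
\]
is well-defined, positive integer valued and $\mathcal{F}_0$-measurable. With this $L$ the Lipschitz coefficient of $F^{(L)}$ is strictly less than $c_p$ on $\Omega$, and Theorem \ref{theorem5.4} delivers the unique solution of (\ref{5.1}) in $S^p_{\mathcal{F}_0}\times M^p_{\mathcal{F}_0,0}$.

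The main obstacle I expect is the careful justification of the conditional Minkowski integral inequality $|||\sup_{0\le s\le t}|\int_0^s g_r\,dr|\,|||_p\le\int_0^t|||g_r|||_p\,dr$ uniformly in $t\in[0,T]$: for $p=\infty$ it is essentially trivial, but for $1<p<\infty$ it requires Lemma \ref{lemma5.2} (conditional Fubini) together with a conditional H\"older argument in $L^p_{\mathcal{F}_0}$. Combined with the stability of $F$, which is what makes $Y^{(L,M)}$ and $F^{(L)}$ meaningful for a genuinely random $L$, this is the technical point underpinning the Gronwall-type recursion in the random normed module setting.
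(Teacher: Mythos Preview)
Your proposal is correct and follows essentially the same route as the paper: define $F_t(Y,M)=\int_0^t f(s,Y,M)\,ds$, check it lands in $S^p_{\mathcal{F}_0,0}$ via the duality/conditional--Fubini argument you flag, note stability from Lemma~\ref{lemma2.11}, run the Gronwall--type recursion on $H^{(k)}_t=|||F^{(k)}(Y,M)-F^{(k)}(Y',M')|||_{p,[0,t]}$ to obtain exactly the bound $2\frac{(C_1T)^k}{k!}+(e^{C_1T}-1)\frac{C_2}{C_1}$, and then pick a random $L$ making this $<c_p$ before invoking Theorem~\ref{theorem5.4}. The only cosmetic difference is that the paper builds $L$ from a partition $A_k=B_k\setminus B_{k-1}$ with $B_k=\bigl(2\frac{(C_1T)^k}{k!}+(e^{C_1T}-1)\frac{C_2}{C_1}<c_p\bigr)$, whereas your $L=\min\{k:2(C_1T)^k/k!<\eta\}$ is a slightly cleaner equivalent choice.
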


\begin{proof}
Let $q=\frac{p}{p-1}\in [1,+\infty)$ and denote $L^q_{\mathcal{F}_0}(\mathcal{F}_T)=L^q_{\mathcal{F}_0}(\mathcal{F}_T,R^1)$, then $L^q_{\mathcal{F}_0}(\mathcal{F}_T)^*\cong L^p_{\mathcal{F}_0}(\mathcal{F}_T)~($namely the random conjugate space of $L^q_{\mathcal{F}_0}(\mathcal{F}_T)$ is $L^p_{\mathcal{F}_0}(\mathcal{F}_T))$, see \cite{FKV09,Guo10} for details. For any $x\in L^p_{\mathcal{F}_0}(\mathcal{F}_T)$, it is well known that $|||x|||_p=\bigvee\{|E[xy~|~\mathcal{F}_0]|: y\in L^q_{\mathcal{F}_0}(\mathcal{F}_T)$ and $|||y|||_q\leq 1\}$

\par
We want to prove that $|||\int^T_0|f(t,Y,M)|dt|||_p<+\infty$($P$--a.s.) for all $(Y,M)\in S^p_{\mathcal{F}_0}\times M^p_{\mathcal{F}_0,0}$. Since $|||\int^T_0|f(t,Y,M)|dt|||_p\leq|||\int^T_0|f(t,Y,M)-f(t,0,0)|dt|||_p+|||\int^T_0|f(t,0,0)|dt|||_p$, we only need to prove that $|||\int^T_0|f(t,Y,M)~-~f(t,0,0)|dt \\
|||_p <+\infty (P$--a.s.$)$.
\par
For each $n\in N,(\int^T_0|f(t,Y,M)-f(t,0,0)|dt)\wedge n^{\frac{1}{p}}\in L^p_{\mathcal{F}_0}(\mathcal{F}_T)$, so \\ $|||(\int^T_0|f(t,Y,M)-f(t,0,0)|dt)\wedge n^{\frac{1}{p}}|||_p\\
=\bigvee\{E[((\int^T_0|f(t,Y,M)-f(t,0,0)|dt)\wedge n^{\frac{1}{p}})\cdot |y|~|~\mathcal{F}_0]: y\in L^q_{\mathcal{F}_0}(\mathcal{F}_T)$ and $|||y|||_q\leq 1\}\\
\leq \bigvee\{E[(\int^T_0|f(t,Y,M)-f(t,0,0)|dt)\cdot |y|~|~\mathcal{F}_0]: y\in L^q_{\mathcal{F}_0}(\mathcal{F}_T)$ and $|||y|||_q\leq 1\}\\
=\bigvee\{E[\int^T_0(|f(t,Y,M)-f(t,0,0)|\cdot |y|)dt~|~\mathcal{F}_0]: y\in L^q_{\mathcal{F}_0}(\mathcal{F}_T)$ and $|||y|||_q\leq 1\}\\
=\bigvee\{\int^T_0E[(|f(t,Y,M)-f(t,0,0)|\cdot |y|)~|~\mathcal{F}_0]dt: y\in L^q_{\mathcal{F}_0}(\mathcal{F}_T)$ and $|||y|||_q\leq 1\}($by $(2)$ of Lemma\ref{lemma5.2}$)$.\\
$\leq \bigvee\{\int^T_0||||f(t,Y,M)-f(t,0,0)|||_p\cdot |||y|||_qdt: y\in L^p_{\mathcal{F}_0}(\mathcal{F}_T)$ and $|||y|||_q\leq 1\}$\\
$\leq \int^T_0||||f(t,Y,M)-f(t,0,0)|||_pdt$\\
$\leq TC_1|||Y-Y_0+M|||_p+TC_2(|Y_0|+|||M|||_p)$.\\
So $|||\int^T_0|f(t,Y,M)-f(t,0,0)|dt|||_p=lim_{n\rightarrow \infty}|||(\int^T_0|f(t,Y,M)-f(t,0,0)|dt)\wedge n^{\frac{1}{p}}|||_p\leq TC_1|||Y-Y_0+M|||_p+TC_2(|Y_0|+|||M|||_p)<+\infty($P--a.s$)$, namely $F_t(Y,M)=\int^t_0f(s,Y,M)ds$ is a well--defined mapping from $S^p_{\mathcal{F}_0}\times M^p_{\mathcal{F}_0,0}$ to $S^p_{\mathcal{F}_0,0}$. Besides, it follows from $(ii)$ that $F$ is easily checked to be stable by a similar reasoning used in the proof of Lemma\ref{lemma2.11}.
\par
For given $Y,Y'\in S^p_{\mathcal{F}_0}$ and $M,M'\in M^p_{\mathcal{F}_0,0}$, set\\
$\delta:=\frac{C_2}{C_1}(|Y_0-Y'_0|+|||M-M'|||_p),$\\
$H^0_t:=H^0:=2(|||Y-Y'|||_p+|||M-M'|||_p),$\\
$H^{(k)}_t:=|||F^{(k)}(Y,M)-F^{(k)}(Y',M')|||_{p,[0,t]}, k\in N.$\\
Then $H^{(k)}_t\leq\int^t_0|||f(s,Y^{(k,M)},M)-f(s,(Y')^{(k,M')},M')|||_pds \\
\leq\int^t_0(C_1H_s^{(k-1)}+C_2(|Y_0-Y_0'|+|||M-M'|||_p))ds  \\
\leq C_1\int^t_0(H^{(k-1)}_s+\delta)ds,$\\
and by iteration,\\
\begin{equation*}
  H^{(k)}_t\leq \frac{(C_1t)^k}{k!}H^0+(C_1+\cdot\cdot\cdot+\frac{(C_1t)^k}{k!})\delta.
\end{equation*}
In particular,\\
$|||F^{(k)}(Y,M)-F^{(k)}(Y',M')|||_p \\
\leq 2\frac{(C_1T)^k}{k!}(|||Y-Y'|||_p+|||M-M'|||_p)+(e^{C_1T}-1)\frac{C_2}{C_1}(|Y_0-Y'_0|+|||M-M'|||_p)$\\
 for any $k\in N$.
\par
Since $(e^{C_1T}-1)\frac{C_2}{C_1}<c_p$ on $\Omega$ and $\{\frac{(C_1T)^k}{k!}: k\in N\}$ converges a.s. to $0$, let $B_k=(2\frac{(C_1T)^k}{k!}+(e^{C_1T}-1)\frac{C_2}{C_1}<c_p)$ for any $k\in N$, then $\cup_{k\in N}B_k=\Omega$. Further, let $A_1=B_1$ and $A_k=B_k\setminus B_{k-1}$ for any $k\geq 2$, then $\{A_k: k\in N\}$ forms a countable partition of $\Omega$ to $\mathcal{F}_0$. Now, define $L: \Omega\rightarrow N$ by $L(\omega)=\sum^{\infty}_{k=1} k\cdot I_{A_k}(\omega)$, for any $\omega\in \Omega$, then $|||F^{(L)}(Y,M)-F^{(L)}(Y',M')|||_p\leq C(|||Y-Y'|||_p+|||M-M'|||_p)$ with $C:=\sum^{\infty}_{k=1}(2\frac{(C_1T)^k}{k!}+(e^{C_1T}-1)\frac{C_2}{C_1})\tilde{I}_{A_k}< c_p$ on $\Omega$, and the proposition follows from Theorem\ref{theorem5.4}.
\end{proof}

\begin{remark}\label{remark5.6}
Although there exists some similarity between Proposition3.3 of \cite{CN17} and Proposition\ref{proposition5.5} here, the biggest differences between them lie in the following two points: $(1)$. Here, we need a random iteration $T^{(L)}$, which is required by randomness of $C_1$ and $C_2$; $(2)$. $||Y_0-Y'_0||_p$ in \cite[Proposition3.3]{CN17} is replaced by $|Y_0-Y'_0|$ here.
\end{remark}

\par
Similarly to \cite{CN17}, we may also consider generalized Lipschitz $BSDEs$ based on a Brown motion and a Poisson random measure. For this, let $W$ be an $n$--dimensional Brown motion and $N$ an independent Poisson random measure on $[0,T]\times E$ for $E=R^m\setminus\{0\}$ with an intensity measure of the form $dt\mu(dx)$ for a measure $\mu$ over the Borel $\sigma$--algebra $\mathcal{B}(E)$ of $E$ satisfying $\int_E(1\wedge |x|^2)\mu(dx)<+\infty$.

\par
Denote by $\widetilde{N}$ the compensated random measure $N(dt,dx)-dt\mu(dx)$ and assume that, for $A\in \mathcal{B}(E)$ with $\mu(A)<+\infty, \{\widetilde{N}([0,t]\times A): t\in [0,T]\}$ and $W$ are martingales with respect to the filtration $\mathbb{F} :=(\mathcal{F}_t)_{t\in [0,T]}$. First, let us recall the following space of integrands used in \cite{CN17}:
\begin{enumerate}[(i)]
\item $H^2$: the Banach space of equivalence class of $R^{d\times n}$--valued predictable processes $Z$ satisfying $||Z||_2:=(\int^T_0E[|Z_t|^2]dt)^{1/2}<+\infty$.
\item $L^2(\widetilde{N}):$ the Banach space of equivalence classes of $\mathcal{P}\otimes \mathcal{B}(E)$--measurable mappings $U: [0,T]\times \Omega\times E\rightarrow R^d$ such that $||U||_2:=(\int^T_0(\int_EE|U_t(x)|^2\mu(dx \\
    ))dt)^{\frac{1}{2}}<+\infty$.
\end{enumerate}
Where $\mathcal{P}$ is the $\sigma$--algebra of $\mathbb{F}$--predictable subsets of $[0,T]\times \Omega$.
\par
Any square--integrable $\mathbb{F}$--martingale $M\in M^p_0$ has a unique representation of the form
\begin{equation}\label{5.2}
  M_t=\int^t_0Z^M_sdW+\int^t_0\int_EU^M_s(x)\widetilde{N}(ds,dx)+K^M_t
\end{equation}
for a triple $(Z^M,U^M,K^M)\in H^2\times L^2(\widetilde{N})\times M^2_0$ such that $K^M$ is strongly orthogonal to $W$ and $\widetilde{N}$, see, e.g., \cite{IW89,J79}. This makes it possible for Cheridito and Nam \cite{CN17} to consider $BSDEs$
\begin{equation}\label{5.3}
  Y_t=\xi+\int^T_tf(s,Y,Z^M,U^M)ds+M_T-M_t
\end{equation}
for terminal condition $\xi\in L^2(\mathcal{F}_T)^d$ and drivers $f:[0,T]\times\Omega\times S^2\times H^2\times L^2(\widetilde{N})\rightarrow R^d$.

\par
Proposition3.7, Corollary3.9 and Propositioin3.10 of \cite{CN17} are based on the existence,uniqueness and isometry of the decomposition$(5.2)$ as above. To give the conditional versions of the three propositions, we need the existence, uniqueness and isometry of the following decomposition $($see Lemma\ref{lemma5.7} below$)$ for $M\in M^2_{\mathcal{F}_0,0}$. For this, let us first introduce the following $\mathcal{T}_{\varepsilon,\lambda}$--complete $RN$ modules:
\begin{enumerate}[$H^2_{\mathcal{F}_0}:$]
\item the $\mathcal{T}_{\varepsilon,\lambda}$--complete $RN$ module of equivalence classes of $R^{d\times n}$--valued predictable processes $Z$ satisfying $|||Z|||_2:=E[\int^T_0|Z_t|^2dt~|~\mathcal{F}_0]^{\frac{1}{2}}<+\infty,~P$--a.s., which is endowed with the $L^0$--norm $|||\cdot|||_2:H^2_{\mathcal{F}_0}\rightarrow L^0_+(\mathcal{F}_0)$ given by $|||Z|||_2=E[\int^T_0|Z_t|^2dt~|~\mathcal{F}_0]^{\frac{1}{2}}$.
\end{enumerate}

\begin{enumerate}[$L^2_{\mathcal{F}_0}(\widetilde{N}):$]
\item the $\mathcal{T}_{\varepsilon,\lambda}$--complete $RN$ module of equivalence classes of $\mathcal{P}\otimes \mathcal{B}(E)$--measurable mappings $U: [0,T]\times \Omega\times E\rightarrow R^d$ such that $|||U|||_2:=E[\int^T_0(\int_E|U_t(x)|^2\mu(dx))dt~|~\mathcal{F}_0]^{\frac{1}{2}}<+\infty, P$--a.s., which is endowed with the $L^0$--norm $|||\cdot|||_2: L^2_{\mathcal{F}_0}(\widetilde{N})\rightarrow L^0_+(\mathcal{F}_0)$ given by $|||U|||_2:=E[\int^T_0(\int_E|U_t(x)|^2\mu(dx))dt~|~\mathcal{F}_0]^{\frac{1}{2}}$.
\end{enumerate}

\begin{enumerate}[$\sideset{^{\oplus}}{^2_{\mathcal{F}_0,0}}{\mathop{\mathrm{M}}}:$]
\item $=L^0(\mathcal{F}_0)\cdot \sideset{^{\oplus}}{^2_0}{\mathop{\mathrm{M}}}:=\{\xi\cdot M:\xi \in L^0(\mathcal{F}_0)$ and $M\in \sideset{^{\oplus}}{^2_0}{\mathop{\mathrm{M}}}\}$, where $\sideset{^{\oplus}}{^2_0}{\mathop{\mathrm{M}}}:=\{M\in M^2_0: M$ is strongly orthogonal to $W$ and $\widetilde{N}\}($it is easy to see that $\sideset{^{\oplus}}{^2_0}{\mathop{\mathrm{M}}}$ is a closed subspace of the Banach space $M^2_0)$. The $L^0$--norm on $\sideset{^{\oplus}}{^2_{\mathcal{F}_0,0}}{\mathop{\mathrm{M}}}$ is the restriction of the $L^0$--norm $|||\cdot|||_2$ on $M^2_{\mathcal{F}_0,0}$ to $\sideset{^{\oplus}}{^2_{\mathcal{F}_0,0}}{\mathop{\mathrm{M}}}$, it is easy to check that $(\sideset{^{\oplus}}{^2_{\mathcal{F}_0,0}}{\mathop{\mathrm{M}}}, |||\cdot|||_2)$ is a $\mathcal{T}_{\varepsilon,\lambda}$--complete $RN$ module over $R$ with base $(\Omega,\mathcal{F}_0,P)$.
\end{enumerate}

\par
According to the property of a generalized conditional mathematical expectation, it is easy to see that $L^2_{\mathcal{F}_0}(\widetilde{N})=L^0(\mathcal{F}_0)\cdot L^2(\widetilde{N})$ and $H^2_{\mathcal{F}_0}=L^0(\mathcal{F}_0)\cdot H^2$.

\par
We can now give a key lemma as follows:
\begin{lemma}\label{lemma5.7}
Every $M\in M^2_{\mathcal{F}_0,0}$ can be uniquely decomposed as $M_t=\int^t_0Z^M_sdW_s \\ +\int^t_0\int_EU^M_s(x)\widetilde{N}(ds,dx)+K^M_t($for each $t\in [0,T])$ for a triple $(Z^M,U^M,K^M)\in H^2_{\mathcal{F}_0} \times L^2_{\mathcal{F}_0}(\widetilde{N}) \times \sideset{^{\oplus}}{^2_{\mathcal{F}_0,0}}{\mathop{\mathrm{M}}}$. Further, one has the isometry

\begin{align}\label{5.4}
 E[|M_t|^2~|~\mathcal{F}_0] =&  \int ^t_0E[|Z^M_s|^2~|~\mathcal{F}_0]ds+\int^t_0(\int_EE[|U^M_s(x)|^2~|~\mathcal{F}_0]\mu(dx))ds \\ \nonumber
   +&  E[|K^M_t|^2~|~\mathcal{F}_0]
\end{align}
\end{lemma}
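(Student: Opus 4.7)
The plan is to derive Lemma~\ref{lemma5.7} by transferring the classical decomposition~(\ref{5.2}) in $M^2_0$ to $M^2_{\mathcal{F}_0,0}$ through the module identity $M^2_{\mathcal{F}_0,0}=L^0(\mathcal{F}_0)\cdot M^2_0$, and then establishing the conditional isometry via localization on $\mathcal{F}_0$--sets together with the conditional Fubini theorem (Lemma~\ref{lemma5.2}).

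For existence, I would first write $M=\xi\overline{M}$ with $\xi\in L^0(\mathcal{F}_0)$ and $\overline{M}\in M^2_0$, and apply~(\ref{5.2}) to $\overline{M}$ to obtain a triple $(Z^{\overline{M}},U^{\overline{M}},K^{\overline{M}})\in H^2\times L^2(\widetilde{N})\times \sideset{^{\oplus}}{^2_0}{\mathop{\mathrm{M}}}$. Since $\xi$ is $\mathcal{F}_0$--measurable and hence predictable, multiplication by $\xi$ commutes with both stochastic integrals, so setting $Z^M:=\xi Z^{\overline{M}}$, $U^M:=\xi U^{\overline{M}}$, $K^M:=\xi K^{\overline{M}}$ yields a decomposition of $M$ whose summands lie in $H^2_{\mathcal{F}_0}$, $L^2_{\mathcal{F}_0}(\widetilde{N})$ and $\sideset{^{\oplus}}{^2_{\mathcal{F}_0,0}}{\mathop{\mathrm{M}}}$ respectively, by the very definition of these modules as $L^0(\mathcal{F}_0)\cdot H^2$, $L^0(\mathcal{F}_0)\cdot L^2(\widetilde{N})$ and $L^0(\mathcal{F}_0)\cdot \sideset{^{\oplus}}{^2_0}{\mathop{\mathrm{M}}}$.

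For the isometry, the key observation is that for any $A\in\mathcal{F}_0$ the indicator $\tilde{I}_A$ is bounded and predictable, so $\tilde{I}_A\overline{M}\in M^2_0$ with decomposition $(\tilde{I}_AZ^{\overline{M}},\tilde{I}_AU^{\overline{M}},\tilde{I}_AK^{\overline{M}})$. The classical isometry applied to $\tilde{I}_A\overline{M}$ gives
\begin{equation*}
E[\tilde{I}_A|\overline{M}_t|^2]=\int^t_0 E[\tilde{I}_A|Z^{\overline{M}}_s|^2]\,ds+\int^t_0\!\!\int_E E[\tilde{I}_A|U^{\overline{M}}_s(x)|^2]\mu(dx)\,ds+E[\tilde{I}_A|K^{\overline{M}}_t|^2].
\end{equation*}
Invoking Lemma~\ref{lemma5.2} on each term to pull the time (and $\mu$) integral outside the conditional expectation, and using that $A\in\mathcal{F}_0$ was arbitrary, yields the identity~(\ref{5.4}) for $\overline{M}$. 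Multiplying through by $|\xi|^2$ then produces~(\ref{5.4}) for $M=\xi\overline{M}$. Uniqueness follows in parallel: if $M$ admits two such decompositions their difference is a zero process, so applying~(\ref{5.4}) to it forces each of the three nonnegative conditional $L^2$--integrals on the right-hand side to vanish, hence $\Delta Z=0$, $\Delta U=0$, $\Delta K=0$.

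The main technical obstacle is the passage through possibly unbounded $\xi\in L^0(\mathcal{F}_0)$: one cannot directly multiply the classical $L^2$--isometry by $|\xi|^2$ when $E|\xi|^2=+\infty$, and the product $\xi\overline{M}$ may fail to lie in $M^2$. This is handled by localizing through $A_n:=\{|\xi|\leq n\}\in\mathcal{F}_0$, working with $\tilde{I}_{A_n}\cdot\xi\cdot\overline{M}\in M^2_0$ inside the classical framework to establish the identity on each $A_n$, and then passing to the limit via the monotone convergence property of the generalized conditional expectation (together with $\cup_n A_n=\Omega$ up to a null set) to assemble the full conditional identity.
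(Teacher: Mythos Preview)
Your existence argument and your derivation of the conditional isometry are essentially identical to the paper's: factor $M=\xi\overline{M}$, decompose $\overline{M}$ classically, and test against $\tilde{I}_A$ for $A\in\mathcal{F}_0$ to upgrade the classical isometry to a conditional one. The paper does exactly this (writing $\hat{M}$ for your $\overline{M}$).

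The one point where you diverge from the paper is uniqueness, and here your argument has a subtle circularity. You want to apply (\ref{5.4}) to the difference triple $(\Delta Z,\Delta U,\Delta K)$ of two putative decompositions of $M$; but the isometry you have actually proved is only for the \emph{constructed} triple $(\xi Z^{\overline{M}},\xi U^{\overline{M}},\xi K^{\overline{M}})$, not for an arbitrary triple in $H^2_{\mathcal{F}_0}\times L^2_{\mathcal{F}_0}(\widetilde{N})\times \sideset{^{\oplus}}{^2_{\mathcal{F}_0,0}}{\mathop{\mathrm{M}}}$ that happens to represent $M$ (or the zero process). Applying (\ref{5.4}) to $M=0$ with its constructed triple just gives $0=0$. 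The paper avoids this by proving uniqueness \emph{first}, via a direct reduction to classical uniqueness: given any representing triple, write each component as an $L^0(\mathcal{F}_0)$--multiple of a classical element, then divide through by $L:=|\xi|+|\xi_1|+|\xi_2|+|\xi_3|+1\in L^0_{++}(\mathcal{F}_0)$ so that \emph{all} pieces simultaneously land in $M^2_0$, $H^2$, $L^2(\widetilde{N})$, $\sideset{^{\oplus}}{^2_0}{\mathop{\mathrm{M}}}$, and invoke the uniqueness of (\ref{5.2}). Your localization idea via $A_n=\{|\xi|\leq n\}$ can be adapted to do the same job---truncate on the intersection of the level sets of all the $L^0(\mathcal{F}_0)$--factors appearing in both decompositions---but as written your proof needs this extra step spelled out before (\ref{5.4}) can be used to separate the three summands.
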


\begin{proof}
For $(Z^M,U^M)\in H^2_{\mathcal{F}_0}\times L^2_{\mathcal{F}_0}(\widetilde{N})$, there exist $\xi_1\in L^0(\mathcal{F}_0)$ and $Z^{(1)}\in H^2$ such that $Z^M=\xi_1\cdot Z^{(1)}$, and there exists $\xi_2\in L^0(\mathcal{F}_0)$ and $U^{(2)}\in L^2(\widetilde{N})$ such that $Z^M=\xi_2\cdot U^{(2)}$. Clearly, stochastic integrals $\int^t_0Z^{(1)}_sdW_s$ and $\int^t_0\int_E U^{(2)}_s(x)\widetilde{N}(ds,dx)$ are both in $M^2_0$, even $\int^t_0Z^M_sdW_s$ and $\int^t_0\int_E U^M_s(x)\widetilde{N}(ds, \\
dx)$ are both well--defined and both locally square--integrable martingales, see \cite{IW89} for details. It is also obvious that $\int^t_0Z^M_sdW_s= \xi_1 \cdot\int^t_0Z^{(1)}_sdW_s$ and $\int^t_0\int_E U^M_s(x)\widetilde{N}(ds,dx)=\xi_2\cdot \int^t_0U^{(2)}_s(x)\widetilde{N}(ds,dx)$ are both generalized martingales.
\par
Since $M\in M^2_{\mathcal{F}_0,0}$, there exists $\xi \in L^0(\mathcal{F}_0)$ and $\hat{M}\in M^2_0$ such that $M=\xi\cdot \hat{M}$, then applying the decomposition$(5.2)$ to $\hat{M}$ yields a unique triple $(Z^{\hat{M}},U^{\hat{M}},K^{\hat{M}}) \\
\in H^2\times L^2(\widetilde{N})\times \sideset{^{\oplus}}{^2_0}{\mathop{\mathrm{M}}}$ such that $\hat{M}_t=\int^t_0 Z_s^{\hat{M}}dW_s+\int^t_0\int_E U_s^{\hat{M}}(x)\widetilde{N}(ds,dx)+K_t^{\hat{M}}$ for each $t\in [0,T]$, it is also well known that the following isometry holds, see, e.g.,\cite{J79}:
\begin{equation}\label{5.5}
  E[|\hat{M}_t|^2]=\int ^t_0E|Z_t^{\hat{M}}|^2ds+\int^t_0\int_EE|U_s^{\hat{M}}|^2\mu(dx)ds+E|K_t^{\hat{M}}|^2,
\end{equation}
for each $t\in[0,T]$.
\par
Thus $M_t=\xi\cdot\hat{M}_t=\int^t_0\xi\cdot Z_t^{\hat{M}}dW_s+\int^t_0\int_E\xi\cdot U_s^{\hat{M}}(x)\widetilde{N}(ds,dx)+\xi\cdot K_t^{\hat{M}}$, taking $Z^M=\xi\cdot Z^{\hat{M}}, U^M=\xi\cdot U^{\hat{M}}$ and $K^M=\xi\cdot K^{\hat{M}}$ proves the existence of $(Z^M,U^M,K^M)\in H^2_{\mathcal{F}_0}\times L^2_{\mathcal{F}_0}(\widetilde{N})\times \sideset{^{\oplus}}{^2_{\mathcal{F}_0,0}}{\mathop{\mathrm{M}}}$. We will prove the uniqueness of $(Z^M,U^M,K^M)$ as follows.
\par
Let $M=\xi\cdot \hat{M}$ be as above. Further, let $Z^M=\xi_1\cdot Z^{(1)}, U^M=\xi_2\cdot U^{(2)}$ and $K^M=\xi_3\cdot K^{(3)}$ for some $(\xi_1,\xi_2,\xi_3,Z^{(1)},U^{(2)},K^{(3)})\in L^0(\mathcal{F}_0)\times L^0(\mathcal{F}_0)\times L^0(\mathcal{F}_0)\times H^2\times L^2(\widetilde{N})\times \sideset{^{\oplus}}{^2_0}{\mathop{\mathrm{M}}}$. Putting $L:=|\xi|+|\xi_1|+|\xi_2|+|\xi_3|+1$, then $L\in L^0_{++}(\mathcal{F}_0)$. Further, denote $\bar{M}=M/L,\bar{Z}=Z^M/L,\bar{U}=U^M/L$ and $\bar{K}=K^M/L$, then $\bar{M}_t=\int^t_0 \bar{Z}_sdW_s+\int^t_0\int_E \bar{U}_s(x)\widetilde{N}(ds,dx)+\bar{K}_t$ for each $t\in [0,T]$. It is easy to observe that $\bar{M}\in M^2_0, \bar{Z}\in H^2, \bar{U}\in L^2(\widetilde{N})$ and $\bar{K}\in \sideset{^{\oplus}}{^2_0}{\mathop{\mathrm{M}}}$, it follows from the uniqueness of the decomposition$(5.2)$ for $\bar{M}$ that $(\bar{Z},\bar{U},\bar{K})$ is unique, so is $(Z^M,U^M,K^M)$.
\par
Now that we have proved the uniqueness of $(Z^M,U^M,K^M)$, then for $M = \xi\cdot\hat{M}$ with $\xi\in L^0(\mathcal{F}_0)$ and $\hat{M}\in M^2_0$, one has $Z^M=\xi\cdot Z^{\hat{M}}, U^M=\xi\cdot U^{\hat{M}}$ and $K^M=\xi\cdot K^{\hat{M}}$. Since, for any $A\in \mathcal{F}_0, I_A\cdot\hat{M}\in M^2_0,$ and $I_A\cdot\hat{M}$ can be uniquely written as $I_A\cdot\hat{M}_t=\int^t_0 I_A\cdot Z_s^{\hat{M}}dW_s+\int^t_0\int_EI_A\cdot U_s^{\hat{M}}(x)\widetilde{N}(ds,dx)+I_A\cdot K_t^{\hat{M}}$ for each $t\in [0,T]$. Again by the isometry of the decomposition$(5.2)$, one has $E[I_A|\hat{M}_t|^2]=\int^t_0E[I_A|Z_s^{\hat{M}}|^2]ds+\int^t_0\int_EE[I_A\cdot|U_s^{\hat{M}}(x)|^2]\mu(dx)ds+E[I_A|K_t^{\hat{M}}|^2]$, namely, $E[|\hat{M}_t|^2~|~\mathcal{F}_0]=\int^t_0E[|Z_s^{\hat{M}}|^2~|~\mathcal{F}_0]ds+\int^t_0\int_EE[|U_s^{\hat{M}}(x)|^2~|~\mathcal{F}_0] \mu(dx)ds+E[|K_t^{\hat{M}}|^2~|~\mathcal{F}_0]$, from which $(5.4)$ follows.
\end{proof}

\par
Lemma\ref{lemma5.7} makes it possible for us to consider $BSEDs$
\begin{equation}\label{5.6}
  Y_t=\xi+\int^T_tf(s,Y,Z^M,U^M)ds+M_T-M_t
\end{equation}
for terminal conditions $\xi\in L^2_{\mathcal{F}_0}(\mathcal{F}_T,R^d)$ and drivers
\begin{equation}\label{5.7}
 f:[0,T]\times \Omega\times S^2_{\mathcal{F}_0} \times H^2_{\mathcal{F}_0}\times L^2_{\mathcal{F}_0}(\widetilde{N})\rightarrow R^d.
\end{equation}

\begin{proposition}\label{proposition5.8}
The $BSDE(5.6)$ has a unique solution $(Y,M)\in S^2_{\mathcal{F}_0} \times M^2_{\mathcal{F}_0,0}$ for every terminal condition $\xi\in L^2_{\mathcal{F}_0}(\mathcal{F}_T,R^d)$ and driver $f:[0,T]\times \Omega\times S^2_{\mathcal{F}_0} \times H^2_{\mathcal{F}_0}\times L^2_{\mathcal{F}_0}(\widetilde{N})\rightarrow R^d$ satisfying the following two conditions:
\begin{enumerate}[(i)]
\item For all $(Y,Z,U)~\in~ S^2_{\mathcal{F}_0} ~\times~ H^2_{\mathcal{F}_0}~\times~ L^2_{\mathcal{F}_0}(\widetilde{N})$, $f(t,Y,Z,U)$ is progressively measurable with $|||\int^T_0|f(t,0,0,0)|dt|||_2<+\infty, P$--a.s.
\item There exists $C\in L^0_{+}$ such that
\begin{equation}\nonumber
\aligned
&\int^T_t|||f(s,Y,Z,U)-f(s,Y',Z',U')|||_2ds\\
&\leq C\int^T_t(|||Y_s-Y'_s|||_2+|||Z_s-Z'_s|||_2+|||U_s-U'_s|||_2)ds
\endaligned
\end{equation}
for all $t\in[0,T]$ and $(Y,Z,U),(Y',Z',U')\in S^2_{\mathcal{F}_0} \times H^2_{\mathcal{F}_0}\times L^2_{\mathcal{F}_0}(\widetilde{N})$, where $|||U_s-U'_s|||_2=E[\int_E|(U_s-U'_s)(x)|^2\mu(dx)~|~\mathcal{F}_0]^{1/2}$.
\end{enumerate}
\end{proposition}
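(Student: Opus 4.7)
The plan is to recast the BSDE \eqref{5.6} as a $BSE$ of the form \eqref{1.3} with generator $F_t(Y,M):=\int_0^t f(s,Y,Z^M,U^M)\,ds$, where the triple $(Z^M,U^M,K^M)$ is the orthogonal decomposition furnished by Lemma \ref{lemma5.7}, and then to invoke Theorem \ref{theorem5.4}. First I would check that $F$ is a well-defined map from $S^2_{\mathcal{F}_0}\times M^2_{\mathcal{F}_0,0}$ into $S^2_{\mathcal{F}_0,0}$: the duality/Minkowski argument used in the proof of Proposition \ref{proposition5.5}, combined with the isometry \eqref{5.4} (which controls $|||Z^M|||_{H^2_{\mathcal{F}_0}}$ and $|||U^M|||_{L^2_{\mathcal{F}_0}(\widetilde{N})}$ in terms of $|||M|||_2$), will yield this. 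Stability of $F$ in the sense of \eqref{4.1} then follows from the integrated Lipschitz bound $(ii)$ together with Lemma \ref{lemma2.11}.

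Next I would extract a pointwise (a.e.\ in $t$) Lipschitz estimate $|||f(t,Y,Z,U)-f(t,Y',Z',U')|||_2\le C(|||Y_t-Y'_t|||_2+|||Z_t-Z'_t|||_2+|||U_t-U'_t|||_2)$ by differentiating condition $(ii)$ in $t$. Combined with Minkowski's inequality and the isometry \eqref{5.4} (which gives $|||Z^{M-M'}|||_{H^2_{\mathcal{F}_0}}^{2}+|||U^{M-M'}|||_{L^2_{\mathcal{F}_0}(\widetilde{N})}^{2}\le|||M-M'|||_2^{2}$), I would then estimate $H^{(k)}_t:=|||F^{(k)}(Y,M)-F^{(k)}(Y',M')|||_{2,[0,t]}$ through the recursion $H^{(k)}_t\le Ct\bigl(|||Y-Y'|||_2+|||M-M'|||_2\bigr)+C\int_0^t H^{(k-1)}_s\,ds+C\sqrt{2T}\,|||M-M'|||_2$, starting from $H^{(1)}_t\le Ct\,|||Y-Y'|||_2+C\sqrt{2T}\,|||M-M'|||_2$.

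The principal obstacle is that the last term $C\sqrt{2T}\,|||M-M'|||_2$ is insensitive to the $Y$-iteration, so for an arbitrary random $C$ and fixed $T$ no choice of random iteration order $L$ will by itself drive the overall Lipschitz constant below the threshold $c_2=1/5$ of Theorem \ref{theorem5.4}. I would bypass this by splitting $[0,T]$ into $\nu$ equal subintervals, where $\nu$ is an $\mathcal{F}_0$-measurable positive integer valued random variable chosen large enough that $C\delta+C\sqrt{2\delta}<c_2/2$ on $\Omega$ with $\delta:=T/\nu$. On each subinterval Theorem \ref{theorem5.4} then applies directly (with $L=1$), yielding a unique local solution. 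To accommodate the random $\nu$, I would decompose $\Omega$ into the $\mathcal{F}_0$-measurable events $\{\nu=n\}$, $n\in N$, apply the local argument on each with a deterministic partition, and concatenate the pieces via the countable concatenation property of the $\mathcal{T}_{\varepsilon,\lambda}$-complete $RN$ modules $S^2_{\mathcal{F}_0}$ and $M^2_{\mathcal{F}_0,0}$ (Remark \ref{remark2.8}). Pasting local solutions across time by matching terminal and initial values at the subinterval endpoints, and invoking Theorem \ref{theorem4.1}, will then yield the unique global solution $(Y,M)\in S^2_{\mathcal{F}_0}\times M^2_{\mathcal{F}_0,0}$ of \eqref{5.6}.
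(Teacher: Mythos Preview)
Your proposal is correct and follows essentially the same route as the paper: both verify that $F_t(Y,M)=\int_0^t f(s,Y,Z^M,U^M)\,ds$ is well defined via the conditional isometry \eqref{5.4}, recognize that the $M$-dependent contribution cannot be made small by $Y$-iteration alone, and therefore subdivide $[0,T]$ into pieces of random length $\delta$ short enough that Theorem \ref{theorem5.4} applies with $L=1$ on each piece, then paste the local solutions. The only cosmetic difference is that the paper works directly with random subintervals $[T-j\tilde{\delta},T-(j-1)\tilde{\delta}]$ by inserting indicator functions into the driver (obtaining the bound $C\sqrt{3\delta(\delta+1)}<\tfrac{1}{5}$), whereas you propose to partition $\Omega$ by the level sets $\{\nu=n\}$ and invoke countable concatenation---an equivalent implementation.
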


\begin{proof}
Choose $\delta\in L^0_{++}(\mathcal{F}_0)$ such that $C\sqrt{3\delta(\delta+1)}<\frac{1}{5}$ on $\Omega$ and $k:=T/\tilde{\delta}$ is an $\mathcal{F}_0$--measurable positive integer--valued random variable, where $\tilde{\delta}$ is a chosen representative of $\delta$. As usual, for any $\eta\in L^0(\mathcal{F}_0), \tilde{\delta}\cdot\eta$ is interpreted as $\delta\cdot\eta$ and the integral $\int^T_{T-\tilde{\delta}}|f(s,Y,Z^M,U^M)|ds$ is interpreted as $\int^T_0I_{[T-\delta,T]}(s)\cdot|f(s,Y,Z^M,U^M)|ds$.
\par
By the isometry $(5.4)$, one has, for every $M\in M^2_{\mathcal{F}_0,0},$
\begin{equation}\label{5.8}
  \aligned
&(\int^t_0|||Z^M_s|||_2+|||U^M_s|||_2ds)^2\\
&\leq t\int^t_0(|||Z^M_s|||_2+|||U^M_s|||_2)^2ds\\
&\leq 2t \int^t_0|||Z^M_s|||^2_2+|||U^M_s|||^2_2ds\\
&\leq 2t|||M_t|||^2_2.
\endaligned
\end{equation}

Therefore, one obtains from the assumptions for all $(Y,M)\in S^2_{\mathcal{F}_0}\times M^2_{\mathcal{F}_0,0},$
\begin{equation}\nonumber
\aligned
&|||\int^T_{T-\tilde{\delta}}|f(s,Y,Z^M,U^M)|ds|||_2\\
&\leq \int^T_{T-\tilde{\delta}}|||f(s,Y,Z^M,U^M)-f(s,0,0,0)|||_2ds+|||\int^T_{T-\tilde{\delta}}|f(s,0,0,0)|dt|||_2~(by~the~\\
&same~ argument ~as~ in~ the~ proof~ of~ Proposition\ref{proposition5.5})\\
&\leq |||\int^T_{T-\tilde{\delta}}|f(s,0,0,0)|ds|||_2+C\cdot \int^T_{T-\tilde{\delta}}(|||Y_s|||_2+|||Z^M_s|||_2+|||U^M_s|||_2)ds\\
&\leq |||\int^T_0|f(s,0,0,0)|ds|||_2+C(\int^T_0|||Y|||_2ds+\int^T_0(|||Z^M_s|||_2+|||U^M_s|||_2)ds)\\
&\leq |||\int^T_0|f(s,0,0,0)|ds|||_2+CT|||Y|||_2+C\sqrt{2T}|||M|||_2(by~(5.8)).
\endaligned
\end{equation}
This shows that $F_t(Y.M):=\int^t_0 f(s,Y,Z^M,U^M)I_{[T-\tilde{\delta},T]}(s)ds$ defines a process in $S^2_{\mathcal{F}_0,0}$.
\par
Furthermore, by the isometry$(5.4)$ and the same argument as in \cite[Proposition3.7]{CN17} one only need to replace the norm $||\cdot||_2$ there with the conditional 2--norm $|||\cdot|||_2$ here to obtain that $|||F(Y,M)-F(Y',M')|||_2\leq C\sqrt{3\delta(\delta+1)}(|||Y-Y'|||_2+|||M-M'|||_2)$ for all $(Y,M),(Y',M')\in S^2_{\mathcal{F}_0}\times M^2_{\mathcal{F}_0,0}$. Since $C\sqrt{3\delta(\delta+1)} \\
<\frac{1}{5}$ on $\Omega$, one obtains from the final part of Theorem\ref{theorem5.4} that the $BSDE: Y_t=\xi+\int^T_tf(s,Y,Z^M,U^M)I_{[T-\tilde{\delta},T]}(s)ds+M_T-M_t$ has a unique solution $(Y^{(k)},M^{(k)})$ in $S^2_{\mathcal{F}_0}\times M^2_{\mathcal{F}_0,0}$. Now consider the $BSDE$
\begin{equation}\label{5.9}
  Y_t=Y^{(k)}_{T-\tilde{\delta}}+\int^{T-\tilde{\delta}}_t f^{(k-1)}(s,Y,Z^M,U^M)I_{[T-2\tilde{\delta},T-\tilde{\delta}]}(s)ds+M_{T-\tilde{\delta}}-M_t
\end{equation}
on the random time interval $[0,T-\tilde{\delta}]$, where $f^{(k-1)}$ is given by $f^{(k-1)}(s,Y,Z,U)\\
:=f(s,(Y,Z,U)I_{[0,T-\tilde{\delta}]}+(Y^{(k)}, Z^{M^{(k)}},U^{M^{(k)}})I_{[T-\tilde{\delta},T]})$. Then the conditions $(i)$--$(ii)$ still hold. So $(5.9)$ has a unique solution $(Y^{(k-1)}, M^{(k-1)})$ in $S^2_{\mathcal{F}_0}\times M^2_{\mathcal{F}_0,0}$ over the random time interval $[0,T-\tilde{\delta}]$. Repeating the same argument, one obtains solutions $(Y^{(j)},M^{(j)}), j=1,2,\cdot\cdot\cdot,\cdot\cdot\cdot,k.$ The remaining part of the proof is the same as the final part of the proof of \cite[Propositin 3.7]{CN17}, so is omitted.
\end{proof}

\begin{corollary}\label{corollary5.9}
The $BSDE$ $Y_t=\xi +\int^T_tf(s,Y_s,Z^M_s,U^M_s)ds+M_T-M_t$ has a unique solution $(Y,M)\in S^2_{\mathcal{F}_0}\times M^2_{\mathcal{F}_0,0}$ for every terminal condition $\xi\in L^2_{\mathcal{F}_0}(\mathcal{F}_T,R^d)$ and driver $f:[0,T]\times \Omega\times L^2_{\mathcal{F}_0}(\mathcal{F}_T,R^d)\times L^2_{\mathcal{F}_0}(\mathcal{F}_T,R^{d\times n})\times L^2_{\mathcal{F}_0}(\Omega\times E, \mathcal{F}_T \otimes\mathcal{B}(E), P\otimes\mu; R^d)\rightarrow R^d$ satisfying the following two conditions:
\begin{enumerate}[(i)]
\item For all $(Y,~Z,~U)\in S^2_{\mathcal{F}_0} \times H^2_{\mathcal{F}_0}\times L^2_{\mathcal{F}_0}(\widetilde{N}),~ f(\cdot~,Y_{\cdot}~,Z_{\cdot}~,U_{\cdot})$ is progressively measurable with $|||\int^T_0|f(t,0,0,0)| dt|||_2<+\infty (P$--a.s.$)$.
\item  There exists $C\in L^0_+(\mathcal{F})$ such that $|||f(t,Y_t,Z_t,U_t)-f(t,Y'_t,Z'_t,U'_t)|||_2\leq C(|||Y_t-Y'_t|||_2+|||Z_t-Z'_t|||_2+|||U_t-U'_t|||_2)$ for all $t\in[0,T]$ and $(Y,Z,U),(Y',Z',U')\in S^2_{\mathcal{F}_0} \times H^2_{\mathcal{F}_0}\times L^2_{\mathcal{F}_0}(\widetilde{N})$ where $L^2_{\mathcal{F}_0} (\Omega\times E, \mathcal{F}_T \otimes\mathcal{B}(E), P\otimes\mu; R^d)$ is the $\mathcal{T}_{\varepsilon,\lambda}$--complete $RN$ module of equivalence classes of $R^d$--valued $\mathcal{F}_T \otimes\mathcal{B}(E)$--measurable functions $U$ on $\Omega\times E$ satisfying $E[\int_E|U(\cdot,x)|^2\mu(dx)~|~\mathcal{F}_0]^{\frac{1}{2}}<+\infty$ (P--a.s.), which is endowed with the $L^0$--norm $|||\cdot|||_2$ given by $|||U|||_2=E[\int_E|U(\cdot,x)|^2\mu(dx)~|~\mathcal{F}_0]^{\frac{1}{2}}$ for any $U\in L^2_{\mathcal{F}_0}(\Omega\times E, \mathcal{F}_T \otimes\mathcal{B}(E), P\otimes\mu; R^d)$. It is also easy to check that $L^2_{\mathcal{F}_0}(\Omega\times E, \mathcal{F}_T \otimes\mathcal{B}(E), P\otimes\mu; R^d)=L^0(\mathcal{F}_0)\cdot L^2(\Omega\times E, \mathcal{F}_T \otimes\mathcal{B}(E), P\otimes\mu; R^d)$.
\end{enumerate}
\end{corollary}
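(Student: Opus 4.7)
The plan is to derive Corollary \ref{corollary5.9} as a direct consequence of Proposition \ref{proposition5.8} by reducing the pointwise driver to a process-dependent one. Given the pointwise driver $f$ in the statement, I would introduce the auxiliary process driver
\[
\tilde{f}: [0,T] \times \Omega \times S^2_{\mathcal{F}_0} \times H^2_{\mathcal{F}_0} \times L^2_{\mathcal{F}_0}(\widetilde{N}) \to R^d,
\qquad \tilde{f}(s,\omega, Y, Z, U) := f(s, \omega, Y_s, Z_s, U_s(\cdot)),
\]
where $U_s(\cdot)$ is understood as the element of $L^2_{\mathcal{F}_0}(\Omega \times E, \mathcal{F}_T \otimes \mathcal{B}(E), P \otimes \mu; R^d)$ obtained by freezing the time variable at $s$. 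The BSDE governed by $\tilde{f}$ then coincides with the one in the statement of the corollary, and its unique solvability in $S^2_{\mathcal{F}_0} \times M^2_{\mathcal{F}_0,0}$ will follow once I verify that $\tilde{f}$ meets the two hypotheses of Proposition \ref{proposition5.8}.

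First, I would check that $\tilde{f}(\cdot, Y, Z, U)$ is progressively measurable and that $|||\int_0^T |\tilde{f}(t, 0, 0, 0)| dt|||_2 < +\infty$ ($P$--a.s.), which is essentially a restatement of condition (i) of the corollary together with the fact that for $Y \in S^2_{\mathcal{F}_0}$, $Z \in H^2_{\mathcal{F}_0}$ and $U \in L^2_{\mathcal{F}_0}(\widetilde{N})$, the sections $Y_s$, $Z_s$ and $U_s(\cdot)$ land $dt \otimes dP$-a.e. in the respective input spaces of $f$. Next, using the pointwise Lipschitz condition (ii) of the corollary, I would integrate from $t$ to $T$ to obtain
\[
\int_t^T |||\tilde{f}(s, Y, Z, U) - \tilde{f}(s, Y', Z', U')|||_2 \, ds \le C \int_t^T \bigl( |||Y_s - Y'_s|||_2 + |||Z_s - Z'_s|||_2 + |||U_s - U'_s|||_2 \bigr) ds,
\]
which is exactly condition (ii) of Proposition \ref{proposition5.8} for $\tilde{f}$.

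The main subtlety, and the step I expect to need the most care, is the measurability/integrability bookkeeping in the first verification: one must make sure that $s \mapsto f(s, Y_s, Z_s, U_s)$ inherits progressive measurability from the joint progressive measurability of $(s, \omega, y, z, u) \mapsto f(s, \omega, y, z, u)$ and that the $|||\cdot|||_2$-valued map $s \mapsto |||f(s, Y_s, Z_s, U_s)|||_2$ is Lebesgue-measurable so the integral estimate is meaningful. Once this is in place, applying Proposition \ref{proposition5.8} to $(\tilde{f}, \xi)$ yields a unique pair $(Y, M) \in S^2_{\mathcal{F}_0} \times M^2_{\mathcal{F}_0,0}$ solving $Y_t = \xi + \int_t^T f(s, Y_s, Z^M_s, U^M_s) ds + M_T - M_t$, which is exactly the conclusion of Corollary \ref{corollary5.9}.
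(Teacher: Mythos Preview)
Your proposal is correct and matches the paper's approach: the paper presents Corollary~\ref{corollary5.9} without a separate proof, treating it as the immediate specialization of Proposition~\ref{proposition5.8} obtained by setting $\tilde f(s,Y,Z,U)=f(s,Y_s,Z_s,U_s)$, exactly as you outline. The measurability bookkeeping you flag is indeed the only point requiring care, and the paper implicitly absorbs it into condition~(i).
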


\par
Allowing Lipschitz coefficients to be random makes it possible for us to consider the following time--delayed $BSDEs$ involving a random measure $v$. Let us recall that $v: \Omega\times \mathcal{B}[0,T]\rightarrow R^1$ is called an $\mathcal{F}_0$--measurable finite Borel random measure if, for $\omega\in\Omega,v(\omega,\cdot): \mathcal{B}[0,T]\rightarrow R^1$ is a finite Borel measure on $[0,T]$, and, for each Borel subset $B$ of $[0,T], v(\cdot,B):\Omega \rightarrow R^1$ is $\mathcal{F}_0$--measurable.

\begin{proposition}\label{proposition5.10}
Let $\xi\in L^2_{\mathcal{F}_0}(\mathcal{F}_T,R^d)$ and $v$ be an $\mathcal{F}_0$--measurable finite Borel random measure on $[0,T]$. Then the $BSDE$
\begin{equation}\label{5.10}
 Y_t=\xi+\int^T_t\int_{[0,s]}g(s-r,Z^M_{s-r},U^M_{s-r})v(dr)ds+M_T-M_t
\end{equation}
has a unique solution $(Y,M)\in S^2_{\mathcal{F}_0}\times M^2_{\mathcal{F}_0,0}$ for each mapping $g:[0,T]\times\Omega\times L^2_{\mathcal{F}_0}(\mathcal{F}_T,R^{d\times n})\times L^2_{\mathcal{F}_0}(\Omega\times E, \mathcal{F}_T \otimes\mathcal{B}(E), P\otimes\mu; R^d)\rightarrow R^d$ satisfying the two conditions:\\
$(i)$ For all $(Z,U)\in H^2_{\mathcal{F}_0}\times L^2_{\mathcal{F}_0}(\widetilde{N}), g(\cdot ,\cdot ,Z_{\cdot},U_{\cdot})$ is progressively measurable and $|||\int^T_0|g(t,0,0)|dt|||_2<+\infty(P$--a.s.$)$.\\
$(ii)$ There exists $C_1\in L^0_+(\mathcal{F})$ such that
 \begin{equation*}
  |||g(t,Z_t,U_t)-g(t,Z'_t,U'_t)|||_2\leq C_1(|||Z_t-Z'_t|||_2+|||U_t-U'_t|||_2)
\end{equation*}
for all $t\in [0,T]$ and $(Z,U),(Z',U')\in H^2_{\mathcal{F}_0}\times L^2_{\mathcal{F}_0}(\widetilde{N})$.
\end{proposition}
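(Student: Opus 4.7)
The plan is to cast the time-delayed BSDE (5.10) as an instance of the BSE (1.3) by setting
\begin{equation*}
F_t(Y, M) := \int_0^t \int_{[0,s]} g(s-r, Z^M_{s-r}, U^M_{s-r})\, v(dr)\, ds, \quad t \in [0,T],
\end{equation*}
which does not depend on $Y$. Hence Corollary \ref{corollary4.2} reduces the problem to finding a unique fixed point of $G_0(V) := G(V) - E_0(G(V))$, with $G(V) = \xi + F_T(M^V)$, in the $\mathcal{T}_{\varepsilon,\lambda}$--closed stable submodule $L^2_{\mathcal{F}_0,0}(\mathcal{F}_T, R^d)$.

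First I would verify that $F$ is a well-defined stable mapping into $S^2_{\mathcal{F}_0, 0}$, using hypothesis $(i)$, the isometric decomposition of Lemma \ref{lemma5.7}, conditional Fubini (Lemma \ref{lemma5.2}), conditional Minkowski, and Lemma \ref{lemma2.11}. Then I would derive the crucial $L^0$--Lipschitz estimate for $F$ in $M$. Interchanging the order of integration via Fubini and the change of variable $u = s-r$ gives
\begin{equation*}
F_T(M) - F_T(M') = \int_0^T \bigl[g(u, Z^M_u, U^M_u) - g(u, Z^{M'}_u, U^{M'}_u)\bigr]\, v([0, T-u])\, du,
\end{equation*}
and combining hypothesis $(ii)$, Cauchy--Schwarz in $du$, and the isometry of Lemma \ref{lemma5.7} yields $|||F(M) - F(M')|||_2 \leq \sqrt{2T}\, v([0,T])\, C_1\, |||M - M'|||_2$, which together with Lemmas \ref{lemma5.1} and \ref{lemma5.3} bounds $|||G_0(V) - G_0(V')|||_2$ by $2\sqrt{2T}\, v([0,T])\, C_1\, |||V - V'|||_2$.

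The main obstacle is that the raw constant $2\sqrt{2T}\, v([0,T])\, C_1$ may exceed the smallness threshold $c_2 = 1/5$ required by Theorem \ref{theorem5.4}, and since $F$ is $Y$--independent, the random Picard iteration $F^{(L)}$ used in Proposition \ref{proposition5.5} collapses to $F$ and provides no improvement. I would resolve this by working with an equivalent weighted $L^0$--norm $|||\cdot|||_{2,\beta}$ on $M^2_{\mathcal{F}_0, 0}$ defined through the exponentially weighted Brownian--Poisson decomposition of Lemma \ref{lemma5.7}, for some $\mathcal{F}_0$--measurable parameter $\beta \in L^0_{++}(\mathcal{F}_0)$ to be chosen. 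Repeating the Fubini--Cauchy--Schwarz calculation with weight $e^{-\beta(T-u)}$ inserted in the outer $du$--integral refines the estimate to $|||G_0(V) - G_0(V')|||_{2,\beta} \leq q(\beta)\, |||V - V'|||_{2,\beta}$ with an $\mathcal{F}_0$--measurable random constant $q(\beta)$ that can be made strictly less than $1$ on $\Omega$ by a suitably large random choice of $\beta$. Since the weighted norm is equivalent to $|||\cdot|||_2$ for each fixed $\beta$, the $\mathcal{T}_{\varepsilon,\lambda}$--completeness of $L^2_{\mathcal{F}_0, 0}(\mathcal{F}_T, R^d)$, its stability, and (via Proposition \ref{proposition4.4}) the stability of $G_0$ are all preserved. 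Applying Theorem \ref{theorem2.13} in this weighted-norm space then yields the unique fixed point of $G_0$, and Corollary \ref{corollary4.2} produces the unique solution of (5.10) in $S^2_{\mathcal{F}_0} \times M^2_{\mathcal{F}_0, 0}$.
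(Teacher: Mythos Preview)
Your reduction of (5.10) to the BSE framework, the observation that $F$ is $Y$--independent so that condition $(S)$ is automatic, and the Fubini/change--of--variable identity
\[
F_T(M)-F_T(M')=\int_0^T v([0,T-u])\bigl[g(u,Z^M_u,U^M_u)-g(u,Z^{M'}_u,U^{M'}_u)\bigr]\,du
\]
are all correct and coincide with the paper's argument. From this point, however, the paper takes a much shorter route: the integrand on the right is $h(u,Z_u,U_u):=v([0,T-u])\,g(u,Z_u,U_u)$, a \emph{Markovian} driver satisfying the hypotheses of Corollary~\ref{corollary5.9} with $L^0$--Lipschitz constant $C=v([0,T])\cdot C_1$. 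Corollary~\ref{corollary5.9} (via the random time--partition of Proposition~\ref{proposition5.8}) already handles arbitrary $L^0$--Lipschitz constants, so it applies directly and yields existence and uniqueness; Theorem~\ref{theorem4.1} then transports this back to the fixed--point formulation for the original generator. No weighted norm is needed.

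Your proposed weighted--norm contraction for $G_0$ has a genuine gap. The weighted norm $|||W|||_{2,\beta}$ of $W\in L^2_{\mathcal{F}_0,0}(\mathcal{F}_T,R^d)$ is defined through the $(Z,U,K)$--decomposition of the martingale $M^W_t=-E_t[W]$. For $W=G_0(V)-G_0(V')$ this decomposition depends on how the integrand $v([0,T-u])\,[g(u,Z^{M^V}_u,U^{M^V}_u)-g(u,\ldots)]$ is represented against $W$ and $\widetilde N$, which in turn depends on the structure of $g$; there is no general relation to $(Z^{M^{V-V'}},U^{M^{V-V'}},K^{M^{V-V'}})$. The only model--free bound is the crude norm equivalence $|||W|||_{2,\beta}\le e^{\beta T/2}|||W|||_2$ (for weights $e^{\beta u}$, say), and combining this with your weighted Cauchy--Schwarz gives a contraction constant of order $e^{\beta T/2}\beta^{-1/2}$, which does \emph{not} tend to zero as $\beta$ grows. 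The classical weighted--norm technique for BSDEs works because the iteration is carried out on the pair $(Y,Z)$ and It\^o's formula applied to $e^{\beta t}|Y_t-Y'_t|^2$ produces the weighted integrals directly; that mechanism is unavailable for the terminal--value map $G_0$. The clean fix is precisely the paper's: after your Fubini step, invoke Corollary~\ref{corollary5.9} on the driver $h$.
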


\begin{proof}
The generator corresponding to the $BSDE(5.10)$ is given by $F_t(M)=\int^t_0\int_{[0,s]}g(s-r,Z^M_{s-r},U^M_{s-r})v(dr)ds$. Since it does not depend on $Y$, it satisfies condition$(S)$. So, by Theorem\ref{theorem4.1}, it is enough to show that there exists a unique $V\in L^2_{\mathcal{F}_0}(\mathcal{F}_T,R^d)$ such that
\begin{equation}\label{5.11}
  V=G(V)=\xi+\int^T_0\int_{[0,s]}g(s-r,Z^{M^V}_{s-r},U^{M^V}_{s-r})v(dr)ds.
\end{equation}
From Fubini's theorem and a change of variable, one obtains $\int^T_0\int_{[0,s]}g(s-r,Z^{M^V}_{s-r},U^{M^V}_{s-r})v(dr)ds=\int^T_0v([0,T-s])g(s,Z_s^{M^V},U_s^{M^V})ds$. Since the driver $h(s,Z_s,V_s)=v([0,T-s])g(s,Z_s,U_s)$ satisfies the conditions of Corollary $5.9$ for the $L^0$--Lipschitz coefficient $C:=v[0,T]\cdot C_1$, the $BSDE~~Y_t=\xi+\int^T_t h(s,Z^M_s,U^M_s) \\
ds+M_T-M_t$ has a unique solution $(Y,M)\in S^2_{\mathcal{F}_0}\times M^2_{\mathcal{F}_0,0}$. The associated generator, $\widetilde{F}_t(M)=\int^T_0h(s,Z^M_s,U^M_s)ds$, does not depend on $Y$ either. So it also satisfies condition $(S)$, and one obtains from Theorem\ref{theorem4.1} that there exists a unique $V\in L^2_{\mathcal{F}_0}(\mathcal{F}_T,R^d)$ satisfies $(5.11)$.
\end{proof}

\section{Existence of solutions of $BSEs$ and $BSDEs$ of nonexpansive type}\label{section6}

\par
~~~~$BSE(\ref{1.1})$ and the corresponding $BSDEs$ have been thoroughly studied in \cite{CN17} by making full use of Banach's contraction mapping principle and Krasnoselskii's fixed theorem. In this section we first make full use of the classical Browder--Kirk's fixed point theorem to continue the study of $BSE(\ref{1.1})$, then we make full use of our generalized Browder--Kirk's fixed point theorem given in Section\ref{section3} to study $BSE(\ref{1.3})$. Results in this section are completely new.
\par
Throughout this section, we always assume $p\in (1,+\infty)$. $BSE(\ref{1.1})$ is said to be of nonexpansive type if the generator $F: S^p\times M^p_0\rightarrow S^p_0$ satisfies condition $(S)$ and for a given terminal condition $\xi\in L^p(\mathcal{F}_T)^d(:=L^p(\mathcal{F}_T,R^d))$ the corresponding operator $G$ determined by $(F,\xi)$ is nonexpansive, namely $\|G(V)-G(V')||_p\leq||V-V'||_p$ for all $V,V'\in L^p(\mathcal{F}_T)^d($please bear in mind: $G(V)=\xi+F_T(Y^V,M^V)$ for any $V\in L^p(\mathcal{F}_T)^d)$. Similarly, one can have the notion of ``$BSE(\ref{1.3})$ being of nonexpansive type".
\par
Theorem\ref{theorem6.1} below is almost clear, but since it is frequently employed in the sequel, we would like to summarize and prove it.

\begin{theorem}\label{theorem6.1}
$BSE(\ref{1.1})$ has a solution $(Y,M)\in S^p\times M^p_0$ if it is of nonexpansive type and there exists a bounded closed convex subset $H$ of $L^p(\mathcal{F}_T)^d$ such that $G$ maps $H$ into $H($namely $G(H)\subset H)$.
\end{theorem}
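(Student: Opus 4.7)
The plan is to invoke the classical Browder--Kirk fixed point theorem on the restriction $G|_H : H \to H$ and then translate the resulting fixed point into a solution of $BSE(\ref{1.1})$ via the bijection $\pi \leftrightarrow \phi$ described in Section \ref{section4} (which for the $S^p \times M^p_0$ setting is exactly Theorem~2.3 of \cite{CN17}).

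First I would verify that the hypotheses of Browder--Kirk's theorem are met for the pair $(H, G|_H)$ inside the Banach space $L^p(\mathcal{F}_T)^d$. Since $p \in (1,+\infty)$, the Banach space $L^p(\mathcal{F}_T)^d$ is uniformly convex, and therefore every bounded closed convex subset of it is reflexive (hence weakly compact) and has normal structure. The set $H$ is assumed bounded, closed and convex, so these two properties hold for $H$ automatically. By hypothesis, $G(H) \subset H$, and the assumption that $BSE(\ref{1.1})$ is of nonexpansive type says precisely that $\|G(V)-G(V')\|_p \le \|V-V'\|_p$ for all $V, V' \in L^p(\mathcal{F}_T)^d$, so in particular $G|_H : H \to H$ is nonexpansive. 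Browder--Kirk then yields a point $V^{*} \in H$ with $G(V^{*}) = V^{*}$.

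Second, I would transfer this fixed point back to the $BSE$. Because $F$ satisfies condition $(S)$ (built into the definition of nonexpansive type), the mapping $\phi(V) = (Y^V, M^V)$ is defined on all of $L^p(\mathcal{F}_T)^d$, and the classical counterpart of Theorem \ref{theorem4.1} (i.e.\ Theorem~2.3 of \cite{CN17}) says that $\phi$ sends every fixed point of $G$ to a solution of $BSE(\ref{1.1})$ in $S^p \times M^p_0$. Applying this to $V^{*}$ gives $(Y,M) := \phi(V^{*}) = (Y^{V^{*}}, M^{V^{*}}) \in S^p \times M^p_0$, which is a solution of $BSE(\ref{1.1})$.

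There is essentially no technical obstacle: the proof is a clean two-line combination of (a) uniform convexity of $L^p$ ensuring the geometric hypotheses of Browder--Kirk, and (b) the already-established bijection between fixed points of $G$ and solutions of $BSE(\ref{1.1})$. The only point worth stating explicitly in the write-up is why bounded closed convex sets in $L^p(\mathcal{F}_T)^d$ automatically inherit \emph{both} weak compactness (from reflexivity, which follows from uniform convexity via the Milman--Pettis theorem) and normal structure (a standard consequence of uniform convexity).
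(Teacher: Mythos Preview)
Your proposal is correct and follows essentially the same route as the paper's own proof: uniform convexity of $L^p(\mathcal{F}_T)^d$ gives that $H$ is weakly compact with normal structure, Browder--Kirk yields a fixed point of $G$, and Theorem~2.3 of \cite{CN17} converts it into a solution of $BSE(\ref{1.1})$. The paper's proof is just the terse two-line version of what you wrote out in detail.
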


\begin{proof}
Since $1<p<\infty, L^p(\mathcal{F}_T)^d$ is uniformly convex, then $H$ is weakly compact with normal structure, so $G$ has a fixed point in $H$ by the Browder--Kirk's fixed point theorem, which means that $BSE(\ref{1.1})$ has a solution by Theorem2.3 of \cite{CN17}.
\end{proof}

\begin{corollary}\label{corollary6.2}
$BSE(\ref{1.1})$ has a solution $(Y,M)\in S^p\times M^p_0$ if it is of nenexpansive type and there exists a positive number $R_1$ such that $||\xi||_p~+~\sup\{||F_T(Y^V, \\
M^V )||_p:V\in L^p(\mathcal{F}_T)^d$ and $||V||_p\leq R_1\}~\leq~ R_1$. In particular, when $\{F_T(Y^V, \\
M^V): V\in L^p(\mathcal{F}_T)^d\}$ is a bounded set of $L^p(\mathcal{F}_T)^d$, such a $R_1$  always exists.
\end{corollary}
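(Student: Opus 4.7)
The plan is to apply Theorem \ref{theorem6.1} directly, using the closed ball of radius $R_1$ in $L^p(\mathcal{F}_T)^d$ as the invariant set $H$. Concretely, I would set $H := \{V \in L^p(\mathcal{F}_T)^d : \|V\|_p \leq R_1\}$, which is clearly a bounded, closed, and convex subset of $L^p(\mathcal{F}_T)^d$, so the hypothesis of Theorem \ref{theorem6.1} about $H$ being bounded closed convex is automatic.

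The key step is to verify the invariance $G(H) \subset H$. For any $V \in H$, one has $G(V) = \xi + F_T(Y^V,M^V)$, and by the triangle inequality
\begin{equation*}
\|G(V)\|_p \;\leq\; \|\xi\|_p + \|F_T(Y^V,M^V)\|_p \;\leq\; \|\xi\|_p + \sup\{\|F_T(Y^{V'},M^{V'})\|_p : V' \in L^p(\mathcal{F}_T)^d,\ \|V'\|_p \leq R_1\},
\end{equation*}
which by hypothesis is bounded above by $R_1$. Hence $G(V) \in H$, and Theorem \ref{theorem6.1} yields a fixed point of $G$ in $H$, which by Theorem 2.3 of \cite{CN17} gives a solution $(Y,M) \in S^p \times M^p_0$ of $BSE(\ref{1.1})$.

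For the ``in particular'' part, I would assume that $\{F_T(Y^V,M^V) : V \in L^p(\mathcal{F}_T)^d\}$ is a bounded subset of $L^p(\mathcal{F}_T)^d$, and let $C := \sup\{\|F_T(Y^V,M^V)\|_p : V \in L^p(\mathcal{F}_T)^d\} < +\infty$. Then simply take $R_1 := \|\xi\|_p + C$; the quantity on the left-hand side of the required inequality is at most $\|\xi\|_p + C = R_1$, so such an $R_1$ exists. There is no genuine obstacle here: the statement is essentially a packaging of Browder--Kirk (via Theorem \ref{theorem6.1}) into a verifiable growth/boundedness estimate on $F_T$, so the whole argument is a short direct verification.
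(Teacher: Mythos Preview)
Your proposal is correct and essentially identical to the paper's own proof: it too takes $H=\{V\in L^p(\mathcal{F}_T)^d:\|V\|_p\le R_1\}$, notes that $G(H)\subset H$, and invokes Theorem~\ref{theorem6.1}; for the ``in particular'' part it likewise sets $R_1=\|\xi\|_p+\sup\{\|F_T(Y^V,M^V)\|_p:V\in L^p(\mathcal{F}_T)^d\}$.
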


\begin{proof}
Let $H=\{V\in L^p(\mathcal{F}_T)^d: ||V||_2\leq R_1\}$, then $G(H)\subset H$, so the proof follows from Theorem\ref{theorem6.1}. When $R_2:=\sup\{||F_T(Y^V,M^V)||_2:V\in L^p(\mathcal{F}_T)^d\}<+\infty$, taking $R_1=||\xi||_2+R_2$ ends the proof of this corollary.
\end{proof}

\begin{example}\label{example6.3}
Let $f: R^d\rightarrow R^d$ be a bounded Lipschitzian function such that $f(0)=0$ and the Lipschitz constant of $f$ is 1. Define $F: S^p\times M^p_0\rightarrow S^p_0$ by $F_t(Y,M)=\frac{1}{2C_p}f[\frac{t}{T}(Y_0-M_t)]$ for all $(Y,M)\in S^p\times M^p_0$ and $t\in [0,T]$, where $C_p=\frac{p}{p-1}$. Then $BSE(\ref{1.1})$ has a solution.
\end{example}

\begin{proof}
It is obvious that $F$ satisfies condition $(S)$. Further, $||F_T(Y,M)-F_T(Y',M')||_p\leq \frac{1}{2C_p}||(Y_0-M_T)-(Y'_0-M'_T)||_p$ for all $(Y,M),(Y',M')\in  S^p\times M^p_0$, so $||G(V)-G(V')||_p\leq \frac{1}{2C_p} ||Y^V_0-Y^{V'}_0-(M^V_T-M_T^{V'})||_p= \frac{1}{2C_p}||E_0(V-V')-(V-V')||_p\leq ||V-V'||_p$ for all $V,V'\in L^p(\mathcal{F}_T)^d$, which shows that the corresponding $BSE(\ref{1.1})$ is of nonexpansive type. Finally, since $f$ is bounded, $\{F_T(Y^V,M^V): V\in L^p(\mathcal{F}_T)^d\}$ is a bounded set, and hence $BSE(\ref{1.1})$ has a solution by Corollary\ref{corollary6.2}.
\end{proof}

\par
Example\ref{example6.3} motivates the following:
\begin{proposition}\label{proposition6.4}
If the generator $F:S^p\times M^p_0\rightarrow S^p_0$ satisfies the following two conditions:
\begin{enumerate}[(i)]
\item There exists some positive integer $k\in N$ such that $||F^{(k)}(Y,M)-F^{(k)}(Y', \\
M')||_p\leq \frac{1}{2C_p}||(Y_0-M)-(Y_0'-M')||_p$ for all $(Y,M),(Y',M')\in S^p\times M^p_0$.
\item There exists a positive number $R_1$ such that $||\xi||_p+sup\{||F_T(Y^V,M^V)||_p: V\in L^p(\mathcal{F}_T)^d$ and $||V||_p\leq R_1\}\leq R_1$.
\end{enumerate}
 Then $BSE(\ref{1.1})$ has a solution $(Y,M)\in S^p\times M^p_0$.
\end{proposition}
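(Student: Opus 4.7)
The plan is to reduce Proposition \ref{proposition6.4} to Theorem \ref{theorem6.1}. I would first argue that condition (i) forces $F$ to satisfy condition $(S)$ and makes the operator $G$ on $L^p(\mathcal{F}_T)^d$ (in fact strictly) nonexpansive; then condition (ii) provides a closed ball in $L^p(\mathcal{F}_T)^d$ that is mapped into itself by $G$, and since $1<p<+\infty$ makes $L^p(\mathcal{F}_T)^d$ uniformly convex, Theorem \ref{theorem6.1} delivers a fixed point of $G$, which translates into a solution of $BSE(\ref{1.1})$ via Theorem 2.3 of \cite{CN17}.

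For condition $(S)$, fix $y\in L^p(\mathcal{F}_0)^d$ and $M\in M^p_0$, and define $J:\{Y\in S^p:Y_0=y\}\to\{Y\in S^p:Y_0=y\}$ by $J(Y)_t=y-F_t(Y,M)-M_t$. Since $J^{(k)}(Y)_t=y-F^{(k)}_t(Y,M)-M_t$, condition (i) applied with $M'=M$ and $Y_0=Y'_0=y$ gives $\|J^{(k)}(Y)-J^{(k)}(Y')\|_p=0$, so $J^{(k)}$ is constant and $J$ has a unique fixed point. For the nonexpansiveness of $G$, the key observation is that when $Y^V$ is the (just-constructed) fixed point, the iterates satisfy $Y^{(k,M^V)}=Y^V$ for every $k$, and therefore $F^{(k)}(Y^V,M^V)=F(Y^V,M^V)$. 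Combined with $|F^{(k)}_T(\cdot)|\leq \sup_t |F^{(k)}_t(\cdot)|$ and condition (i), this yields
\begin{equation*}
\|G(V)-G(V')\|_p \leq \|F^{(k)}(Y^V,M^V)-F^{(k)}(Y^{V'},M^{V'})\|_p \leq \tfrac{1}{2C_p}\bigl\|(Y^V_0-M^V)-(Y^{V'}_0-M^{V'})\bigr\|_p.
\end{equation*}
The identities $Y^V_0=E_0(V)$ and $M^V_t=E_0(V)-E_t(V)$ then collapse $Y^V_0-M^V_t$ to $E_t(V)$, so the bracketed process equals $E_\cdot(V-V')$, and Doob's $L^p$-maximal inequality gives $\|E_\cdot(V-V')\|_p\leq C_p\|V-V'\|_p$. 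Hence $\|G(V)-G(V')\|_p\leq \tfrac{1}{2}\|V-V'\|_p$, which in particular makes $G$ nonexpansive.

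Finally, condition (ii) directly gives $\|G(V)\|_p\leq \|\xi\|_p+\|F_T(Y^V,M^V)\|_p\leq R_1$ for each $V$ in the closed ball $H=\{V:\|V\|_p\leq R_1\}$, so $G(H)\subseteq H$, and Theorem \ref{theorem6.1} completes the proof. No step is particularly hard; the one delicate observation is the collapse $Y^V_0-M^V_\cdot=E_\cdot(V)$, which both justifies the stabilization $F^{(k)}(Y^V,M^V)=F(Y^V,M^V)$ and produces exactly the factor $C_p$ that cancels the $1/(2C_p)$ in condition (i) upon applying Doob's inequality.
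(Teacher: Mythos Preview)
Your proof is correct and follows essentially the same route as the paper's: verify condition $(S)$ from (i) with $M=M'$, use $F^{(k)}(Y^V,M^V)=F(Y^V,M^V)$ together with $Y^V_0-M^V_t=E_t(V)$ and Doob's inequality to show $G$ is nonexpansive, and then invoke (ii) and Theorem~\ref{theorem6.1} (the paper cites Corollary~\ref{corollary6.2}, which is the same thing). Your observation that $J^{(k)}$ is actually constant (contraction constant $0$) and that $G$ is in fact a $\tfrac{1}{2}$-contraction is sharper than what the paper records, but the argument is otherwise identical.
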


\begin{proof}
In $(i)$, taking $M=M'$ yields that $||F^{(k)}(Y,M)-F(Y',M)||_p\leq \frac{1}{2C_p}||Y_0-Y_0'||_p\leq \frac{1}{2C_p}||Y-Y'||_p ($please bear in mind that $||Y-Y'||_p=||(\sup_{0\leq t\leq T}|Y_t-Y_t'|)||_p$ for all $Y,Y'\in S^p)$. Since $C_p=\frac{p}{p-1}, \frac{1}{2C_p}<1$, one has that $F$ satisfies condition $(S)$ by lemma2.5 of \cite{CN17}. Furthermore, $F(Y^V,M^V)=F^{(k)}(Y^V,M^V)$ for all $V\in L^p(\mathcal{F}_T)^d$, and $||F(Y^V,M^V)-F(Y^{V'},M^{V'})||_p\leq \frac{1}{2C_p}||E_0(V-V')-M^{V-V'}||_p\leq ||V-V'||_p$ by Doob's inequality, for all $V,V'\in L^p(\mathcal{F}_T)^d$, and hence also $||G(V)-G(V')||_p\leq ||F(Y^V,M^V)-F(Y^{V'}, M^{V'})||_p\leq ||V-V'||_p$, namely the corresponding $BSE(\ref{1.1})$ is of nonexpansive type. This and $(ii)$ imply that the corresponding $BSE(\ref{1.1})$ has a solution by Corollary\ref{corollary6.2}.
\end{proof}

\begin{corollary}\label{corollary6.5}
The $BSDE$
\begin{equation}\label{6.1}
 Y_t=\xi+\int^T_t f(s,Y,M)ds+M_T-M_t
\end{equation}
has a solution for a terminal condition $\xi \in L^p(\mathcal{F}_T)^d$ if the driver $f: [0,T]\times \Omega\times S^p\times M^p_0\rightarrow R^d$ satisfies the following three conditions:
\begin{enumerate}[(i)]
\item For all $(Y,M)\in S^p\times M^p_0, f(\cdot,Y,M)$ is progressively measurable with $||\int^T_0f(t,0,0)dt||_p<+\infty$.
\item $||f(t,Y,M)-f(t,Y',M')||_p\leq \frac{1}{2TC_p}||(Y_0-M)-(Y'_0-M')||_p$ for all $t\in [0,T]$ and all $(Y,M), (Y',M')\in S^p\times M^p_0$.
\item There exists $R_1>0$ such that $||\xi||_p+\sup\{||\int^T_0f(t,Y^V,M^V)dt||_p: V\in L^p(\mathcal{F}_T)^d$ and $||V||_p\leq R_1\}\leq R_1$.
\end{enumerate}
\end{corollary}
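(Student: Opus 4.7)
The plan is to deduce Corollary 6.5 directly from Proposition 6.4 by choosing the generator of integral form
\[
F_t(Y,M) := \int_0^t f(s,Y,M)\,ds, \qquad (Y,M) \in S^p \times M^p_0, \ t \in [0,T],
\]
and checking that this $F$ inherits the two hypotheses required by Proposition 6.4 (with iteration exponent $k=1$) from the three hypotheses imposed on $f$.

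First I would verify that $F$ is a well-defined mapping from $S^p \times M^p_0$ into $S^p_0$. Since $F_0(Y,M)=0$ automatically, and $t \mapsto F_t(Y,M)$ is continuous, it suffices to bound $\|F(Y,M)\|_p = \|\sup_{0\le t\le T}|F_t(Y,M)|\|_p$. By the triangle inequality and Minkowski's integral inequality,
\[
\|F(Y,M)\|_p \le \Bigl\|\int_0^T|f(s,Y,M)|\,ds\Bigr\|_p \le \int_0^T\|f(s,Y,M)-f(s,0,0)\|_p\,ds + \Bigl\|\int_0^T f(s,0,0)\,ds\Bigr\|_p,
\]
and both terms are finite by hypotheses (i) and (ii) of Corollary 6.5.

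Next I would check condition (i) of Proposition 6.4 with $k=1$. For any $(Y,M),(Y',M') \in S^p\times M^p_0$,
\[
\sup_{0\le t\le T}\bigl|F_t(Y,M)-F_t(Y',M')\bigr| \le \int_0^T |f(s,Y,M)-f(s,Y',M')|\,ds,
\]
so Minkowski's inequality followed by hypothesis (ii) of Corollary 6.5 gives
\[
\|F(Y,M)-F(Y',M')\|_p \le \int_0^T \|f(s,Y,M)-f(s,Y',M')\|_p\,ds \le \int_0^T \frac{1}{2TC_p}\|(Y_0-M)-(Y'_0-M')\|_p\,ds,
\]
which equals $\frac{1}{2C_p}\|(Y_0-M)-(Y'_0-M')\|_p$. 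Condition (ii) of Proposition 6.4 is immediate: by definition $F_T(Y^V,M^V) = \int_0^T f(t,Y^V,M^V)\,dt$, so hypothesis (iii) of Corollary 6.5 translates verbatim into the boundedness condition for $F_T$ required in Proposition 6.4 with the same $R_1$.

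There is no real obstacle here; the step requiring slight care is the Minkowski-type exchange of norm and time integral, but it is standard and the $1/(2TC_p)$ factor in (ii) is engineered precisely so that after integrating over $[0,T]$ one recovers $1/(2C_p)$. Having verified both hypotheses of Proposition 6.4, that proposition yields a solution $(Y,M) \in S^p \times M^p_0$ of the $BSE$ $Y_t + F_t(Y,M) + M_t = \xi + F_T(Y,M) + M_T$, which, upon substituting $F_t(Y,M) = \int_0^t f(s,Y,M)\,ds$, is precisely the $BSDE$ (6.1).
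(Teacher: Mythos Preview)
Your proposal is correct and follows essentially the same route as the paper: define $F_t(Y,M)=\int_0^t f(s,Y,M)\,ds$, verify it lands in $S^p_0$ via the triangle inequality and the Minkowski-type exchange of norm and time integral, check that hypothesis (ii) integrates to the Lipschitz bound $\frac{1}{2C_p}$ required by Proposition~\ref{proposition6.4} with $k=1$, and observe that hypothesis (iii) is verbatim condition (ii) of Proposition~\ref{proposition6.4}. One cosmetic point: in your well-definedness bound the second term should carry an absolute value, i.e.\ $\bigl\|\int_0^T |f(s,0,0)|\,ds\bigr\|_p$, since that is what the triangle inequality actually produces; the paper's own proof writes it this way (and Remark~\ref{remark7.4} confirms this is the intended reading of hypothesis~(i)).
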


\begin{proof}
First, $F: S^p\times M^p_0\rightarrow S^p_0$, given by $F_t(Y,M)=\int^t_0f(s,Y,M)ds$, is well--defined since $||F(Y,M)||_p\leq ||\int^T_0|f(t,Y,M)|dt||_p\leq ||\int^T_0(|f(t,Y,M)-f(t,0,0)|+|f(t,0,0)|)dt||_p\leq ||\int^T_0|f(t,Y,M)-f(t,0,0)|dt||_p+||\int^T_0|f(t,0,0)|dt \\
||_p \leq \frac{1}{2C_p}||Y_0-M||_p+ ||\int^T_0|f(t,0,0)|dt||_p<+\infty$ for all  $(Y,M)\in S^p\times M^p_0($by $(i)$ and $(ii))$. Second, $||F(Y,M)-F(Y',M')||_p\leq ||\int^T_0|f(t,Y,M)-f(t,Y',M')|dt||_p \\
\leq \int^T_0||f(t,Y,M)-f(t,Y',M')||_pdt\leq \frac{1}{2C_p}||(Y_0-M)-(Y'_0-M')||_p$.
\par
Finally, since $\|F_T(Y^V,M^V)||_p~=~\|\int^T_0f(t,Y^V,M^V)dt\|_p$, then $\|\xi\|_p~+~\sup\{ \\
\|F_T(Y^V,M^V)\|_p: V\in L^p(\mathcal{F}_T)^d$ and $\|V\|_p\leq R_1\}\leq \|\xi\|_p~+~\sup\{\|\int^T_0f(t,Y^V, \\ M^V)dt\|_p: V\in L^p(\mathcal{F}_T)^d$ and $\|V\|_p\leq R_1\}\leq R_1$. Thus the generator $F_t(Y,M)= \int^t_0f(s,Y,M)ds$ satisfies all the conditions of Proposition\ref{proposition6.4}.
\end{proof}

\par
Let $W$ and $N$ be the same Brown motion and Poisson random measure as in Section \ref{section5} respectively, we consider the associated $BSDE$ as follows:
\begin{proposition}\label{proposition6.6}
The $BSDE$
\begin{equation}\label{6.2}
Y_t=\xi+ \int^T_tf(s,Z^M,U^M)ds+M_T-M_t
\end{equation}
has a solution $(Y,M)\in S^2\times M^2_0$ for a terminal condition $\xi \in L^2(\mathcal{F}_T)^d$ if the driver $f: [0,T]\times \Omega\times H^2\times L^2(\widetilde{N})\rightarrow R^d$ satisfies the following three conditions:
\begin{enumerate}[(i)]
\item For all $(Z,U)\in H^2\times L^2(\widetilde{N}), f(\cdot,Z,U)$ is progressively measurable with $\|\int^T_0|f(t,0,0)|dt\|_2<+\infty$.
\item $\|\int^T_0|f(t,Z,U)-f(t,Z',U')|dt\|_2\leq \sqrt{\|Z-Z'\|^2_2~+~\|U-U'\|^2_2}$ for all \\ $(Z,U),(Z',U')\in H^2\times L^2(\widetilde{N})$.
\item There exists some positive number $R_1$ such that $\|\xi\|_2+\sup\{\|\int^T_0 f(t,Z^{M^V}, \\
    U^{M^V})dt\|_2: V\in L^2(\mathcal{F}_T)^d$ and $\|V\|_2\leq R_1\}\leq R_1$.
\end{enumerate}
\end{proposition}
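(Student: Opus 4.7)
The plan is to follow exactly the template of Proposition 6.4 and Corollary 6.5: verify that the generator $F_t(M):=\int_0^t f(s,Z^M,U^M)\,ds$ is a well-defined map from $M_0^2$ to $S_0^2$, check that the induced $G$ is nonexpansive and maps a closed ball of radius $R_1$ into itself, and then apply Theorem \ref{theorem6.1}. Since $F$ does not depend on $Y$, condition $(S)$ is automatic (as observed before Corollary \ref{corollary4.2}), so only the two nonexpansive-type ingredients need attention.

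First I would check well-definedness. For $M\in M_0^2$, the classical decomposition $(5.2)$ gives $(Z^M,U^M,K^M)\in H^2\times L^2(\widetilde N)\times M_0^2$, with the isometry $\|Z^M\|_2^2+\|U^M\|_2^2\le \|M_T\|_2^2$. Writing $\sup_{t\in[0,T]}|F_t(M)|\le \int_0^T|f(s,Z^M,U^M)|\,ds$ and inserting $\pm f(s,0,0)$, condition $(ii)$ yields
\begin{equation*}
\|F(M)\|_2\;\le\;\sqrt{\|Z^M\|_2^2+\|U^M\|_2^2}+\Big\|\int_0^T|f(t,0,0)|\,dt\Big\|_2\;<\;+\infty,
\end{equation*}
and the standard argument shows $F(M)\in S_0^2$.

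Next I would verify that the associated map $G(V)=\xi+F_T(M^V)$ on $L^2(\mathcal F_T)^d$ is nonexpansive. For $V,V'\in L^2(\mathcal F_T)^d$, the linearity of $V\mapsto M^V$ and the uniqueness part of the decomposition give $Z^{M^V}-Z^{M^{V'}}=Z^{M^{V-V'}}$ and $U^{M^V}-U^{M^{V'}}=U^{M^{V-V'}}$. Applying condition $(ii)$ to these differences and then the isometry $(5.5)$ to $M^{V-V'}$ yields
\begin{equation*}
\|G(V)-G(V')\|_2\;\le\;\sqrt{\|Z^{M^{V-V'}}\|_2^2+\|U^{M^{V-V'}}\|_2^2}\;\le\;\|M^{V-V'}_T\|_2\;=\;\|(V-V')-E_0(V-V')\|_2\;\le\;\|V-V'\|_2,
\end{equation*}
so the induced $BSE$ (\ref{1.1}) is of nonexpansive type. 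Finally condition $(iii)$ gives, for $H:=\{V\in L^2(\mathcal F_T)^d:\|V\|_2\le R_1\}$, that $\|G(V)\|_2\le \|\xi\|_2+\|\int_0^T f(t,Z^{M^V},U^{M^V})\,dt\|_2\le R_1$, i.e.\ $G(H)\subset H$; Theorem \ref{theorem6.1} applied to the closed ball $H$ in the uniformly convex space $L^2(\mathcal F_T)^d$ produces a fixed point of $G$, which via Theorem 2.3 of \cite{CN17} corresponds to a solution $(Y,M)\in S^2\times M_0^2$ of the $BSDE$ (\ref{6.2}).

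The one nontrivial technical point is the linearity identity for the triples in the martingale representation, i.e.\ $Z^{M^V-M^{V'}}=Z^{M^V}-Z^{M^{V'}}$ and similarly for $U$ and $K$; but this is immediate from uniqueness of the decomposition $(5.2)$ applied to $M^V-M^{V'}\in M_0^2$. Everything else is a bookkeeping exercise in the $L^2$-norm combined with the two estimates already used in Corollary \ref{corollary6.5}, so the proof is essentially parallel to the proof of that corollary, with Doob's inequality replaced by the orthogonal decomposition $(5.2)$.
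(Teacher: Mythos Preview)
Your proof is correct and follows essentially the same route as the paper: verify that $F_t(M)=\int_0^t f(s,Z^M,U^M)\,ds$ is well defined via the isometry, note that condition $(S)$ holds automatically since $F$ is independent of $Y$, show $G$ is nonexpansive via $\|G(V)-G(V')\|_2\le \sqrt{\|Z^{M^{V-V'}}\|_2^2+\|U^{M^{V-V'}}\|_2^2}\le \|M^{V-V'}_T\|_2\le \|V-V'\|_2$, and then invoke condition $(iii)$ together with Theorem~\ref{theorem6.1} (the paper phrases the last step via Corollary~\ref{corollary6.2}, which is the same thing).
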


\begin{proof}
For any $M\in M^2_0$, the decomposition $M_t=\int^t_0 Z^M_s\cdot dW_s+ \int^t_0\int_E U^M_s(x) \\
\widetilde{N}(ds,dx)~+~K^M_t$ satisfies the isometry: $E(|M_t|^2)+\int^t_0E|Z^M_s|^2ds+ \int^t_0\int_E E[|U^M_s \\
(x)|^2]\mu(dx)ds+ E[|K^M_t|^2]$, so\\
 $\|\int^T_0|f(t,Z^M,U^M)|dt\|_2 \\
 \leq \|\int^T_0|f(t,0,0)|dt\|_2+ \|\int^T_0|f(t,Z^M,U^M)-f(t,0,0)|dt\|_2 \\
 \leq \|\int^T_0|f(t,0,0)|dt\|_2+ \sqrt{\int^T_0 E[|Z^M_s|^2]ds+ \int^T_0\int_E E[|U^M_s(x)|^2]\mu(dx)ds}\\
 \leq \|\int^T_0|f(t,0,0)|dt\|_2+ \sqrt{E[|M_T|^2]}<+\infty$, \\
 which shows that the generator $F:M^2_0\rightarrow S^2_0$, defined by $F_t(M)=\int^t_0f(s,Z^M, \\
 U^M)dt$, is well defined. Since $F$ does not depend on $Y, F$ satisfies condition $(S)$.
\par
Further, $\|G(V)-G(V')\|_2\\
=\|\int^T_0[f(t,Z^{M^V},U^{M^V})-f(t,Z^{M^{V'}},U^{M^V})]dt\|_2\\
 \leq \sqrt{\|Z^{M^V}-Z^{M^{V'}}\|^2_2+\|U^{M^V}-U^{M^{V'}}\|_2^2}\\
 = \sqrt{\|M^V_T-M^{V'}_T\|^2_2}\\
 = \sqrt{\|E_0(V-V')-(V-V')\|^2_2}\\ \leq \|V-V'\|_2$
 for all $V,V'\in L^2(\mathcal{F}_T)^d$, \\
 so $BSDE(6.2)$ is of nonexpansive type, which means that $BSDE(6.2)$ has a solution $(Y,M)\in S^2\times M^2_0$ by Corollary\ref{corollary6.2}.
\end{proof}

\par
In the following, we will give the conditional versions of Theorem\ref{theorem6.1}, Corollary\ref{corollary6.2}, Proposition\ref{proposition6.4}, Corollary\ref{corollary6.5} and Proposition\ref{proposition6.6}, respectively. \vspace{2mm} \\
\textbf{Theorem6.1$'$}\label{theorem6.1'}
\emph{$BSE(\ref{1.3})$ has a solution $(Y,M)\in S^p_{\mathcal{F}_0}\times M^p_{\mathcal{F}_0,0}$ if it is of nonexpansive type and there exists an a.s. bounded $\mathcal{T}_{\varepsilon,\lambda}$--closed $L^0$--convex subset $H$ of $L^p_{\mathcal{F}_0}(\mathcal{F}_T,R^d)$ such that the associated operator $G: L^p_{\mathcal{F}_0}(\mathcal{F}_T,R^d)\rightarrow L^p_{\mathcal{F}_0}(\mathcal{F}_T,R^d)$, given by $G(V)=\xi +F_T(Y^V,M^V)$ for any $V\in L^p_{\mathcal{F}_0}(\mathcal{F}_T,R^d)$, maps $H$ into $H$.}

\begin{proof}
Since $1<p<+\infty, L^p_{\mathcal{F}_0}(\mathcal{F}_T,R^d)$ is a random uniformly convex $\mathcal{T}_{\varepsilon,\lambda}$--complete $RN$ module. It follows from Corollary\ref{corollary3.9} that $G$ has a fixed point in $H$, which further implies that the $BSE(\ref{1.3})$ has a solution $(Y,M)\in S^p_{\mathcal{F}_0}\times M^p_{\mathcal{F}_0,0}$ by Theorem\ref{theorem4.1}.
\end{proof}
\par
The following four results are merely stated without the proofs since the idea of the proofs of them is very similar to that of the proofs of Corollary\ref{corollary6.2}, Proposition\ref{proposition6.4}, Corollary\ref{corollary6.5} and Proposition\ref{proposition6.6}, respectively, which one can easily see only by replacing the $p$--norms with the conditional $p$--norms.  \vspace{2mm} \\
\textbf{Corollary6.2$'$}\label{corollary6.2'}
\emph{$BSE(\ref{1.3})$ has a solution $(Y,M)\in S^p_{\mathcal{F}_0}\times M^p_{\mathcal{F}_0,0}$ if it is of nonexpansive type and there exists $R_1\in L^0_+(\mathcal{F}_0)$ such that $|||\xi|||_p+ \bigvee \{|||F_T(Y^V,M^V)|||_p: V\in L^p_{\mathcal{F}_0}(\mathcal{F}_T,R^d)$ and $|||V|||_p\leq R_1\}\leq R_1$.} \vspace{1mm} \\
\textbf{Proposition6.3$'$}\label{proposition6.3'}
\emph{If the generator $F: S^p_{\mathcal{F}_0}\times M^p_{\mathcal{F}_0,0}\rightarrow S^p_{\mathcal{F}_0,0}$ satisfies the following two conditions:
\begin{enumerate}[(i)]
\item There exists some positive integer $k\in N$ such that $|||F^{(k)}(Y,M)-F^{(k)}(Y', \\
M')|||_p\leq \frac{1}{2C_p}|||(Y_0-M)-(Y_0'-M')|||_p$ for all $(Y,M), (Y',M')\in S^p_{\mathcal{F}_0}\times M^p_{\mathcal{F}_0,0}$; or $F$ is stable and there exists some $\mathcal{F}_0$--measurable positive integer--valued random variable $L:\Omega \rightarrow N$ such that $|||F^{(L)}(Y,M)-F^L(Y',M')|||_p \\
\leq \frac{1}{2C_p}|||(Y_0-M)-(Y_0'-M')|||_p$ for all $(Y,M), (Y',M')\in S^p_{\mathcal{F}_0}\times M^p_{\mathcal{F}_0,0}$.
\item There exists $R_1\in L^0_+(\mathcal{F}_0)$ such that $|||\xi|||_p+\bigvee\{|||F_T(Y^V,M^V)|||_p: V\in L^p_{\mathcal{F}_0}(\mathcal{F}_T,R^d)$ and $|||V|||_p\leq R_1\}\leq R_1$.
\end{enumerate}
Then $BSE(\ref{1.3})$ has a solution $(Y,M)\in S^p_{\mathcal{F}_0}\times M^p_{\mathcal{F}_0,0}$.}\vspace{2mm}\\
\textbf{Corollary $6.4'$} \label{corollary6.4'}
\emph{The $BSDE$
\begin{equation}\label{6.1'}
 Y_t=\xi +\int^T_tf(s,Y,M)ds+M_T-M_t \tag{6.1$^{\prime}$}
\end{equation}
has a solution $(Y,M)\in S^p_{\mathcal{F}_0}\times M^p_{\mathcal{F}_0,0}$ for a terminal condition $\xi \in L^p_{\mathcal{F}_0}(\mathcal{F}_T,R^d)$ if the driver $f: [0,T]\times \Omega\times S^p_{\mathcal{F}_0}\times M^p_{\mathcal{F}_0,0}\rightarrow R^d$ satisfies the following three conditions:
\begin{enumerate}[(i)]
\item For all $(Y,M)\in S^p_{\mathcal{F}_0}\times M^p_{\mathcal{F}_0,0}, f(\cdot,Y,M)$ is progressively measurable with $|||\int^T_0|f(t,0,0)|dt|||_p<+\infty(P$--a.s.$)$.
\item $|||f(t,Y,M)-f(t,Y',M')|||_p\leq \frac{1}{2TC_p}|||(Y_0-M)-(Y'_0-M')|||_p$ for all $t\in [0,T]$ and all $(Y,M), (Y',M') \in S^p_{\mathcal{F}_0}\times M^p_{\mathcal{F}_0,0}$.
\item There exists $R_1\in L^0_+(\mathcal{F}_0)$ such that $|||\xi|||_p+\bigvee \{|||\int^T_0f(s,Y^V,M^V)ds|||_p: V\in L^p_{\mathcal{F}_0}(\mathcal{F}_T,R^d)$ and $|||V|||_p\leq R_1\}\leq R_1$.
\end{enumerate}}
 \noindent \textbf{Proposition6.5$'$}\label{proposition6.6'}
\emph{The $BSDE$
\begin{equation}\label{6.2'}
  Y_t=\xi +\int^T_tf(s,Z^M,U^M)ds+M_T-M_t \tag{6.2$^{\prime}$}
\end{equation}
has a solution $(Y,M)\in S^2_{\mathcal{F}_0}\times M^2_{\mathcal{F}_0,0}$ for a terminal condition $\xi \in L^2_{\mathcal{F}_0}(\mathcal{F}_T,R^d)$ if the driver $f: [0,T]\times \Omega\times H^2_{\mathcal{F}_0}\times L^2_{\mathcal{F}_0}(\widetilde{N})\rightarrow R^d$ satisfies the following three conditions:
\begin{enumerate}[(i)]
\item For all $(Z,U)\in H^2_{\mathcal{F}_0}\times L^2_{\mathcal{F}_0}(\widetilde{N}), f(\cdot,Z,U)$ is progressively measurable with $|||\int^T_0|f(t,0,0)|dt|||_2<+\infty(P$--a.s.$)$.
\item $|||\int^T_0|f(t,Z,U)-f(t,Z',U')|dt|||_2\leq \sqrt{|||Z-Z'|||^2_2+|||U-U'|||^2_2}$ for all $(Z,U), (Z',U')\in H^2_{\mathcal{F}_0}\times L^2_{\mathcal{F}_0}(\widetilde{N})$.
\item There exists $R_1\in L^0_+(\mathcal{F}_0)$ such that $|||\xi|||_2+\bigvee \{|||\int^T_0f(t,Z^{M^V},U^{M^V})dt|||_2 \\
    : V\in L^2_{\mathcal{F}_0}(\mathcal{F}_T,R^d)$ and $|||V|||_2\leq R_1\}\leq R_1$.
\end{enumerate}}
\section{The relation between the solutions of $BSEs$ of the forms (\ref{1.1}) and (\ref{1.3})}\label{section7}
Let $(E,\|\cdot\|)$ be an $RN$ module with base $(\Omega,\mathcal{F},P)$ such that $E$ is stable. For any subset $G$ of $E$, $H_{cc}(G) := \{ \sum^\infty_{n=1} \tilde{I}_{A_n} \cdot g_n : \{ A_n, n \in N \}$ is a countable partition of $\Omega$ to $\mathcal{F}$ and $\{ g_n, n \in N \}$ is a sequence in $G \}$, is called the countable concatenation hull of $G$.
\par
In \cite{GZZ14}, it is already proved that $L^p_{\mathcal{F}_0}(\mathcal{F}_T, R^d) = H_{cc}(L^p(\mathcal{F}_T, R^d))$. In fact, $L^p_{\mathcal{F}_0}(\mathcal{F}_T, R^d)$ is a complete $RN$ module with base $(\Omega,\mathcal{F}_0,P)$, for any element $g \in L^p_{\mathcal{F}_0}(\mathcal{F}_T, R^d)$ there exist some $\xi \in L^0(\mathcal{F}_0)$ and $x \in L^p(\mathcal{F}_T, R^d)$ such that $g = \xi \cdot x$. Let $A_n = (n-1 \leq |\xi| < n)$ for each $n \in N$, then each $A_n \in \mathcal{F}_0$ and $\sum^\infty_{n=1} A_n = \Omega$, since $\xi = \sum^\infty_{n=1} \tilde{I}_{A_n} \cdot \xi$, again let $x_n = \tilde{I}_{A_n} \cdot \xi \cdot x$, then $g = \sum^\infty_{n=1} \tilde{I}_{A_n} \cdot x_n \in H_{cc}(L^p(\mathcal{F}_T, R^d))$ since $x_n \in L^p(\mathcal{F}_T, R^d)$ for each $n \in N$. Namely, $L^p_{\mathcal{F}_0}(\mathcal{F}_T, R^d) \subset H_{cc}(L^p(\mathcal{F}_T, R^d))$, the reverse inclusion is obvious since $L^p(\mathcal{F}_T, R^d) \subset L^p_{\mathcal{F}_0}(\mathcal{F}_T, R^d)$ and $L^p_{\mathcal{F}_0}(\mathcal{F}_T, R^d)$ is stable. Similarly, one has $S^p_{\mathcal{F}_0} = H_{cc}(S^p)$, $M^p_{\mathcal{F}_0} = H_{cc}(M^p)$ and $H^2_{\mathcal{F}_0} = H_{cc}(H^2)$.
\par
For the sake of convenience, let us first introduce the following:
\begin{definition} \label{definition7.1}
$BSE$(\ref{1.3}) is said to be stable if its generator $F : S^p_{\mathcal{F}_0} \times M^p_{\mathcal{F}_0,0} \to S^p_{\mathcal{F}_0,0}$ is stable. $BSE$(\ref{1.3}) is said to be regular if its generator $F$ maps $S^p \times M^p_0$ into $S^p_0$, namely $F(S^p \times M^p_0) \subset S^p_0$, in which case we denote by $\hat{F}$ the limitation of $F$ to $S^p \times M^p_0$.
\end{definition}

\begin{proposition}\label{proposition7.2}
Suppose that $BSE$(\ref{1.3}) is both stable and regular and that a sequence $\{ \xi_n, n \in N \}$ of $L^p(\mathcal{F}_T, R^d)$ is such that $BSE$(\ref{1.1}) has a solution $(Y^{(n)},M^{(n)}) \in S^p \times M^p_0$ for the generator $\hat{F}$ and the terminal condition $\xi_n$. Then, for any countable partition $\{ A_n, n \in N \}$ of $\Omega$ to $\mathcal{F}_0$, $(Y,M) := (\sum^\infty_{n=1} \tilde{I}_{A_n} \cdot Y^{(n)}, \sum^\infty_{n=1} \tilde{I}_{A_n} \cdot M^{(n)})$ is a solution of $BSE$(\ref{1.3}) for the terminal condition $\xi := \sum^\infty_{n=1} \tilde{I}_{A_n} \cdot \xi_n$.
\end{proposition}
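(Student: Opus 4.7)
The plan is to exploit three ingredients: the stability (countable concatenation property) of the ambient $RN$ modules $S^p_{\mathcal{F}_0}$, $M^p_{\mathcal{F}_0,0}$ and $L^p_{\mathcal{F}_0}(\mathcal{F}_T,R^d)$ recorded in Remark \ref{remark2.8}, the assumed stability of the generator $F$, and the fact that each $(Y^{(n)},M^{(n)})$ solves $BSE$(\ref{1.1}) with generator $\hat{F}$ and terminal condition $\xi_n$. First I would check that the concatenated candidate solution and terminal condition lie in the correct spaces: since each $Y^{(n)}\in S^p\subset S^p_{\mathcal{F}_0}$, each $M^{(n)}\in M^p_0\subset M^p_{\mathcal{F}_0,0}$, each $\xi_n\in L^p(\mathcal{F}_T,R^d)\subset L^p_{\mathcal{F}_0}(\mathcal{F}_T,R^d)$, and all three ambient spaces are stable, Definition \ref{definition2.7} and part (1) of Remark \ref{remark2.8} guarantee that the elements $Y=\sum_{n\geq 1}\tilde{I}_{A_n}Y^{(n)}$, $M=\sum_{n\geq 1}\tilde{I}_{A_n}M^{(n)}$ and $\xi=\sum_{n\geq 1}\tilde{I}_{A_n}\xi_n$ exist and are uniquely determined by the property $\tilde{I}_{A_n}\cdot(\text{sum})=\tilde{I}_{A_n}\cdot(\text{n-th term})$ for every $n$. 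In particular, because this defining property applied to $Y$ (resp.\ $M$) forces the equivalence class of the associated process to agree with that of the natural pointwise concatenation, one has the timewise identities $Y_t=\sum_{n\geq 1}\tilde{I}_{A_n}Y^{(n)}_t$ and $M_t=\sum_{n\geq 1}\tilde{I}_{A_n}M^{(n)}_t$ for every $t\in[0,T]$.

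Second, I would invoke the stability of $F$ on $S^p_{\mathcal{F}_0}\times M^p_{\mathcal{F}_0,0}$: applied to the pair $(Y,M)=\sum_{n\geq 1}\tilde{I}_{A_n}(Y^{(n)},M^{(n)})$ and evaluated at $t$, it gives
\[
F_t(Y,M)=\sum_{n\geq 1}\tilde{I}_{A_n}F_t(Y^{(n)},M^{(n)})=\sum_{n\geq 1}\tilde{I}_{A_n}\hat{F}_t(Y^{(n)},M^{(n)}),
\]
where the last equality is regularity: since $(Y^{(n)},M^{(n)})\in S^p\times M^p_0$, the values of $F$ and $\hat{F}$ coincide on such pairs. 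The same identity holds at the terminal time $T$.

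Third, I would substitute. Multiplying the $n$-th instance of $BSE$(\ref{1.1}),
\[
Y^{(n)}_t+\hat{F}_t(Y^{(n)},M^{(n)})+M^{(n)}_t=\xi_n+\hat{F}_T(Y^{(n)},M^{(n)})+M^{(n)}_T,
\]
by $\tilde{I}_{A_n}$ and summing over $n$ (using the pointwise decompositions of $Y_t$, $M_t$, $F_t(Y,M)$ established above, and the analogous decomposition of $\xi$ and of $F_T(Y,M)+M_T$) yields
\[
Y_t+F_t(Y,M)+M_t=\xi+F_T(Y,M)+M_T,\qquad \forall t\in[0,T],
\]
which is exactly $BSE$(\ref{1.3}) for the terminal condition $\xi$. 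Hence $(Y,M)\in S^p_{\mathcal{F}_0}\times M^p_{\mathcal{F}_0,0}$ is a solution.

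The argument is essentially bookkeeping and the only real subtlety, which I would treat as the main obstacle, is ensuring that the abstract countable-concatenation in $S^p_{\mathcal{F}_0}$ is compatible with pointwise-in-$t$ evaluation and with the action of $F$. Both points are settled by the uniqueness statement in part (1) of Remark \ref{remark2.8} together with the hypothesis that $F$ is stable; no analytic estimate is required, only the algebraic compatibility of the module operations, scalar multiplication by $\tilde{I}_{A_n}$, and the stable map $F$ with countable concatenation.
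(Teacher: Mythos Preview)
Your proposal is correct and follows essentially the same approach as the paper's own proof: start from each $(Y^{(n)},M^{(n)})$ solving $BSE$(\ref{1.1}) with $\hat{F}$, use regularity to replace $\hat{F}$ by $F$, multiply by $\tilde{I}_{A_n}$, sum over $n$, and invoke the stability of $F$ to identify the concatenated $F$-terms with $F_t(Y,M)$ and $F_T(Y,M)$. Your version is actually more careful than the paper's in explicitly justifying that $Y$, $M$, $\xi$ lie in the right stable $RN$ modules and that abstract concatenation agrees with timewise evaluation; the paper simply writes the concatenated equality and appeals to stability of $F$ in one line.
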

\begin{proof}
Since $Y^{(n)}_t + \hat{F}_t(Y^{(n)},M^{(n)}) + M^{(n)}_t = \xi_n + \hat{F}_T(Y^{(n)},M^{(n)}) + M^{(n)}_T$ for each $t \in [0, T]$ and each $n \in N$, then $Y^{(n)}_t + F_t(Y^{(n)},M^{(n)}) + M^{(n)}_t = \xi_n + F_T(Y^{(n)},M^{(n)}) + M^{(n)}_T$ since $\hat{F}$ is the limitation of $F$ to $S^p \times M^p_0$. Further, $(\sum^\infty_{n=1} \tilde{I}_{A_n} \cdot Y^{(n)})_t + \sum^\infty_{n=1} \tilde{I}_{A_n} \cdot F_t(Y^{(n)},M^{(n)}) + (\sum^\infty_{n=1} \tilde{I}_{A_n} \cdot M^{(n)})_t =  \sum^\infty_{n=1} \tilde{I}_{A_n} \cdot \xi_n + \sum^\infty_{n=1} \tilde{I}_{A_n} \cdot F_T(Y^{(n)},M^{(n)}) + (\sum^\infty_{n=1} \tilde{I}_{A_n} \cdot M^{(n)})_T$, then $Y_t + F_t(Y,M) + M_t = \xi + F_T(Y,M) + M_T$ by stability of $F$.
\end{proof}
\begin{proposition}\label{proposition7.3}
Suppose that $BSE$(\ref{1.3}) is both stable and regular and further suppose that $BSE$(\ref{1.3}) has a unique solution in $S^p_{\mathcal{F}_0} \times M^p_{\mathcal{F}_0,0}$ for each terminal condition $\xi \in L^p_{\mathcal{F}_0}(\mathcal{F}_T, R^d)$ and $BSE$(\ref{1.1}) also has a unique solution in $S^p \times M^p_0$ for the generator $\hat{F}$ and each terminal condition $\xi \in L^p(\mathcal{F}_T, R^d)$. Then, for each terminal condition $\xi$ in $L^p_{\mathcal{F}_0}(\mathcal{F}_T, R^d)$ (for example, let us write $\xi = \sum^\infty_{n=1} \tilde{I}_{A_n} \cdot \xi_n$ for some countable partition $\{ A_n, n \in N \}$ of $\Omega$ to $\mathcal{F}_0$ and some sequence $\{ \xi_n, n \in N \}$ in $L^p(\mathcal{F}_T, R^d)$, and further denote by $(Y^{(n)},M^{(n)}) \in S^p \times M^p_0$ the unique solution of $BSE$(\ref{1.1}) for $\hat{F}$ and $\xi_n$),$(Y,M) := (\sum^\infty_{n=1} \tilde{I}_{A_n} \cdot Y^{(n)}, \sum^\infty_{n=1} \tilde{I}_{A_n} \cdot M^{(n)})$ is the unique solution of $BSE$(\ref{1.3}) for the terminal condition $\xi$.
\end{proposition}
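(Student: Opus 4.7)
The plan is to combine Proposition 7.2 with the two uniqueness hypotheses, so the argument is essentially a one-line corollary of material already developed. First, I would recall from the opening of Section 7 that $L^p_{\mathcal{F}_0}(\mathcal{F}_T, R^d) = H_{cc}(L^p(\mathcal{F}_T, R^d))$, so any $\xi \in L^p_{\mathcal{F}_0}(\mathcal{F}_T, R^d)$ does admit a representation $\xi = \sum_{n \geq 1} \tilde{I}_{A_n}\xi_n$ with $\{A_n, n \in N\}$ a countable partition of $\Omega$ in $\mathcal{F}_0$ and $\xi_n \in L^p(\mathcal{F}_T, R^d)$. By regularity of BSE(\ref{1.3}) and the second uniqueness hypothesis, each $\xi_n$ determines a unique $(Y^{(n)}, M^{(n)}) \in S^p \times M^p_0$ solving BSE(\ref{1.1}) for the generator $\hat{F}$.

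Second, the candidate $(Y, M) := (\sum_{n \geq 1}\tilde{I}_{A_n} Y^{(n)}, \sum_{n \geq 1}\tilde{I}_{A_n} M^{(n)})$ is a well-defined element of $S^p_{\mathcal{F}_0} \times M^p_{\mathcal{F}_0,0}$ because $S^p_{\mathcal{F}_0} = H_{cc}(S^p)$ and $M^p_{\mathcal{F}_0,0} = H_{cc}(M^p_0)$; each ambient space is $\mathcal{T}_{\varepsilon,\lambda}$--complete and hence stable by Remark \ref{remark2.8}, and the condition $M^{(n)}_0 = 0$ for each $n$ guarantees that the concatenation stays in $M^p_{\mathcal{F}_0,0}$ rather than merely $M^p_{\mathcal{F}_0}$. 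Proposition \ref{proposition7.2} then applies verbatim — stability and regularity of $F$ are among the standing hypotheses — and yields that this $(Y, M)$ is a solution of BSE(\ref{1.3}) for the terminal condition $\xi$.

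Third, the first uniqueness hypothesis says BSE(\ref{1.3}) has exactly one solution in $S^p_{\mathcal{F}_0} \times M^p_{\mathcal{F}_0,0}$ for the given $\xi$, so the $(Y, M)$ just produced must coincide with that unique solution, which is exactly the claim.

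I do not anticipate a genuine obstacle: the real technical content — verifying that the generator $F$ commutes with countable concatenation on both sides of the BSE — was absorbed into Proposition \ref{proposition7.2}. The only subtlety worth flagging while writing is that the representation $\xi = \sum_{n \geq 1}\tilde{I}_{A_n}\xi_n$ is far from unique, so a priori different representations could be expected to produce different concatenations $(Y, M)$. This is harmless: each such concatenation is, by Proposition \ref{proposition7.2}, a solution of the \emph{same} BSE(\ref{1.3}), and the first uniqueness hypothesis forces all of them to collapse to a single pair. In effect, the proposition converts the existence statement of Proposition \ref{proposition7.2} into an explicit formula for the unique solution in terms of the classical solutions corresponding to the pieces $\xi_n$ of $\xi$.
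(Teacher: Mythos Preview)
Your proposal is correct and follows essentially the same approach as the paper: invoke the countable concatenation hull identities (in particular $M^p_{\mathcal{F}_0,0} = H_{cc}(M^p_0)$), apply Proposition~\ref{proposition7.2} to obtain that the concatenated pair is a solution, and then use the uniqueness hypothesis for BSE(\ref{1.3}) to conclude. The paper's own proof is in fact a one-line reference back to Proposition~\ref{proposition7.2}, so your write-up is more detailed than strictly necessary but entirely in line with the intended argument.
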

\begin{proof}
One only need to notice $M^p_{\mathcal{F}_0,0} = H_{cc}(M^p_0)$, the remaining part of Proof is similar to the proof of Proposition \ref{proposition7.2}, so is omitted.
\end{proof}
\par
Although Proposition \ref{proposition7.2} and \ref{proposition7.3} are simple, their meaning is obvious: namely the problem to find a solution for a stable and regular $BSE$(\ref{1.3}) can reduce, to some extent, to one to find a solution for $BSE$(\ref{1.1}). Before we illustrate that the classical $BSDEs$ as studied in \cite{PP90,Peng04} are just the case which Proposition \ref{proposition7.3} can be applied to , let us first give the following two remarks.
\begin{remark}\label{remark7.4}
Proposition 3.3 of \cite{CN17} requires the driver $f$ to satisfy $\int^T_0 \| f(t,0,0) \\
 \|_pdt < +\infty$, which is used to guarantee that the generator $F_t(Y,M) = \int^t_0 f(s,Y, \\
 M)ds$ is well defined. In fact, as in the proof of Corollary \ref{corollary6.5}, the weaker condition that $\| \int^T_0 |f(t,0,0)|dt\|_p < +\infty$ is enough to guarantee that $F$ is well defined. Similarly, the corresponding conditions of Proposition 3.7, Corollary 3.9 and Proposition 3.10 of \cite{CN17} can also be relaxed as above.
\end{remark}
\begin{remark}\label{remark7.5}
In the studies of $BSE$(\ref{1.1}) and $BSE$(\ref{1.3}), the initial time can be also chosen as an intermediate time $t_0 \in (0,T)$ rather than only the time 0. Correspondingly, we consider the filtration $\mathbb{F} = (\mathcal{F}_t)_{t \in [t_0,T]}$ and $S$ to be the set of equivalence classes of $RCLL$ $(\mathcal{F}_t)_{t \in [t_0,T]}$--adapted processes with time parameter set $[t_0,T]$, the spaces employed in the studies of $BSEs$ are changed to the following: \\
$S^p := \{ Y \in S : \|Y\|_p = \|\sup_{t_0 \leq t \leq T} |Y_t| \|_p < +\infty \}$\\
$S^p_{t_0} := \{ Y \in S^p : Y_{t_0} =0 \}$ \\
$M^p := \{ M \in S^p : M$ is a martingale \} \\
$M^p_{t_0} := \{ M \in M^p : M_{t_0} =0 \}$ \\
$H^2 := \{ Z : Z$ is an equivalence class of an $(\mathcal{F}_t)_{t \in [t_0,T]}$--predictable $R^{d\times n}$--valued process such that $\|Z\|_2 = E [\int^T_{t_0} |Z_t|^2 dt]^{1/2} < +\infty \}$.
\par
The conditional analogues to the spaces as above are the following: \\
$S^p_{\mathcal{F}_{t_0}} := \{ Y \in S : |||Y|||_p = E [(\sup_{t_0 \leq t \leq T} |Y_t|)^p~|~\mathcal{F}_{t_0}]^{1/p} < +\infty$(a.s.)\} for $1 \leq p < +\infty$.\\
$S^\infty_{\mathcal{F}_{t_0}} := \{ Y \in S : |||Y|||_\infty = \bigwedge\{ \xi \in \bar{L}^0_{++}(\mathcal{F}_{t_0}) : \sup_{t_0 \leq t \leq T} |Y_t| \leq \xi \} \in L^0_+(\mathcal{F}_{t_0}) \}$. \\
$S^p_{\mathcal{F}_{t_0},0} := \{ Y \in S^p_{\mathcal{F}_{t_0}} : Y_{t_0} = 0 \}$ for $1 \leq p \leq +\infty$. \\
$M^p_{\mathcal{F}_{t_0}} := \{ M \in S^p_{\mathcal{F}_{t_0}} : M$ is a generalized $(\mathcal{F}_t)_{t \in [t_0,T]}$--martingale\} for $1 \leq p\leq +\infty$. \\
$M^p_{\mathcal{F}_{t_0},0} := \{ M \in M^p_{\mathcal{F}_{t_0}} : M_{t_0} =0 \}$ for $1 \leq p \leq +\infty$. \\
$H^2_{\mathcal{F}_{t_0}} := \{ Z : Z$ is an equivalence class of an $(\mathcal{F}_t)_{t \in [t_0,T]}$--predictable $R^{d\times n}$--valued process such that $|||Z|||_2 = E [\int^T_{t_0} |Z_t|^2 dt~|~ \mathcal{F}_{t_0}]^{1/2} < +\infty $(a.s.)\}.
\end{remark}
\par
Now we can return to the case of the classical $BSDEs$, whose formulation is slightly more general than the $BSDEs$ as studied in \cite{PP90,Peng04}.
\begin{example}\label{example7.6}
We consider the following $BSDE$:
\begin{equation}\label{7.1}
  Y_t = \xi + \int^T_t g(s,Y_s,Z_s)ds -\int^T_t Z_sdW_s, \forall t \in [t_0, T]
\end{equation}
where $W$ is a $n$--dimensional Brown motion on a probability space $(\Omega,\mathcal{F},P)$ with the time parameter set $[0,T]$, for each $t_0 \leq t \leq T$, $\mathcal{F}_t = \sigma \{ \sigma (B_s : t_0 \leq s \leq t) \cup \mathcal{N} \}$, where $\mathcal{N}$ stands for the family of the sets $A$ of $\mathcal{F}$ with $P(A) =0$, and the function $g : \Omega \times [t_0,T] \times R^d \times R^{d\times n} \to R^d$ satisfies the following conditions :
\begin{enumerate}[(i)]
\item For each $(y,z) \in R^d \times R^{d\times n}, g(\cdot, y, z)$ is $(\mathcal{F}_t)_{t_0\leq t \leq T}$--progressively measurable and $\|\int^T_{t_0} |g(s,0,0)| ds \|_2 < +\infty$.
\item There exists a nonnegative constant $C$ such that $|g(\omega,t,y,z)- g(\omega, t,y',z')| \\
\leq C (|y-y'| + |z-z'|)$ for all $(\omega,t) \in \Omega \times [t_0,T]$ and all $(y,z), (y',z') \in R^d \times R^{d\times n}$.
\end{enumerate}
Then the following hold:
\begin{enumerate}[(1)]
\item For each $\xi \in L^2_{\mathcal{F}_{t_0}}(\mathcal{F}_T, R^d)$, (\ref{7.1})~ has a unique solution $(Y,Z) ~:=~ (Y_t,Z_t \\
    )_{t_0 \leq t  \leq T} \in S^2_{\mathcal{F}_{t_0}} \times H^2_{\mathcal{F}_{t_0}}$.
\item For each $\xi \in L^2(\mathcal{F}_T, R^d)$, (\ref{7.1}) has a unique solution $(Y,Z) := (Y_t,Z_t)_{t_0 \leq t \leq T} \\
    \in S^2 \times H^2$.
\item Let $\xi$ and $(Y,Z)$ be the same as in (1), denote $Y_{t_0}$ by $\mathcal{E}^g[\xi~|~\mathcal{F}_{t_0}]$. Then, when $E[\int^T_{t_0} | g(\omega,t,0,0)|^2 dt] <+\infty$, we have the following estimate:
    \begin{equation}\label{7.2}
      |\mathcal{E}^g[\xi_1~|~\mathcal{F}_{t_0}]- \mathcal{E}^g[\xi_2~|~\mathcal{F}_{t_0}]| \leq C_1 E[|\xi_1 - \xi_2|^2~|~\mathcal{F}_{t_0}]^{1/2},~ \forall~ \xi_1,\xi_2 \in L^2_{\mathcal{F}_{t_0}}(\mathcal{F}_T, R^d),
    \end{equation}
where $C_1 := e^{8(1+C^2)(T-t_0)}$.
\item Let $\mathcal{E}^g[\cdot~|~\mathcal{F}_{t_0}] : L^2_{\mathcal{F}_{t_0}}(\mathcal{F}_T, R^d) \to L^0(\mathcal{F}_{t_0})$ be the same as in (3), then $\mathcal{E}^g[\cdot~|~\mathcal{F}_{t_0}]$ is stable, namely for any countable partition $\{ A_n, n \in N \}$ of $\Omega$ to $\mathcal{F}_{t_0}$ and sequence $\{ \xi_n, n \in N \}$ in $L^2_{\mathcal{F}_{t_0}}(\mathcal{F}_T, R^d)$, one has
    \begin{equation}\label{7.3}
      \mathcal{E}^g[\sum^\infty_{n=1} \tilde{I}_{A_n} \cdot \xi_n~|~\mathcal{F}_{t_0}]=\sum^\infty_{n=1} \tilde{I}_{A_n} \cdot \mathcal{E}^g[\xi_n~|~\mathcal{F}_{t_0}].
    \end{equation}
\end{enumerate}
\end{example}
\begin{proof}
(1). By (i), for each $(y,z) \in R^d \times R^{d \times n}$, $g(\cdot, \cdot, y,z) : \Omega \times [t_0,T] \to R^d$ is $(\mathcal{F}_t)_{t_0\leq t \leq T}$--progressively measurable, and by (ii), for each $(\omega,t) \in \Omega \times [t_0,T]$, $g(\omega,t,\cdot,\cdot) : R^d \times R^{d \times n} \to R^d$ is continuous. Since for each $(Y,Z) \in S^2_{\mathcal{F}_{t_0}} \times H^2_{\mathcal{F}_{t_0}}$, both $Y(\cdot,\cdot) : \Omega \times [t_0,T] \to R^d$ and $Z(\cdot,\cdot) : \Omega \times [t_0,T] \to R^{d\times n}$ are $(\mathcal{F}_t)_{t_0\leq t \leq T}$--progressively measurable, then the process $g(\cdot,\cdot,Y(\cdot,\cdot),Z(\cdot,\cdot))$ is, of course,$(\mathcal{F}_t)_{t_0\leq t \leq T}$--progressively measurable. Further, since  $\|\int^T_{t_0} |g(t,0,0)| dt \|_2 \\
< +\infty$, it is, of course, that  $|||\int^T_{t_0} |g(t,0,0)| dt |||_2 := E[\int^T_{t_0} |g(t,0,0)| dt~|~\mathcal{F}_{t_0}]^{1/2} <+\infty$(a.s.). Again let $M_t = \int^t_{t_0} (-Z_s) dW_s$ for any $t \in [t_0,T]$ and $Z \in  H^2_{\mathcal{F}_{t_0}}$, then $M \in M^2_{\mathcal{F}_{t_0},0}$ and it and $Z$ uniquely determine each other. If we denote $Z$ by $Z^M$, then $BSDE$(\ref{7.1}) becomes
\begin{equation}\label{7.4}
  Y_t = \xi + \int^T_t g(s,Y_s,Z^M_s)ds + M_T -M_t ~for ~any~ t \in [t_0,T]
\end{equation}
\par
If we consider the driver $f : \Omega \times [t_0,T] \times  L^2_{\mathcal{F}_{t_0}}(\mathcal{F}_T, R^d) \times  L^2_{\mathcal{F}_{t_0}}(\mathcal{F}_T, R^{d \times n}) \to R^d$, defined by $f (\omega,t,r,\eta) = g(\omega,t,r(\omega),\eta(\omega))$ for all $(\omega,t,r,\eta) \in \Omega \times [t_0,T] \times  L^2_{\mathcal{F}_{t_0}}(\mathcal{F}_T, R^d) \times  L^2_{\mathcal{F}_{t_0}}(\mathcal{F}_T, R^{d \times n})$, then $BSDE$(\ref{7.4}) becomes a special case of Corollary \ref{corollary5.9} where $f$ is independent of $U^M$. Thus $BSDE$(\ref{7.4}) has a unique solution  $(Y,M) \in S^2_{\mathcal{F}_{t_0}} \times M^2_{\mathcal{F}_{t_0},0}$, namely $BSDE$(\ref{7.1}) has a unique solution  $(Y,Z) \in S^2_{\mathcal{F}_{t_0}} \times H^2_{\mathcal{F}_{t_0}}$, at which time $Y$ is obviously continuous.\\
(2). Similar to the proof of (1) above, one can see that when $\xi \in L^2(\mathcal{F}_T, R^d)$, $BSDE$(\ref{7.1}) can be regarded as a special case of Corollary 3.9 of \cite{CN17}.\\
(3). It is similar to the proof of Theorem 3.2 of \cite{Peng04}.\\
(4). Since the generator corresponding to $BSDE$(\ref{7.1}) is $F : S^2_{\mathcal{F}_{t_0}} \times M^2_{\mathcal{F}_{t_0},0} \to S^2_{\mathcal{F}_{t_0},0}$ given by $F_t(Y,M) = \int^t_{t_0} g(s,Y_s,Z^M_s)ds$ when we consider the terminal condition $\xi \in L^2_{\mathcal{F}_{t_0}}(\mathcal{F}_T, R^{d})$, where $Z^M$ is the same as in the proof of (1), then it is obvious that the $BSE: Y_t + F_t(Y,M) + M_t = \xi + F_T(Y,M) + M_T$, $\forall t \in [t_0,T]$, which corresponds to $BSDE$(\ref{7.1}), is stable and regular. By (1), $BSDE$(\ref{7.1}) has a unique solution $(Y^{(n)},Z^{(n)}) \in S^2_{\mathcal{F}_{t_0}} \times H^2_{\mathcal{F}_{t_0}}$ for each $\xi_n \in L^2_{\mathcal{F}_{t_0}}(\mathcal{F}_T, R^{d})$, and $BSDE$(\ref{7.1}) has a unique solution $(Y,Z)$ for $\xi := \sum^\infty_{n=1} \tilde{I}_{A_n} \cdot \xi_n$. Further, it is very easy to see that $(\sum^\infty_{n=1} \tilde{I}_{A_n} \cdot Y^{(n)}, \sum^\infty_{n=1} \tilde{I}_{A_n} \cdot Z^{(n)})$ is also the unique solution to $BSDE$(\ref{7.1}) for the terminal condition $\xi$, so $Y_t = \sum^\infty_{n=1} \tilde{I}_{A_n} \cdot Y_t^{(n)}$ for each $t \in [t_0,T]$, in particular, $Y_{t_0} = \sum^\infty_{n=1} \tilde{I}_{A_n} \cdot Y_{t_0}^{(n)}$, namely $\mathcal{E}^g[\xi~|~\mathcal{F}_{t_0}] =\sum^\infty_{n=1} \tilde{I}_{A_n} \cdot \mathcal{E}^g[\xi_n~|~\mathcal{F}_{t_0}].$
\end{proof}
\par
To link Example \ref{example7.6} with the work of \cite{GZZ14}, we give the following final remark:
\begin{remark}\label{remark7.7}
In Example \ref{example7.6}, let $d=1$ and $g$ be independent of $y$ such that $g(\omega,t,\cdot) : R^n \to R$ is a convex function and $g(\omega,t,0) = 0$ for each $(\omega,t) \in \Omega \times [t_0,T]$. Define $\rho^g_{t_0}(\cdot) : L^2(\mathcal{F}_T,R^1) \to L^2(\mathcal{F}_{t_0},R^1)$ by $\rho^g_{t_0}(\xi) = \mathcal{E}^g[-\xi~|~\mathcal{F}_{t_0}]$ for each $\xi \in L^2(\mathcal{F}_T,R^1)$, where $\mathcal{E}^g$ is defined in (3) of Example \ref{example7.6}, then $\rho_{t_0}$ is a conditional convex risk measure and satisfies the conditional Lipschitzian property : $|\rho^g_{t_0}(\xi_1) - \rho^g_{t_0}(\xi_2)| \leq C_1 \cdot E[|\xi_1 -\xi_2|^2~|~ \mathcal{F}_{t_0}]^{1/2}$ for all $\xi_1$ and $\xi_2 \in  L^2(\mathcal{F}_T,R^1)$. Since $L^2(\mathcal{F}_T,R^1)$, as a subset of $L^2_{\mathcal{F}_{t_0}}(\mathcal{F}_T,R^1)$, is dense in the $RN$ module $(L^2_{\mathcal{F}_{t_0}}(\mathcal{F}_T,R^1),|||\cdot|||_2)$ with the $(\varepsilon,\lambda)$--topology, it is just the conditional Lipschitzian property that is used in \cite{GZZ14} to obtain the existence of the unique extension $\bar{\rho}^g_{t_0}$ of $\rho^g_{t_0}$ onto $L^2_{\mathcal{F}_{t_0}}(\mathcal{F}_T,R^1)$. In fact, $\bar{\rho}^g_{t_0}(\xi) = \mathcal{E}^g[-\xi~|~\mathcal{F}_{t_0}]$ for each $\xi \in L^2_{\mathcal{F}_{t_0}}(\mathcal{F}_T,R^1)$ since both $\bar{\rho}^g_{t_0}$ and $\mathcal{E}^g$ are stable and $\rho^g_{t_0}(\xi) = \mathcal{E}^g[-\xi~|~\mathcal{F}_{t_0}]$ for each $\xi \in L^2(\mathcal{F}_T,R^1)$. Thus the study of $BSE$(\ref{1.3}) can make us directly obtain $\bar{\rho}^g_{t_0}$ !
\end{remark}



\end{document}